\pdfoutput=1
\documentclass[11pt,reqno]{amsart}

\usepackage[T1]{fontenc}
\usepackage[utf8]{inputenc}
\usepackage[a4paper,margin=2.5cm]{geometry}
\usepackage{amssymb}
\usepackage{mathtools}
\allowdisplaybreaks

\usepackage{graphicx}
\usepackage{xcolor}
\usepackage{enumitem}
\usepackage{float}

\usepackage{multirow}
\usepackage{booktabs}
\usepackage{adjustbox}

\usepackage{hyperref}

\def\doi#1{\href{https://doi.org/\detokenize{#1}}{\url{https://doi.org/#1}}}

\usepackage[ruled,vlined,linesnumbered]{algorithm2e}
\SetEndCharOfAlgoLine{}
\DecMargin{2.2mm}
\SetKwInput{Input}{Input}\SetKwInput{Output}{Output}
\SetKwFor{While}{While}{do}{}
\SetKwFor{ForEach}{For each}{do}{}
\SetKwIF{If}{ElseIf}{Else}{If}{then}{else if}{else}{endif}
\SetKw{Return}{Return}
\SetKw{KwRet}{return}
\SetKw{Recurse}{Recurse}
\SetKwFor{For}{For}{do}{}
\SetKw{Break}{break}
\SetKw{KwAnd}{and}
\SetKwFunction{Simon}{IsotropicVector}
\SetKwFunction{Verify}{Verify}
\SetKwFunction{LLL}{LLL}
\SetKwFunction{Enumerate}{Enumerate}
\SetKwFunction{ShortestVector}{ShortestVector}
\SetKwFunction{Cornacchia}{Cornacchia}

\theoremstyle{plain}
\newtheorem{theorem}{Theorem}[section]
\newtheorem{lemma}[theorem]{Lemma}
\newtheorem{proposition}[theorem]{Proposition}
\newtheorem{corollary}[theorem]{Corollary}
\theoremstyle{definition}
\newtheorem{definition}[theorem]{Definition}
\theoremstyle{remark}
\newtheorem{remark}[theorem]{Remark}

\newcommand{\A}{\mathcal{A}}

\newcommand{\C}{\mathcal{C}}
\newcommand{\Q}{\mathbb{Q}}
\newcommand{\F}{\mathbb{F}}
\newcommand{\Z}{\mathbb{Z}}

\newcommand{\Bp}{B_p}
\newcommand{\mi}{\mathbf{i}}
\newcommand{\mj}{\mathbf{j}}

\newcommand{\Ord}{\mathcal{O}}
\newcommand{\rhs}{\mathrm{rhs}}
\newcommand{\theHmat}{\begin{psmallmatrix} u & \alpha \\ \bar\alpha & v \end{psmallmatrix}}

\newcommand{\legendre}[2]{\ensuremath{\left( \frac{#1}{#2} \right) }}

\newcommand{\lin}[1]{line~\ref{#1}}
\newcommand{\Lin}[1]{Line~\ref{#1}}
\newcommand{\lins}[2]{lines~\ref{#1}--\ref{#2}}
\newcommand{\Lins}[2]{Lines~\ref{#1}--\ref{#2}}

\DeclareMathOperator{\GL}{GL}

\DeclareMathOperator{\Gen}{Gen}

\DeclareMathOperator{\cl}{cl}

\DeclareMathOperator{\tr}{tr}
\DeclareMathOperator{\Div}{Div}
\DeclareMathOperator{\Pic}{Pic}
\DeclareMathOperator{\Aut}{Aut}
\DeclareMathOperator{\NS}{NS}
\DeclareMathOperator{\rank}{rank}
\DeclareMathOperator{\Hom}{Hom}
\DeclareMathOperator{\End}{End}
\DeclareMathOperator{\charec}{char}

\DeclareMathOperator{\disc}{disc}

\newcommand{\curlyO}{\mathcal{O}}

\DeclareMathOperator{\nrd}{nrd}

\begin{document}

\title{Detecting split surfaces, RM surfaces, and minimum walks on isogeny graphs}

\author{Eda K\i{}r\i{}ml\i{}}
\address{University of Birmingham, UK}
\email{e.kirimli@bham.ac.uk}

\author{Gaurish Korpal}
\address{University of Auckland, New Zealand}
\email{gaurish.korpal@auckland.ac.nz}

\keywords{isogenies, abelian surfaces, superspecial, refined Humbert invariants, splitting}

\begin{abstract}
We study the detectability of split surfaces in isogeny graphs of principally polarized superspecial abelian surfaces, a question relevant to the security analysis of dimension-$2$ isogeny-based cryptography. Our approach uses refined Humbert invariants to replace explicit isogeny computations with primitive representation problems for positive definite quadratic forms in five variables. We develop algorithms to detect $(N,N)$-splittings and to compute the minimum $(N,N)$-splitting level of a superspecial Jacobian without constructing the corresponding isogeny path. The same framework detects embeddings of real multiplication (RM) orders through primitive representations of discriminants of real quadratic orders. For RM, we use exhaustive small-prime data together with $100{,}000$ random polarizations per prime for $227\leq p\leq1619$, testing primitive representations of square-free discriminants $D\leq100$; the first nontrivial primitively represented discriminant is typically small. 
We apply these methods experimentally in two regimes. For $11\leq p\leq251$, where the irreducible principal polarizations are known exhaustively, the automorphism data recovered from the refined Humbert invariant reproduces the counts of Ibukiyama--Katsura--Oort for the irreducible polarizations. For large parameters, we reach primes of $1000$ bits, where the splitting degrees produced satisfy $\log_2N\approx\log_2p$ with a distribution whose shape is stable across the whole range, so the detected $(N,N)$-isogeny has degree about $p^{2}$; the smallest observed largest prime-power divisor of $N$ remains small up to $250$ bits and increases sharply from $300$ bits onward.
\end{abstract}

\maketitle

\section{Introduction}

Isogeny-based cryptography is an active area of post-quantum cryptography, concerned with variants of the isogeny problem, which are believed to be computationally hard. Following the attacks on SIDH \cite{sidhattack,sidhattack2,sidhattack3}, increasing attention has been directed toward higher-dimensional settings, particularly principally polarized superspecial abelian surfaces. A better understanding of the structure and expansion properties of the isogeny graphs of principally polarized superspecial abelian surfaces is important for assessing the hardness of the general dimension-$2$ isogeny problem.

In this work, we focus on the splitting behavior of principally polarized superspecial abelian surfaces, which plays an important role in the security analysis of several constructions and is also a natural geometric problem in its own right. In particular, contrary to the increase in the number of cryptographic primitives based on principally polarized superspecial abelian surfaces, the structure of the associated isogeny graphs and the security assumptions derived from them remain only partially understood.

We call a \emph{product surface}\footnote{We follow Kani's terminology here.} shortly the product of two elliptic curves, and call the product surface with product polarization \emph{a split surface}\footnote{ This is mostly referred to as a principally polarized superspecial split surface in the cryptography papers.}. A curve $\C$ of genus $2$  is said to be \emph{splitting} if its Jacobian is $K$-isogenous to a principally polarized abelian surface $E_1\times E_2$ where $E_i/K$ for $i=1,2$ are supersingular elliptic curves over a finite field $K$.

The main focus of this work is the detection of split surfaces in the isogeny graph of principally polarized superspecial abelian surfaces and the embeddings of RM orders in the endomorphism rings. The splitting behavior of isogeny graphs is important to understand the difficulty of isogeny problems in dimension~$2$. Recently, isogeny graphs of RM surfaces gained attention due to their Ramanujan properties shown by \cite{CharlesGorenLauter2009FamiliesRamanujan}.
We give an algorithm to detect split surfaces encountered at random in the isogeny graph and use it to study shortest paths from Jacobians to split surfaces. Moreover, our methods allow us to show all possible embeddings of RM orders when only the principal polarization is provided as input.

We approach these problems using the arithmetically rich invariants of principally polarized abelian surfaces known as \emph{refined Humbert invariants}, introduced by Kani \cite{Kani94} in 1994. The main advantage is that they enable the relevant computations without explicitly evaluating costly chains of dimension-$2$ isogenies. 

Of particular relevance is the Costello--Smith algorithm~\cite{CraigBen} for the superspecial isogeny problem, whose first phase walks from a Jacobian to a split surface and therefore depends directly on how split surfaces are distributed; we describe it in Section~\ref{applications}.

While it is well known that split surfaces form only a small proportion of the full graph, comparatively little is understood about their distribution within the graph. Our results provide further insight into the security and performance of recent cryptographic constructions, while also contributing to a broader understanding of superspecial isogeny graphs in dimension~$2$.

Let $\A$ be an abelian surface over $\overline{\F}_{p}$ carrying a principal polarization $\theta$. The intersection product on the N\'eron--Severi group $\NS(\A)$ induces a positive definite integral quadratic form $q_{(\A,\theta)}$ on the quotient $\NS(\A)/\Z\theta$, called the \emph{refined Humbert invariant} of $(\A,\theta)$. Computing refined Humbert invariants is in general difficult, and has been studied extensively by Kani~\cite{KaniMJ,KaniESCII,KaniESCI,SubcoversGenus2}; when $\A\cong E_1\times E_2$ for elliptic curves $E_1,E_2$ it becomes tractable, which is the case we exploit throughout. The construction is recalled in full in Appendix~\ref{sec:RHI}; see also \cite{Kani94,KaniESCI}.

Several previous works have used refined Humbert invariants, either directly or through the associated Humbert surfaces, in the study of isogeny-based cryptography.
The computational problem of determining a refined Humbert invariant is itself studied in \cite{kirimlimartindale}, where it is shown to be polynomially equivalent, under GRH, to the computational isogeny problem of elliptic curves. The works \cite{Splitting,detectionofendomorphisms} instead use the algebraic Humbert surfaces $\mathcal{H}_{N^2}$ to study splitting and endomorphism problems. Their methods, however, rely on explicitly defining equations for $\mathcal{H}_{N^2}$ when $ \mathcal{H}_{N^2} $ is birational, and are restricted to $N\leq 11$. By working directly with refined Humbert invariants, we avoid the need for explicit equations parameterizing Humbert surfaces.

Our use of refined Humbert invariants rests on three properties, set out in Section~\ref{applications}: they decide whether a principal polarization is irreducible, they detect $(N,N)$-splittings, and they detect embeddings of real quadratic orders. In this way, we contribute to the understanding of superspecial isogeny graphs by considering problems such as their splitting behavior and the distribution of RM superspecial surfaces.

In this work, we study principally polarized superspecial abelian surfaces through their quintic refined Humbert invariants. For splitting problems, we fix a superspecial product surface $\A=E\times E$, choose a random principal polarization $\theta$, and compute $q_{(\A,\theta)}$. After testing irreducibility by Proposition~\ref{irreducibilitycriteria}, we apply Theorem~\ref{SplittingTheorem} to detect $(N, N)$-splittings through primitive representations of $N^2$. This framework is then used to study the distribution of split surfaces and shortest paths from Jacobians to split surfaces, with particular emphasis on small primes $N=\ell$, and composite $N$.
As another use, refined Humbert invariants also detect real multiplication (RM): a primitive representation of a fundamental discriminant $D$ by $q_{(\A,\theta)}$ yields an embedding of the corresponding RM order $\mathcal{O}_D$ into the endomorphism ring $\End(\A)$.

We study two complementary primitive-representation problems associated with quintic refined Humbert invariants. First, for a prescribed prime $\ell$, we determine the least exponent $k$ for which $q_{(\A,\theta)}$ primitively represents $\ell^{2k}$. By Kani's splitting theorem (Theorem~\ref{SplittingTheorem}), this detects the minimum $(\ell^k,\ell^k)$-splitting, and gives arithmetic information about the distance to split surfaces. Second, without prescribing the splitting degree, we reduce the search for a primitive square representation $q_{(\A,\theta)}(x)=N^2$ to finding an isotropic vector of an associated indefinite  quadratic form of rank six. This yields an effective method for producing a composite degree $N$, hence an existence of a split $(N, N)$-isogeny.

We investigate these methods experimentally in two regimes. For small characteristic, where the principal polarizations are known exhaustively, we study the distribution of minimum splitting levels for $\ell=2,3$, together with RM and automorphism data. For larger parameters, we compute splitting levels for a $50$-bit prime and apply the unrestricted square-witness method to primes up to $1000$ bits, examining both the size and the arithmetic structure of the resulting splitting degrees. 

\paragraph{Our contributions.}
\begin{itemize}[leftmargin=1.4em,itemsep=2pt,topsep=2pt]
\item We show that splitting detection and real-multiplication detection for principally polarized
superspecial abelian surfaces reduce to \emph{primitive-representation problems} for a
positive definite quadratic form in five variables, the quintic refined Humbert invariant. The
reduction takes the polarization data itself as input, so no Igusa invariants, no defining
equations for a genus $2$ curve and no explicit chain of isogenies are required at any stage.
\item We realize this reduction in three algorithms: Section~\ref{designing} gives a sampler for principal
polarizations that works far beyond the range of exhaustive enumeration
(Algorithm~\ref{alg:polzrandom}), a lattice-enumeration procedure for the least splitting level
$k_0$ at a prescribed $\ell$ (Algorithm~\ref{alg:kzero}), and a reduction of the unrestricted
problem ``find some splitting degree $N$'' to finding an isotropic vector of an indefinite rank-six
form, solved with Simon's algorithm (Algorithm~\ref{alg:witness}).
\item We relate $k_0$ precisely to distance in the Richelot graph (Lemma~\ref{lem:k0-distance}): it is
the length of the shortest walk to the split locus made of good extensions, and an upper bound for
the graph distance.
\item We apply these methods experimentally in two regimes: exhaustively for small $p$, where all
principal polarizations are known, and up to $1000$-bit $p$ for the unrestricted problem. We use
the resulting data to compare automorphism counts against Ibukiyama--Katsura--Oort and to
estimate how often real multiplication by a prescribed order occurs.
\end{itemize}

\paragraph{Organization} Section~\ref{sec:prelim} recalls isogeny graphs of abelian surfaces and
refined Humbert invariants; the full construction of the latter is deferred to Appendix~\ref{sec:RHI}.
Section~\ref{designing} develops the three algorithms. Section~\ref{applications} applies them to
splitting and to real multiplication, and Section~\ref{data} presents the experimental data.
Section~\ref{future} lists future directions.

We implemented all the algorithms from Section~\ref{designing} using a Julia package called \texttt{OSCAR}~\cite{OSCAR,OSCAR-book}, especially using its dependency package \texttt{Nemo/Hecke}~\cite{nemo}.
The experiments were run under Julia~1.12.7 with \texttt{OSCAR}~1.8.2. The enumeration and sampling of polarizations, the refined Humbert invariants at large $p$, and Algorithm~\ref{alg:kzero} were run on an HPC cluster; the remaining computations were run on a workstation with a $24$-core Intel i7-12800HX and $16$\,GB of memory. Wall-clock timings for the individual runs are recorded in the output files of the repository; we make no performance claims from them. The code and data are provided as part of the supplemental material and are available at:

\begin{center}
\url{https://github.com/gkorpal/humbert-split}
\end{center}

\section{Preliminaries}\label{sec:prelim}
 In this section, we introduce the terminology and recall several results from the literature concerning isogenies of superspecial principally polarized abelian surfaces. In particular, we review the theory of refined Humbert invariants, real multiplications, isogeny graphs of abelian surfaces, and some basic facts about primitive representations by integral quadratic forms.

\subsection{Superspecial abelian varieties}
An elliptic curve is an abelian variety of dimension $1$. An isogeny between elliptic curves is a surjective homomorphism with finite kernel. An isogeny from an elliptic curve to itself is called an endomorphism, and the set of endomorphisms of $E$ forms the ring $\End(E)$. If two elliptic curves $E_1$,$E_2$ are isogenous, written $E_1 \sim E_2$, then $\End^{0}(E_1)\simeq \End^{0}(E_2)$.
An elliptic curve $E$ defined over $\overline{\F}_{p}$ is said to be \emph{supersingular} if the endomorphism algebra, $\End^0_{\overline{\F}_p}(E)$, is isomorphic to a definite quaternion algebra  $\Bp$ over $\Q$ ramified at $p$ and $\infty$. Here, $\Bp = \Q + \Q\mi + \Q\mj + \Q\mi\mj$ for $\mi^2=a$, $\mj^2=b$, and $\mi\mj = -\mj\mi$. In particular, for $p\equiv 11\pmod {12}$ we can consider $(a,b)=(-1,-p)$~\cite[Proposition 1]{EHLMP}.

An abelian variety over $\overline{\F}_{p}$  is said to be \emph{supersingular} if it is isogenous to a product of supersingular elliptic curves over $\overline{\F}_p$ \cite{Oort1974}, and it is said to be \emph{superspecial} if it is isomorphic to a product of supersingular elliptic curves over $\overline{\F}_{p}$  (as an unpolarized abelian variety). A curve $\C$ is called \emph{supersingular} (respectively, \emph{superspecial}) if its Jacobian $\A = \mathcal{J(C)}$ is supersingular (respectively, \emph{superspecial}). 

\begin{theorem}\label{unpolsuperspecial}(Deligne, Ogus, Shioda) If $\A/\overline{\F}_p$ is a superspecial abelian
variety with $\dim \A = g > 1$, then $\A\simeq  E^g$ for any supersingular elliptic curve $E$.
\end{theorem}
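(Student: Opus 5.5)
The plan is to translate the statement into one about projective modules over a maximal order in $\Bp$, and then use Eichler's cancellation theorem.

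\emph{Setting up the equivalence.} Fix a supersingular elliptic curve $E$ over $\overline{\F}_p$ and put $\Ord=\End(E)$, a maximal order in $\Bp$. For any supersingular $E'$, the group $\Hom(E,E')$ is a right $\Ord$-module via precomposition. It is locally free of rank one, and it is isomorphic to a left ideal class of $\Ord$ (viewed as a right module in the appropriate way). The functor $E'\mapsto\Hom(E,E')$ extends additively to products. It gives an equivalence between the category of superspecial abelian varieties (products of supersingular curves, with homomorphisms) and the category of finitely generated projective right $\Ord$-modules; I would cite Deuring's correspondence for this. The two facts to use are:
\begin{itemize}
\item $\Hom(E,E_1)\otimes_{\Ord}\Hom(E_1,E)\to\Ord$ is injective with finite cokernel.
\item Full faithfulness holds because $\Hom(E_1,E_2)\cong\Hom_{\Ord}(\Hom(E,E_1),\Hom(E,E_2))$. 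This can be checked after tensoring with $\Z_\ell$ for every $\ell$, using the Tate module for $\ell\neq p$ and Dieudonné modules at $\ell=p$.
\end{itemize}
Under this functor, $\A\simeq E_1\times\dots\times E_g$ goes to $M=I_1\oplus\dots\oplus I_g$, and $E^g$ goes to $\Ord^g$. It therefore suffices to show $M\cong\Ord^g$ whenever $g\ge2$.

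\emph{Comparing with the free module.} Every locally free module is locally isomorphic to $\Ord^g$, since each $\Ord_\ell$ is a local or maximal order and so all its locally free modules of a given rank are free. Hence $M$ lies in the genus of $\Ord^g$. Its isomorphism class is then a double coset in
\[
\GL_g(B_p)\backslash\GL_g(\widehat B_p)/\GL_g(\widehat\Ord).
\]
Because $g\ge2$, the algebra $M_g(\Bp)$ satisfies the Eichler condition: $M_g(\Bp)\otimes\mathbb{R}=M_g(\mathbb{H})$ is not a division algebra, which is exactly what fails for $g=1$. Eichler's theorem, via strong approximation for the reduced-norm-one group $SL_g(\Bp)$, then says the double coset is determined by its reduced norm. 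This reduced norm lies in
\[
\Q^{\times}_{>0}\backslash\widehat{\Q}^{\times}/\widehat{\Z}^{\times}.
\]
The positivity comes from the reduced norm on a definite quaternion algebra taking only positive values at $\infty$, and this group is the narrow class group of $\Q$. That class group is trivial, so every locally free module of rank $g\ge2$ is free. Hence $M\cong\Ord^g$, and therefore $\A\simeq E^g$. Since $E$ was an arbitrary supersingular curve, this proves the statement for any choice of $E$.

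\emph{Main obstacle and alternative.} The main obstacle is justifying the cancellation step, that is, verifying the hypotheses of Eichler's strong approximation for $SL_g$ over $\Bp$. If I wanted to avoid the general theory, I would first prove the case $g=2$ directly: show $I_1\oplus I_2\cong\Ord\oplus I_1I_2'$ by an explicit local-global argument with a suitably chosen element of $I_1$. Then I would show that the product ideal class is principal using the norm computation. An induction on $g$ then finishes, by peeling off one $\Ord$ summand at a time. The functorial equivalence, in particular full faithfulness at $p$, is the other technical point, but it is classical and I would cite it rather than reprove it.
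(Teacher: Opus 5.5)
Your argument is correct and is essentially Deligne's proof as given in Shioda's Theorem~3.5, one of the references the paper cites in place of a proof of its own. The route is the same: transport $\A\simeq E_1\times\cdots\times E_g$ to $I_1\oplus\cdots\oplus I_g$ via the fully faithful functor $\Hom(E,-)$, then conclude with Eichler's theorem for the maximal order $M_g(\Ord)$ (strong approximation for $SL_g$ over $\Bp$, valid since $g\geq 2$) together with the triviality of the narrow class group of $\Q$.

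Two small points. Only full faithfulness is needed, so your first bullet is superfluous; the pairing there also lands in $\End(E_1)$, not $\Ord$. Your $g=2$ fallback should not try to make an ideal class principal: $\Ord\oplus J\cong\Ord^{2}$ holds for non-principal $J$ as well, and that cancellation is exactly what Eichler's theorem supplies.
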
 
For a proof, see \cite[Theorem 6.2]{Ogus},\cite[Theorem 3.5]{Shioda}, and \cite[Section 1.6]{OortLiModularSpacesofSuperSingular}. 

For $g=1$, there are many superspecial abelian varieties, namely supersingular elliptic curves, each with a principal polarization. In contrast, for $g>1$, a single superspecial abelian variety may admit many principal polarizations.
The map in~\eqref{mainisomorphism} (or using Corollary 2.9 of \cite{ibukiyama1986supersingular}) induces a bijection between principal polarizations  $\theta\in \mathcal{P}(E_1\times E_2)$ and positive definite quaternion unimodular hermitian matrices
\begin{align}\label{eq:sspolar}
    \mathcal{P}(E_1\times E_2) =  \left\{\begin{pmatrix}
        u & \alpha \\ \overline{\alpha} & v
    \end{pmatrix} :u,v\in \Z_{>0}, \ \alpha\in \curlyO,\  uv-\alpha\overline{\alpha}=1\right\}
\end{align}
where $\curlyO$ is a maximal order of $\Bp$.
Throughout, we write $\mathbf{h}=\mathbf{h}(p)$ for the number of supersingular elliptic curves over
$\overline{\F}_p$ up to isomorphism and $\mathbf{H}=\mathbf{H}(p)$ for the number of principally
polarized superspecial abelian surfaces $(\A,\theta)$ up to isomorphism; exactly
$\frac{\mathbf{h}(\mathbf{h}+1)}{2}$ of the latter carry a reducible polarization, so that
$\mathbf{H}-\frac{\mathbf{h}(\mathbf{h}+1)}{2}$ are Jacobians with irreducible polarization \footnote{Closed forms for $\mathbf{h}$ and $\mathbf{H}$ are recalled in \eqref{eq:hcount} and \eqref{eq:Hcount}
of Appendix~\ref{app:iko}.}. For an
integral quadratic form $q$, we write $\Gen{q}$ for its genus, that is, the set of classes locally
equivalent to $q$ at every place. We refer to \cite{edagaurish} for these quantities and for their
computation.
Assume that $\charec(K)=p\geq 3$. By Igusa \cite{Igusa}, every genus $2$ curve $\C$ admits a unique double cover $\C\to\mathbf{P}^1$, whose Galois group is generated by the hyperelliptic involution $\iota$. The quotient
$\mathrm{RA}(\C):=\Aut(\C)/\langle\iota\rangle$ is called the \emph{reduced automorphism group} of $\C$. 

Following \cite{ibukiyama1986supersingular}, for each reduced automorphism group, we now state how many supersingular genus $2$ curves are Jacobians of the product of two supersingular elliptic curves (up to isomorphism). 

\begin{theorem}\emph{\cite[Theorem 3.3]{ibukiyama1986supersingular}}
\label{Ibuyikama-auto-count}
For a prime $p$, let $a_G(p)$ denote the number of isomorphism classes of genus $2$ curves $\C$ such that $\mathcal{J(C)}$ is isomorphic to a product of two supersingular elliptic curves and $\mathrm{RA}(\C)\cong G$. If $p\geq 7$, then $G\in\{1,\,C_2,\,C_2\times C_2,\,S_3,\,D_6,\,S_4,\,C_5\}$, and the explicit values of $a_G(p)$ are reproduced in Appendix~\ref{app:iko}. For $p=5$, there is a unique such curve, with reduced automorphism group $\mathrm{PGL}(2,5)$, while $a_G(p)=0$ for all $G$ when $p=2,3$.
\end{theorem}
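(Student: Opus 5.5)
We would follow Ibukiyama--Katsura--Oort's route: principal polarizations on a superspecial surface become quaternion hermitian forms, and counting curves by reduced automorphism group becomes a weighted class-number computation for these forms.

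\emph{Step 1: reduce to irreducible polarizations on $E^2$.} By Theorem~\ref{unpolsuperspecial}, every superspecial surface is $\A\simeq E\times E$ for a fixed supersingular $E$ with $\End(E)=\curlyO$ maximal. By \eqref{eq:sspolar}, principal polarizations on $\A$ correspond to positive definite unimodular hermitian matrices $H=\theHmat$ over $\curlyO$. Isomorphism classes of principally polarized surfaces correspond to $\GL_2(\curlyO)$-orbits $H\mapsto g^{*}Hg$. The polarization is the theta divisor of a smooth genus $2$ curve $\C$ (Torelli) exactly when it is irreducible, that is, when the hermitian lattice does not split as an orthogonal sum of two rank-one unimodular lattices. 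Hence $\sum_G a_G(p)=\mathbf{H}-\frac{\mathbf{h}(\mathbf{h}+1)}{2}$. Moreover $\Aut(\C)\cong\Aut(\A,\theta)$, which is the unitary group $U(H)$ of the hermitian lattice, and $\iota$ corresponds to $-1$. Therefore $\mathrm{RA}(\C)\cong U(H)/\{\pm1\}$.

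\emph{Step 2: classify the possible groups.} A finite group acting faithfully on $\mathbf{P}^1$ preserving six branch points has to be cyclic, dihedral, $A_4$, $S_4$ or $A_5$. Combining this with Bolza/Igusa's classification of automorphism groups of genus $2$ curves in characteristic $p\ge7$ leaves the list $1,C_2,C_2\times C_2,S_3,D_6,S_4,C_5$. For $p=5$ the group $\mathrm{PGL}(2,5)$ appears, realized by $y^2=x^5-x$; for $p=2,3$ there are no such curves. These small cases we would check directly from the mass formula below.

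\emph{Step 3: mass formula and stratification.} Compute the mass $\sum_{[H]}1/|U(H)|$ over the principal genus of rank-two quaternion hermitian lattices. This is $\frac{(p-1)(p^2+1)}{2880}$, obtained via Siegel/Hashimoto--Ibukiyama local densities. For each finite subgroup $G$ of $\mathrm{PGL}_2$ in the list, count the classes $H$ with $U(H)/\pm1\supseteq G$. This is done by enumerating optimal embeddings of the corresponding finite groups, equivalently of orders such as $\Z[\zeta_n]$ and the binary polyhedral groups, into $M_2(\curlyO)$ preserving $H$. Those counts are class numbers of imaginary quadratic orders weighted by local embedding conditions, and they depend on $p\bmod 8,12,5$ through Legendre symbols. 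Möbius inversion over the subgroup lattice then gives the exact $a_G(p)$. Finally we subtract the reducible (product) contributions, whose automorphism groups are computed from $\Aut(E_1)\times\Aut(E_2)$ and swaps.

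The main obstacle is Step 3: computing, for each $G$, the number of $H$ with prescribed automorphisms. This requires careful local embedding numbers at $p$ and $2,3$, and correct bookkeeping of overlaps between strata such as $S_4\supset D_6$? (actually $S_4\supset C_2\times C_2,S_3$). Our first attempt would be to parametrize curves with $\mathrm{RA}\supseteq C_2$ as $(2,2)$-covers of pairs of supersingular curves, counted via Kani's results. We would then use Hashimoto--Ibukiyama's class number formula for the principal genus to pin down the generic stratum $G=1$ by subtraction.
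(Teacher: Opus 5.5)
\textbf{Verdict: genuine gap.} The paper gives no proof of this statement. It is quoted from \cite[Theorem~3.3]{ibukiyama1986supersingular}, and Appendix~\ref{app:iko} only reproduces the formulas and checks at $p=251$ that they sum to $\mathbf{H}-\frac{\mathbf{h}(\mathbf{h}+1)}{2}$. So your route is the one the cited source takes, and your Steps~1--2 are correct: the hermitian dictionary, Torelli with $\iota\mapsto-1$, $\mathrm{RA}(\C)\cong U(H)/\{\pm1\}$, and Igusa's list for $p\geq7$. Wild automorphisms cannot occur there, because a nontrivial $p$-element of $\mathrm{PGL}_2(\overline{\F}_p)$ fixes one point and has all other orbits of size $p>6$.

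But the explicit values of $a_G(p)$ are the substance of the theorem. Your Step~3 is only an outline, and the ingredients it states would not produce those values.

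\textbf{The mass is off by a factor of $2$.} In fact $\sum_{[H]}1/|U(H)|=\frac{(p-1)(p^2+1)}{5760}$, while $\frac{(p-1)(p^2+1)}{2880}$ is the mass weighted by $|U(H)/\{\pm1\}|$. At $p=2$ the only class is $E\times E$, whose $U(H)$ is the monomial matrices over $\curlyO^\times$, of order $2\cdot 24^2=1152$; this matches $5/5760$. With your constant, the small-prime checks you propose fail:
\begin{itemize}
\item at $p=2$ or $p=3$, a mass $1/1152$ or $1/288$ would remain after removing the product class, i.e.\ phantom Jacobians;
\item at $p=5$ the residue would be $\frac{104}{2880}-\frac1{72}=\frac1{45}$ instead of $\frac1{240}=1/|\Aut(y^2=x^5-x)|$;
\item the generic stratum would come out as $a_1\sim p^3/1440$ rather than $p^3/2880$.
\end{itemize}

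\textbf{You are embedding the wrong groups.} The groups to embed are the lifts $\Aut(\C)\subset U(H)$ of $\mathrm{RA}(\C)$: namely $C_2\times C_2$, $D_4$, $D_6$, $C_3\rtimes D_4$, $\GL_2(\F_3)$ over $C_2$, $C_2\times C_2$, $S_3$, $D_6$, $S_4$. For every $G$ containing an elliptic involution, these are not binary polyhedral groups. An elliptic involution lifts to an involution of $U(H)$; this is the identity $r_4(q_\C)=\iota(\Aut(\C))-1>0$ of Table~\ref{Tab:Autr4}. By contrast, $-1$ is the only involution in a binary polyhedral group. For example, over $S_4$ the lift is $\GL_2(\F_3)$, not the binary octahedral group: only the transpositions lift to involutions, and the double transpositions lift to elements of order $4$.

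\textbf{The largest strata are not governed by imaginary-quadratic class numbers.} A non-central involution $g$ has $\Q[g]\cong\Q\times\Q$ with centralizer $\Bp\times\Bp$ in $M_2(\Bp)$. So the classes admitting such a $g$ number about $3\mathbf{h}^2$: roughly $\frac12\mathbf{h}^2$ unordered pairs of supersingular curves times $6$ gluings along their $2$-torsion. This is the $p^2/48$ in $a_{C_2}$. That is exactly your fallback via $(2,2)$-covers, and it must actually be carried out, including removing gluings along restrictions of isomorphisms, which yield product surfaces rather than Jacobians.

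\textbf{The inversion step needs more care than ``M\"obius inversion''.} It is a triangular linear system in which each class is weighted by the number of subgroups of each lifted type in $U(H)$. The product classes must be removed first, because they fall into the same strata: a generic $E_1\times E_2$ has $U(H)=\{\pm1\}^2$, the same as the lift over $C_2$.

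As written, the proposal proves the list of groups for $p\geq7$ and, once the mass is corrected, the cases $p\leq5$. It does not prove the formulas for $a_G(p)$.
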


\subsection{Isogeny graphs of abelian varieties}\label{sec:isogenygraphs}
From an algorithmic viewpoint, an isogeny class of abelian varieties over a field $K$ is naturally encoded by an \emph{isogeny graph}. Its vertices are the $K$-isomorphism classes $[\A]$, and its edges are $K$-isomorphism classes of isogenies $\A\to\A'$ satisfying certain conditions, mostly on their degree. For a prime $\ell \nmid\charec(K)$, one considers separable isogenies of degree $\ell$; the resulting \emph{$\ell$-isogeny graph} reflects the endomorphism rings. For elliptic curves over finite fields, it is identified with imaginary quadratic orders in the ordinary case and with maximal orders in the quaternion algebra
$\Bp=\left(\frac{-1,-p}{\Q}\right)$, ramified at $p$ and $\infty$ in the supersingular case. Accordingly, ordinary $\ell$-isogeny graphs have a \emph{volcano} structure, whereas supersingular $\ell$-isogeny graphs are \emph{connected regular graphs} described by ideals in maximal quaternion orders. 

From a cryptographic perspective, isogeny graphs have become a fundamental setting for post-quantum constructions, beginning with dimension-one protocols based on random walks, group actions, and path-finding in isogeny graphs of elliptic curves \cite{CGLHash,CRS1,CRS2,CSIDH,SIDH,SIKE}. These developments naturally motivate extending isogeny-graph techniques to higher-dimensional principally polarized abelian varieties. Early work in dimension two was initiated by Charles, Goren, and Lauter~\cite{CharlesGorenLauter2009FamiliesRamanujan}, Takashima \cite{Takashima2018}, and Flynn and Ti \cite{genus2SIDH}. Generalizations of the Charles--Goren--Lauter construction to genus $2$ were proposed by Castryck, Decru, and Smith \cite{Genus2hashfunction} and studied further by Takashima \cite{Takashima2018}. The associated isogeny graphs were subsequently analyzed in detail. Building on the classification of automorphism groups of supersingular genus $2$ Jacobians by Ibukiyama, Katsura, and Oort \cite{ibukiyama1986supersingular}, the local structure of principally polarized superspecial isogeny graphs was determined in \cite{enricsmithisogenyatlas,KT20}. The random-walk behavior and mixing rates were studied experimentally in \cite{floritsmith}. Recently, the paper \cite{isogenygraph2024} introduced finite regular directed graphs of $\ell$-marked principally polarized superspecial abelian varieties of dimension $g$ and proved that their adjacency matrices have real spectra with spectral gaps independent of $p$.

In summary, isogeny graphs of elliptic curves and higher-dimensional abelian varieties over finite fields play an important role in post-quantum cryptography. Their cryptographic relevance relies on the presumed hardness of finding paths between arbitrary vertices. 

The efficiency and security of dimension-$2$ isogeny-based constructions remain largely conjectural, reflecting the limited understanding of supersingular isogeny graphs in higher dimensions. Fundamental properties such as connectedness, diameter, and expansion are not yet fully understood. In particular, rapid mixing of random walks, which underlies the security of several hash constructions and is well established for elliptic curves, remains unclear even for $g=2$.

For supersingular elliptic curves in characteristic $p$, the $\ell$-isogeny graph is connected \cite[Corollary 78]{Kohel}; equivalently, any two vertices are joined by a path of $\ell$-isogenies. One proof uses the representation of sufficiently large integers by primitive integral quaternary quadratic forms, while another relies on the one-dimensionality of the weight-$2$ Eisenstein space for $\Gamma_0(p)$ \cite[p.~223]{Mestre1986Graphes}. In higher dimensions, the connectivity of the $(\ell,\ldots,\ell)$-isogeny graph for supersingular abelian varieties was established via strong approximation for quaternion unitary groups \cite{Jordan,Zaytman,jordanzaytman}. For RM superspecial abelian varieties, connectivity is proved in \cite[Theorem 4]{CharlesGorenLauter2009FamiliesRamanujan}; these graphs are moreover Ramanujan and have few loops and multiple edges \cite[Section 3.2]{CharlesGorenLauter2009FamiliesRamanujan}, properties relevant to cryptographic applications. See also \cite{SurveyLoveGoren2025}.

\textbf{Richelot and $(\ell,\ell)$-isogeny graphs}
Let $\A$ be a principally polarized abelian surface over $K$, and let $\ell\neq\charec(K)$ be a prime. An $(\ell,\ell)$-isogeny is an isogeny $\A\to\A'$ whose kernel is a maximal isotropic subgroup of $\A[\ell]$ with respect to the $\ell$-Weil pairing.
For primes $\ell$ and $p$, the $(\ell,\ell)$-isogeny graph $\Gamma_2(\ell;p)$ is the weighted multi-graph whose vertices are isomorphism classes of principally polarized abelian surfaces in a fixed isogeny class over $\overline{\mathbb{F}}_p$, and whose edges correspond to isomorphism classes of $(\ell,\ell)$-isogenies. Since an $(\ell,\ell)$-isogeny admits a dual, the graph is undirected once edges are weighted by the number of isomorphism classes of kernels realizing them, and it is regular of weighted degree $(\ell+1)(\ell^2+1)$. 

The $(\ell,\ell)$-isogeny graph of superspecial principally polarized abelian surfaces over $\overline{\mathbb{F}}_p$ is denoted by $\Gamma_2^{SS}(\ell;p)$. When the parameters $\ell$ and $p$ are clear from the context, we call $\Gamma_2^{SS}(\ell;p)$ the \emph{superspecial graph}. When $\ell=2$, we say that $\Gamma_2^{SS}(2;p)$ is a \emph{Richelot} superspecial isogeny graph. 

Another important aspect of the structure of isogeny graphs is the behavior of \emph{random walks}. We therefore recall several results concerning random walks on isogeny graphs. The supersingular $\ell$-isogeny graph is a constant-degree Ramanujan expander graph \cite{pizer1998}; consequently, a random walk on such a graph mixes rapidly. For any starting vertex, after $ O(\log(p))$ steps, the distribution of the endpoint is $\varepsilon$-close to the uniform distribution on vertices, where the hidden constant depends only on the spectral gap (equivalently, on $\ell$) and on $\varepsilon$.
To discuss random walks on superspecial isogeny graphs in dimension $2$, we recall the relevant spectral bounds. Let $G$ be an undirected graph that is regular of weighted degree $d$, in the sense just described, and denote by $\lambda_\star(G)$ its normalized second-largest adjacency eigenvalue. Florit-Smith conjecture that, for every prime $\ell$, there exists a constant $\lambda=\lambda(2,\ell)<1$, independent of $p$, such that
$\lambda_\star\!\left(\Gamma^{SS}_2(\ell;p)\right)\leq \lambda$
for every prime $p\geq 5$. In the Richelot case $\ell=2$, they conjecture that
$
\frac{11}{15}\leq \lambda_\star\!\left(\Gamma^{SS}_2(2;p)\right)\leq \frac{12}{15}
$
for every $p\geq 41$ \cite[Conjecture~4.10]{floritsmith}. This uniform spectral gap conjecture implies rapid mixing of random walks on the corresponding superspecial Richelot isogeny graphs. 

More precisely, assuming the conjecture, Florit--Smith show that a random walk of length $n\geq 4.5m\log p+9$ approximates the stationary distribution with error at most $p^{-m}$; taking $m=4$, a walk of length $n\geq 18\log p+9$ therefore gives error at most $p^{-4}$ \cite[Theorem~6.1]{floritsmith}. We use this only as a qualitative indication that logarithmic-length random walks are expected to mix rapidly.

Another useful piece of information at hand is the \emph{diameter} of the $(\ell,\ell)$ isogeny graph of principally polarized superspecial surfaces for a specific $\ell$ and $p$. 
Florit-Smith also gave a result on the diameter of such isogeny graphs in the generic case \cite{floritsmith} and calculated them explicitly for small primes; see Table~\ref{tab:richelot_diameter_p_11mod12}. For a given fixed $p$ and $\ell$, they concluded
that the diameter of each isogeny graph $\Gamma^{SS}_g(\ell;p)$ is $O(\log(p))$, see \cite[p.12]{floritsmith}.
While searching for a number represented by a refined Humbert invariant in our algorithm, we can use the upper bound on the diameter to put an upper bound on the number represented by the quadratic form.
\begin{table}[!ht]
\centering
\footnotesize
\renewcommand{\arraystretch}{0.95}
\setlength{\tabcolsep}{3pt}
\begin{tabular}{r|rrrrrrrrrrrrrr}
\hline
$p$
& 23 & 47 & 59 & 71 & 83 & 107 & 131
& 167 & 179 & 191 & 227 & 239 & 251 & 263 \\
$d(G)$
& 3 & 4 & 5 & 5 & 6 & 6 & 7
& 7 & 7 & 7 & 7 & 8 & 8 & 7 \\
\hline
$p$
& 311 & 347 & 359 & 383 & 419 & 431 & 443
& 467 & 479 & 491 & 503 & 563 & 587 & 599 \\
$d(G)$
& 8 & 8 & 8 & 8 & 9 & 8 & 8
& 8 & 8 & 8 & 9 & 9 & 9 & 9 \\
\hline
\end{tabular}
\caption{Experimental diameters $d(G)$ of the superspecial Richelot isogeny graph
$G=\Gamma^{SS}_2(2;p)$ for primes $p\equiv 11 \pmod{12}$ (data extracted from \cite[Appendix A]{floritsmith}).}
\label{tab:richelot_diameter_p_11mod12}
\end{table}

A useful refinement is obtained by combining spectral bounds with general diameter estimates. Let $M=\#\Gamma^{SS}_g(\ell;p)$, and suppose that the absolute values of the non-trivial normalized adjacency eigenvalues are bounded above by a constant strictly smaller than $1$. The usual logarithmic estimate then gives an upper bound for the diameter in terms of $M$ and the spectral bound, while the Chebyshev-polynomial argument of Chung--Faber--Manteuffel \cite{ChungFaberManteuffel1994} gives the sharper estimate
$\operatorname{diam}(\Gamma^{SS}_g(\ell;p))\leq \left\lfloor \frac{\operatorname{arcosh}(M-1)}{\operatorname{arcosh}\left(\frac{1}{\lambda_{\max}}\right)}\right\rfloor+1$,
where $\lambda_{\max}$ denotes the corresponding upper bound for the non-trivial normalized eigenvalues. For $g=2$ we have $M=\mathbf{H}(p)$, given in closed form by \eqref{eq:Hcount}; for instance $\mathbf{H}(1103)=480508$. In general $M$ is of size $p^{\frac{g(g+1)}{2}}$, so this recovers a logarithmic diameter bound in $p$. In dimension $2$, the estimate of Oh \cite{Oh2002} gives $\lambda_{\max}\leq\left(\frac{2\sqrt{\ell}}{(\ell+1)}\right)^2$. For the Richelot isogeny graph $\Gamma^{SS}_2(2;p)$, this gives $\lambda_{\max}\leq\frac{8}{9}$, and the  estimate of Chung--Faber--Manteuffel gives
$\operatorname{diam}(\Gamma^{SS}_2(2;p))\leq4.20\log_2(p)+O(1)$.
This bound is unconditional. Assuming instead the upper bound $\lambda_\star(\Gamma^{SS}_2(2;p))\leq\frac{12}{15}=\frac45$ of \cite[Conjecture~4.10]{floritsmith} quoted above, the same estimate gives the sharper but \emph{conditional} bound
$\operatorname{diam}(\Gamma^{SS}_2(2;p))\leq3\log_2(p)+O(1)$.

\subsection{Refined Humbert invariants}
Here, we shortly give the definition of a refined Humbert invariant of principally polarized abelian surface. For the details, we refer the reader to Appendix~\ref{sec:RHI}.

Let $\A$ be an abelian surface over a field $K$.
Let $\Div(\A)$ be the set of divisors of $\A$.
If $D_1$, $D_2 \in \Div(\A)$ we say that $D_1$ is \emph{numerically equivalent} to $D_2$, denoted by $D_1 \equiv D_2$, satisfying
$(D_1\cdot D) = (D_2\cdot D)$ for all $D \in \Div(\A)$,
where $(\cdot)$ denotes the intersection number (as defined in \cite[p.357]{hartshorne2013algebraic}). We define the \emph{N\'eron-Severi group} $\NS(\A)$ of $\A$ to be $\NS(\A) = \Div(\A)/\equiv$, as defined in \cite[Corollary of Theorem V.1]{langabelianvar}.
We define \emph{the set of principal polarizations} of  $\A$ as $$ \mathcal{P}(\A)= \{D\in \NS(\A) : D\in \Div(\A) \emph{ is ample and } D\cdot D =2 \}.$$

Let $\A/K$ be an abelian surface and let $\theta \in \mathcal{P}(\A)$ be a principal polarization. We define the \emph{polarized N\'eron-Severi group} of $(\A,\theta)$ to be  $\NS(\A,\theta) :=\NS(\A)/\Z\theta$.

Kani discusses in \cite[\S 3]{Kani94} that there is a well-defined map on $\NS(\A)$, and this leads to a quadratic form $\tilde{q}_{(\A,\theta)}$ on $\NS(\A)$, given explicitly by the formula
$$\tilde{q}_{(\A,\theta)}(D) = (D \cdot \theta)^2 - 2(D \cdot D) for  D \in \NS(\A).$$
It is evident that $\tilde{q}_{(\A,\theta)}(D + n\theta) = \tilde{q}_{(\A,\theta)}(D)$ for all $n\in \Z$. Consequently, $\tilde{q}_{(\A,\theta)}$ induces a defines a positive-definite quadratic form $q_{(\A,\theta)}$ on the quotient module $\NS(\A,\theta)$.\footnote{This can be seen as an integral quadratic form on $\Z^{\rho-1}$ since $\NS(\A,\theta) \simeq Z^{\rho-1}$, where $\rho$ is the Picard number of $\A$.}
\begin{definition}\label{RHIdefn}
    Let $(\A,\theta)$ be a principally polarized abelian surface. A
    \emph{refined Humbert invariant}\footnote{Note that a refined Humbert invariant is considered
    up to $\GL_n(\Z)$ equivalences.} $q_{(\A,\theta)}$ of $(\A,\theta)$ is the positive-definite
    integral quadratic form in $\rho-1$ variables on $\NS(\A,\theta)$ given by
    \[q_{(\A,\theta)}(D+\Z\theta) = (D \cdot\theta)^2 - 2(D\cdot D),\qquad D\in\NS(\A).\]
\end{definition}

The main use of a refined Humbert invariant in this paper is Kani's splitting theorem, which
converts the splitting of an abelian surface into a primitive representation.
\begin{theorem}\emph{\cite[Theorem 20]{KaniESCI}}\label{SplittingTheorem}
A curve $\C/K$ has an elliptic\footnote{Theorem~\ref{SplittingTheorem} incorporated into our
algorithms works on genus $2$ curves lying on the product surface, that is to mean
$\mathcal{J(C)}\cong E_1 \times E_2$ for any two elliptic curves $E_1$ and $E_2$ (not necessarily
supersingular elliptic curves).} subcover of degree $N$ if and only if $q_{\C}$ primitively
represents $N^2$. Equivalently, $\mathcal{J(C)}$ is $(N,N)$ split if and only if $q_{\C}$
primitively represents $N^2$.
\end{theorem}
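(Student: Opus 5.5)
This is Kani's theorem, and the plan is to reconstruct his argument from the intersection theory on $\NS(\A)$ with $\A=\mathcal{J(C)}$ and $\theta$ the class of $\C\subset\A$. The first step is to translate elliptic subcovers into divisor classes. A finite morphism $f\colon\C\to E$ of degree $N$, which we may assume does not factor through an isogeny of $E$ (a minimal subcover), induces via Albanese functoriality a surjection $\pi\colon\A\to E$ with connected kernel. Let $E'=\ker\pi$, or equivalently the dual elliptic curve $\pi^*E\subset\A$, and let $D=[E']\in\NS(\A)$.

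Standard intersection theory then gives $(D\cdot D)=0$, since $E'$ is an elliptic curve on an abelian surface, and $(D\cdot\theta)=\deg(f)=N$, since $(E'\cdot\C)$ equals the degree of $\C\to\A/E'\cong E$. Consequently
\[
q_{(\A,\theta)}(D+\Z\theta)=N^2-0=N^2 .
\]
Primitivity of $D+\Z\theta$ in $\NS(\A,\theta)$ should follow from minimality. If $D\equiv mD_0+n\theta$ with $m>1$, one uses that $D$ is primitive in $\NS(\A)$ (the class of an elliptic curve with connected fibres) together with a computation of $D_0$ to show that $f$ factors through a multiplication map, contradicting minimality.

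For the converse, suppose $q(D+\Z\theta)=N^2$ primitively. The aim is to choose a representative $D'=D+n\theta$ with $(D'\cdot D')=0$ and $(D'\cdot\theta)=N$. Since $(D+n\theta)^2=D^2+2nN_0+2n^2$, where $N_0=(D\cdot\theta)$, and $N_0^2-2D^2=N^2$, we get $N_0\equiv N\pmod 2$. Taking $n=(N-N_0)/2$, possibly after replacing $D$ by $-D$, gives
\[
(D'\cdot D')=\tfrac12\bigl((N_0+2n)^2-N^2\bigr)=0, \qquad (D'\cdot\theta)=N>0 .
\]
Next we show that a primitive class with self-intersection $0$ and positive degree against the ample class $\theta$ is effective and nef, and is the class of an elliptic curve $E'\subset\A$. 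By Riemann--Roch on abelian surfaces, $\chi(D')=D'^2/2=0$. Positivity against $\theta$, together with the Hodge index theorem and Mumford's description of line bundles with $K(L)$ infinite, shows that $D'$ is a positive multiple of the class of an elliptic curve. Primitivity in $\NS(\A)$, inherited from primitivity modulo $\theta$ after the shift, forces that multiple to be $1$. The composite $\C\hookrightarrow\A\to\A/E'$ then has degree $(E'\cdot\C)=N$, which gives the subcover. The $(N,N)$-splitting equivalence follows from the standard correspondence: $E'$ and its complementary curve $E''$ satisfy $E'\cap E''\cong E'[N]$, so $E'\times E''\to\A$ is an $(N,N)$-isogeny, and conversely.

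The main obstacle is the converse step showing that an isotropic, primitive, $\theta$-positive class is represented by an actual elliptic curve. Over $\overline{\F}_p$ the cone arguments (Hodge index, nefness of $D'$ since $\A$ contains no curves of negative self-intersection) need care, and I would handle them via the description of $\NS(\A)$ as symmetric endomorphisms, $\NS(\A)\cong\End^{\mathrm{sym}}(\A)$ via $\theta$. In that description $D'^2=0$ corresponds to a symmetric endomorphism $\alpha$ with $\alpha^2=N\alpha$. Then $\ker(\alpha)^0$ yields $E'$ directly, and primitivity keeps the kernel connected.
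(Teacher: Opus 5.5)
The paper gives no proof of this statement. It is quoted from Kani \cite[Theorem~20]{KaniESCI} and used as a black box. Your outline reconstructs the argument behind Kani's result, and it is sound in outline. The two ingredients are these: elliptic subgroups $E'\subset\A$ correspond to primitive classes $D\in\NS(\A)$ with $(D\cdot D)=0$ and $(D\cdot\theta)>0$; and the shift $D\mapsto D+n\theta$ matches these with primitive vectors of $q_{(\A,\theta)}$ of value $(D\cdot\theta)^2$. The shift is unique once the $\theta$-degree is fixed, because $(\theta\cdot\theta)=2$.

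Compared with the bare citation, your route shows exactly where minimality of $f$ enters: the kernel of $f_*$ is connected and reduced. It also shows that one gets a correspondence between elliptic subgroups of $\theta$-degree $N$ and primitive solutions of $q_{\C}(x)=N^2$, not merely an existence statement. The citation buys brevity and Kani's treatment of arbitrary ground fields.

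The core of your converse needs no Hodge index. From $\chi(L)^2=\deg\phi_L$ and $\chi(L)=(D'\cdot D')/2=0$, $K(L)$ is infinite. $K(L)^0$ is then an elliptic curve $B$, because $(D'\cdot\theta)\neq0$. Mumford's description gives $D'=d[B]$, and $d=1$ by positivity and primitivity. No sign change of $D$ is needed either: $n=(N-N_0)/2$ works as it stands.

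Three points still need to be written out.

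\textbf{Forward primitivity.} Your sketch conflates two separate facts.
\begin{itemize}
\item \emph{$[E']$ is primitive in $\NS(\A)$.} Since $\mathcal{O}_{\A}(E')\cong\pi^*\mathcal{O}_E(0)$, one has $\phi_{E'}=\hat\pi\circ\lambda_E\circ\pi$. Because $\ker\pi$ is connected, $\hat\pi$ is a closed immersion, so $\ker\phi_{E'}=\ker\pi=E'$. Hence $[E']=mD_1$ would give $\A[m]\subseteq\ker\pi$, so $\pi=[m]_E\circ\pi'$ and $f$ factors through $[m]_E$. This is your ``multiplication map''.
\item \emph{An isotropic class of $\theta$-degree $N$ that is primitive in $\NS(\A)$ stays primitive modulo $\Z\theta$.} Suppose $D=mD_0+n\theta$. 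Then $m^2q(D_0+\Z\theta)=N^2$ gives $m\mid N$. The same parity argument you use in the converse gives $(D_0\cdot\theta)\equiv N/m\pmod 2$. So your own shift yields a lift $D_0'$ of $D_0+\Z\theta$ with $(D_0'\cdot\theta)=N/m$. Now $mD_0'$ and $D$ lift the same class with the same $\theta$-degree, hence coincide, contradicting the first fact.
\end{itemize}

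\textbf{Converse minimality.} You must check that $\C\to\A/E'$ is minimal, since the paper's definition of an elliptic subcover requires it. This holds because $\A\to\A/E'$ has connected, reduced kernel. By the universal property of the Jacobian, any factorization through an isogeny $\beta$ of degree $>1$ would put the preimage of $\ker\beta$ inside that kernel.

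\textbf{The two curves.} $\ker\pi$ and the image of $\pi^*$ are not ``equivalently'' the same curve. They are the complementary pair, meeting in a copy of $E[N]$. Your degree computation is the one for $\ker\pi$.
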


\subsection{Primitive representations by positive definite quadratic forms}
\label{chap:integral-local-global}
A \emph{quadratic lattice} is a pair $(\Z^n,q)$, where $q$ is a quadratic form. A \emph{representation} of $(\Z^m,q)$ by $(\Z^n,q_0)$ is an isometry $\eta:(\Z^m,q)\to(\Z^n,q_0)$. It is \emph{primitive} if $\eta(\Z^m)$ is a primitive sublattice of $\Z^n$, equivalently, $\eta(\Z^m)=(\eta(\Z^m)\otimes_{\Z}\Q)\cap\Z^n$. We say that $(\Z^m,q)$ is \emph{locally primitively representable} by $(\Z^n,q_0)$ if such a primitive representation exists over $\Z_\nu$ for every prime $\nu$ and over $\mathbb{R}$.

The integral local--global problem asks when local primitive representability implies primitive representability over $\Z$. For indefinite target forms and $n\ge m+3$, this follows from Eichler's strong approximation for spin groups~\cite{Eichler1952Aehnlichkeitsklassen}. The positive definite case is subtler. Hsia, Kitaoka, and Kneser~\cite{HsiaKitaokaKneser1978} proved a local--global theorem for $n\ge 3m+3$ when the minimum of the represented lattice is sufficiently large, and J\"ochner and Kitaoka~\cite{JochnerKitaoka1994} reduced the dimension requirement to $n\ge 2m+3$ under additional hypotheses. Ellenberg and Venkatesh~\cite{EllenbergVenkatesh2008} subsequently obtained an ineffective result for $n\ge m+5$, and also for $n\ge m+3$ when the minimum is sufficiently large. A recent effective result gives the following form of the principle in codimension at least~$3$.

\begin{theorem}[{\cite[Theorem 1.3]{EinsiedlerLindenstraussMohammadiWieser2025}}]
\label{thm:integralHasse}
Let $m,n\in\Z_{>0}$ with $n\ge m+3$. There exist constants $C,A>0$, depending only on $n$, such that the following holds. Let $(\Z^m,q)$ and $(\Z^n,q_0)$ be positive definite quadratic lattices. If $(\Z^m,q)$ is locally primitively representable by $(\Z^n,q_0)$ and $\min_{\{0\neq x\}}q(x)\ge C\disc(q_0)^A$, then $(\Z^m,q)$ is primitively representable by $(\Z^n,q_0)$.
\end{theorem}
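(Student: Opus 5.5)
The plan is to follow the Linnik--Ellenberg--Venkatesh strategy: turn the existence of a primitive representation into an equidistribution statement for adelic periods, and make that statement effective with polynomial dependence on all discriminants. I write $G=\mathrm{SO}(q_0)$ over $\Q$ and work in the adelic quotient $X=G(\Q)\backslash G(\mathbb{A})$.

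\emph{Setting up.} Since $(\Z^m,q)$ is locally primitively representable, I will first choose a rational representation $\eta_0:\Q^m\to\Q^n$. Such a $\eta_0$ exists by the Hasse--Minkowski theorem applied to the orthogonal complement: it has rank $n-m\ge 3$, and its local invariants are prescribed by the local data. The $\Z$-primitive local representations $\eta_\nu$ differ from $\eta_0$ by elements $g_\nu\in G(\Q_\nu)$, and $g_\nu$ is integral for almost all $\nu$. Let $H\subset G$ be the pointwise stabilizer of $\eta_0(\Q^m)$, so $H\cong\mathrm{SO}(L^\perp)$ with $\operatorname{rank}L^\perp=n-m\ge3$. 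I will check two things: $H$ is semisimple (for $n-m=3$ a form of $\mathrm{PGL}_2$, and it has no central torus because the rank is at least $3$), and $H$ is isotropic at some suitably chosen prime. The double coset $H(\Q)\backslash H(\mathbb{A})\,g\,K_0$, with $g=(g_\nu)$ and $K_0$ the stabilizer of $\Z^n$, then parametrizes local primitive representations up to the action of $G$. The standard Siegel/Eichler dictionary shows that an integral primitive representation exists exactly when the translated orbit $H(\Q)\backslash H(\mathbb{A})g$ meets the open set $G(\Q)\backslash G(\Q)G(\mathbb{R})K_0$ in $X$. Equivalently, it must meet the genus-class of $q_0$ itself rather than only some class in $\Gen{q_0}$.

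\emph{Volume and equidistribution.} Next I bound the volume of the periodic orbit $Y=H(\Q)\backslash H(\mathbb{A})g$ from below in terms of the data. Its volume is comparable to a power of $\disc(L^\perp)$, up to factors polynomial in $\disc(q_0)$; this follows from the Prasad volume formula and the Siegel mass formula. Since $\disc(L^\perp)$ divides $\disc(q)\disc(q_0)$ up to bounded index, and Hermite's inequality gives $\min q\le \gamma_m\disc(q)^{1/m}$, the hypothesis $\min q\ge C\disc(q_0)^A$ forces $\operatorname{vol}(Y)\gg \disc(q_0)^{A'}$ for a large $A'$. The key input is then an effective equidistribution theorem: for any smooth test function $f$ on $X$,
\[
\Bigl|\frac{1}{\operatorname{vol} Y}\int_Y f-\int_X f\Bigr|\ll \operatorname{vol}(Y)^{-\kappa}\,\mathcal{S}(f),
\]
with $\kappa>0$ depending only on $n$, and with the Sobolev norm $\mathcal{S}(f)$ and the implied constant polynomial in $\disc(q_0)$. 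I would apply this to a bump function supported on the identity-class neighbourhood $G(\Q)G(\mathbb{R})K_0$. Its measure is at least a negative power of $\disc(q_0)$ by the mass formula, and its Sobolev norm is polynomial in $\disc(q_0)$. Once $\operatorname{vol}(Y)^{-\kappa}$ beats these powers, the integral over $Y$ is positive, so the orbit meets the class of $q_0$ and an integral primitive representation exists. Choosing $A$ large enough makes this automatic, which yields the constants $C,A$ depending only on $n$.

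\emph{Main obstacle.} The hard step is the effective equidistribution with polynomial dependence on both $\operatorname{vol}Y$ and $\disc(q_0)$. I would prove it by the Einsiedler--Margulis--Venkatesh method. First pick an auxiliary prime $\nu$ not dividing $2\disc(q)\disc(q_0)$ at which $H$ is isotropic, and bound such a $\nu$ polynomially in $\log\operatorname{vol}Y$. Then use the uniform spectral gap (property $(\tau)$, via Clozel and Burger--Sarnak) for $H(\Q_\nu)$ acting on $L^2_0(X)$. Finally, combine mixing with an effective non-divergence and ``escape from intermediate subgroups'' argument, which shows that $Y$ is not trapped near an orbit of a proper intermediate group $H\subsetneq M\subsetneq G$. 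The codimension condition $n-m\ge3$ is used precisely here: it guarantees both semisimplicity of $H$ and enough room in $G$ to control the intermediate subgroups.
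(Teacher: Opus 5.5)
The paper gives no proof of this statement; it quotes it from \cite[Theorem~1.3]{EinsiedlerLindenstraussMohammadiWieser2025}. Your outline has the right overall shape for that work: periodic $\mathrm{SO}(L^{\perp})$-orbits, volume versus discriminant, and effective equidistribution via a spectral gap at a small split prime. However, the central step fails as you state it, and the proposal puts the hypothesis on $\min q$ in the wrong place.

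\textbf{The equidistribution bound is false for non-maximal $H$.} For $m\ge2$ the group $H=\mathrm{SO}(L^{\perp})$ is not maximal in $G$. No bound $|\mu_Y(f)-\mu_X(f)|\ll\operatorname{vol}(Y)^{-\kappa}\mathcal{S}(f)$ with constants polynomial in $\disc(q_0)$ can hold. If $L=\eta(\Z^m)$ contains a vector $v$ of small norm, then $Y$ lies inside the periodic orbit of $\mathrm{SO}(v^{\perp})\supsetneq H$. That orbit is a closed null subset of $X$ whose volume is bounded in terms of $q(v)$ and $\disc(q_0)$, however large $\operatorname{vol}(Y)$ is.

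The paper's own genus gives a concrete instance. Take $q_0$ an irreducible class in $\Gen{q_5}$ ($n=5$), and $q=x^2+Ny^2$ ($m=2$) with $4\mid N$ and $p\nmid N$.
\begin{itemize}
\item $q$ is locally primitively represented. At $2$, use $e_0$ and $e_1+\tfrac N4 e_2$ for $\tilde q_5$ as in the proof of Theorem~\ref{thm:local-automatic}; at odd places the unimodular parts suffice.
\item $\disc(q)=4N\to\infty$.
\item Yet $q_0$ never represents $q$, because it does not represent $1$ (Proposition~\ref{irreducibilitycriteria}).
\end{itemize}
Your chain uses the hypothesis only through Hermite, to make $\disc(q)$ large. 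So it would ``prove'' these representations exist.

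\textbf{The missing idea is the treatment of intermediate orbits.} An effective theorem for non-maximal $H$ can only say that $Y$ is effectively equidistributed in some intermediate periodic orbit $Z\supseteq Y$. You must then show that, under $\min q\ge C\disc(q_0)^A$, no proper intermediate orbit containing $Y$ has small volume. For the orbits of the pointwise stabilizers $\mathrm{SO}(L'^{\perp})$ of primitive sublattices $L'\subseteq L$ of rank $r$, this is exactly what the minimum provides: Hermite applied to $L'$ gives $\disc(L')\gg(\min q)^{r}$.

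The codimension condition $n-m\ge3$ does not give ``room to control intermediate subgroups''. What it gives is semisimplicity of $H$, and surjectivity of the local spinor norms of $\mathrm{SO}(L^{\perp}_\nu)$. You also need the latter, and omit it: it is what lets you pass from a $G(\mathbb{A})^{+}$-invariant limit ($G(\mathbb{A})^{+}$ the image of $\mathrm{Spin}(\mathbb{A})$) to the full Haar measure on $X$, so that the spinor genus of $q_0$ is actually reached.

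\textbf{Your volume step runs the wrong way.} ``$\disc(L^{\perp})$ divides $\disc(q)\disc(q_0)$'' is an upper bound, so it cannot give a lower bound on $\operatorname{vol}(Y)$. With $\Lambda=(\Z^n,q_0)$, what you need is $\det L^{\perp}\ge\det L/\det\Lambda$, up to powers of $2$. This follows from $\det L\cdot\det L^{\perp}=\det\Lambda\,[\Lambda:L\oplus L^{\perp}]^2$ together with the injection of $\Lambda/(L\oplus L^{\perp})$ into $(L^{\perp})^{\#}/L^{\perp}$. The passage from there to $\operatorname{vol}(Y)$ must use the congruence level imposed by this glue, since the mass of $L^{\perp}$ alone does not grow under rescaling.
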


For $m=1$, this gives a local-global principle for primitive representations of sufficiently large integers by positive definite forms; classical results in this direction go back to Kloosterman and Tartakowsky (see~\cite[\S 11]{Iwaniec1997}). Related work of Kitaoka~\cite{Kitaoka1989SomeRemarks,Kitaoka1994MinimumPrimitiveI,Kitaoka1996MinimumPrimitiveII} studies more generally the relation between primitive and non-primitive representation problems.

\subsection{Related work on splittings}
It is important to distinguish three computational formulations of the $(N,N)$-splitting problem for a principally polarized superspecial abelian surface $(\A,\theta)$: (i) determining a splitting degree $N$ for which $(\A,\theta)$ admits an $(N,N)$-splitting; (ii) for a prescribed $N$, deciding or certifying whether such an $(N,N)$-splitting exists; and (iii) constructing the corresponding explicit geometric isogeny $\A\to E_1\times E_2$. Existing approaches address different parts of this problem. 

Corte-Real Santos--Costello--Frengley \cite{Splitting,detectionofendomorphisms} work geometrically and, for a prescribed integer $N\leq 11$, efficiently test whether a superspecial genus $2$ Jacobian is optimally $(N,N)$-split using the Humbert surface $\mathcal{H}_{N^2}$. Their test certifies the existence of such a splitting but does not construct the corresponding $(N,N)$-isogeny. They incorporate these fixed-$N$ tests into \emph{SplitSearcher}, which performs a walk in the Richelot $(2,2)$-isogeny graph and uses splitting detection to accelerate the search for a suitable endpoint. Thus, this approach addresses the fixed-$N$ detection problem rather than (i), in which the candidate values of $N$ are chosen in advance rather than determined intrinsically from $(\A,\theta)$. 

Kunzweiler--Shen \cite{SabrinaSplitting} instead study the computational consequences of having a splitting. They reduce the problem of finding a non-scalar endomorphism of a supersingular elliptic curve to a restricted \emph{good splitting problem}, in which the returned splitting is required to satisfy an additional compatibility condition with a preceding isogeny, and give a proof-of-concept implementation based on a search in the isogeny graph via theta functions and Kani's lemma. Kutas--Shen \cite{petersplitting} subsequently eliminates the need for this restriction at the level of the security reduction: they give an oracle reduction from the non-scalar endomorphism problem to the unrestricted splitting problem, in which the oracle may return an arbitrary splitting. They further derive a heuristic equivalence between a suitably restricted degree splitting problem and the problem of computing the full endomorphism ring of a principally polarized superspecial abelian surface. These results clarify the computational relation between splitting and endomorphism computation, but treat the splitting problem itself as an oracle: they do not provide an algorithm for determining a splitting degree or constructing the corresponding explicit splitting isogeny.

On the algebraic representation,  $\mathrm{KLPT}^2$~\cite{KLPT^2} is particularly relevant since its input consists of IKO matrices (representing principal polarizations) on superspecial abelian surfaces. Thus, the IKO representation retains precisely the polarization data, and needed for polarized dimension $2$ isogeny computations. This is the same type of polarized algebraic input from which the refined Humbert invariant is obtained in \cite{edagaurish}. Page--Robert--Soumier \cite{AurelJulienDamien} are especially relevant in this aspect, as they use splittings as an ingredient to improve $\mathrm{KLPT}^2$. Their  interpretation Hermitian modules allows them to work explicitly with orthogonal decompositions into rank-$1$ projective submodules and to construct smooth splitting similitudes from rank-$2$ modules to split modules. The corresponding polarized isogenies $\A_i\to E_0\times E_i$ are then combined with $\mathrm{KLPT}$ algorithm \cite{KLPT} of elliptic curves, reducing the output degree of $\mathrm{KLPT}^2$ from $O(p^{25})$ to $O(p^{15.5})$. Their result, therefore, highlights the importance of splitting isogenies for improving the efficiency of algebraic dimension-2 isogeny path-finding problem.

The refined Humbert invariant provides a direct arithmetic formulation of this splitting detection problem. From the algebraic representation of $(\A,\theta)$, one obtains the quintic refined Humbert invariant $q_{(\A,\theta)}$ , and  the $(N,N)$-splitting is detected by a primitive representation $q_{(\A,\theta)}(x)=N^2$. Thus, the splitting degree $N$ itself can be searched for through primitive square representations of a positive definite integral quadratic form in five variables obtained from a principal polarization encoded by a IKO matrix.

\section{Algorithms} \label{designing}
\subsection{Constructing random polarizations} 
Let $p \equiv 11 \pmod{12}$ be prime, let $\Bp = (-1,-p \mid \Q)$ be the definite quaternion
algebra over $\Q$, and fix the maximal order
$\Ord = \Z\langle 1, \beta_1, \beta_2, \beta_3 \rangle
       = \Z\left\langle 1,\ i,\ \tfrac{i+j}{2},\ \tfrac{1+ij}{2} \right\rangle
       \;\cong\; \End(E)$,  $j(E) = 1728$.
A principal polarization on the superspecial surface $\A = E \times E$ is then a positive definite
unimodular Hermitian matrix \cite[Equation~(11)]{edagaurish}
\[
  \theta = \begin{pmatrix} u & \alpha \\ \bar\alpha & v \end{pmatrix}, \quad u,v \in \Z_{>0},\ \alpha \in \Ord,
  \quad \det\theta = uv - \nrd(\alpha) = 1 ,
\]
and the object of this subsection is to sample such a $\theta$ at random, for $p$ far larger than
exhaustive enumeration can reach.

The sampler does not work directly on an order basis. It works instead in the half-coordinates
$\alpha = \frac{1}{2}(A_0 + A_1 i + A_2 j + A_3 ij)$, for which the reduced norm is
$\nrd(\alpha) = \frac{1}{4}\left(A_0^2 + A_1^2 + p(A_2^2+A_3^2)\right)$ and membership
$\alpha \in \Ord$ amounts to the two congruences $A_0 \equiv A_3$ and $A_1 \equiv A_2 \pmod 2$.
Setting $z = A_3$, $y = A_2$, $w = \frac{A_0-z}{2}$ and $x = \frac{A_1-y}{2}$ converts a solution
back to $\alpha = w + x\beta_1 + y\beta_2 + z\beta_3$, which is the $[u,v,w,x,y,z]$ record
format of the generated data.

As a construction, this is not new, and it is worth being precise about what is borrowed. The
overall shape --- draw $u$ and $v$, set $n = uv-1$, and hand $n$ to a norm-equation solver --- is
$\mathrm{KLPT}^2$'s \texttt{RandomPolarisation}~\cite{KLPT^2}. The solver it calls is the
Kohel--Lauter--Petit--Tignol \texttt{RepresentInteger}~\cite{KLPT}, in the form used by SQIsign~\cite{SQIsign}. That the resulting matrices are exactly the principal polarizations is due to
Ibukiyama--Katsura--Oort \cite[Corollary~2.9]{ibukiyama1986supersingular}, restated as \cite[Theorem~2.8]{KLPT^2}. What is new here is a restriction on the search, which the following observation makes available.

\begin{lemma}\label{lem:parity}
  Let $p \equiv 3 \pmod 4$, let $n \in \Z$, and set $\rhs = 4n - p(A_2^2+A_3^2)$. Then
  \[
    \rhs \equiv
    \begin{cases}
      0 \pmod 4 & \text{$A_2,A_3$ both even}, \\
      1 \pmod 4 & \text{$A_2,A_3$ of opposite parity}, \\
      2 \pmod 4 & \text{$A_2,A_3$ both odd}.
    \end{cases}
  \]
  In particular the condition $\rhs \equiv 1 \pmod 4$ required by \texttt{Cornacchia}~\cite{cornacchia} holds exactly for
  opposite-parity pairs $(A_2,A_3)$, and this is known before $\rhs$ is computed.
\end{lemma}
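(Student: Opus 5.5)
The statement is a direct congruence computation modulo $4$, and I would prove it exactly that way. The term $4n$ vanishes mod $4$, so $\rhs \equiv -p(A_2^2+A_3^2) \pmod 4$. Using $p\equiv 3\equiv -1 \pmod 4$, this becomes $\rhs \equiv A_2^2+A_3^2 \pmod 4$. It then remains only to use the standard fact that a square is $0$ or $1$ modulo $4$ according as its root is even or odd.

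The three cases follow immediately.
\begin{itemize}
\item If $A_2,A_3$ are both even, then $A_2^2+A_3^2\equiv 0$.
\item If they have opposite parity, then exactly one square is $\equiv 1$, so the sum is $\equiv 1$.
\item If both are odd, then the sum is $\equiv 2$.
\end{itemize}
Since $A_2^2+A_3^2$ can never be $\equiv 3 \pmod 4$, these cases exhaust all possibilities. Hence $\rhs\equiv 1 \pmod 4$ holds if and only if $(A_2,A_3)$ is an opposite-parity pair.

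For the ``in particular'' clause, I would remark on how the test is used. The condition requested by \texttt{Cornacchia} for the equation $A_0^2+A_1^2=\rhs$ is $\rhs \equiv 1 \pmod 4$. By the computation above, this condition depends only on the parities of $A_2$ and $A_3$, and not on $n$ or on the actual values. So the check can be made when the pair $(A_2,A_3)$ is drawn, before $\rhs$ is formed.

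I would also note how this fits the order congruences $A_0\equiv A_3$ and $A_1\equiv A_2 \pmod 2$. With opposite parity, $A_0$ and $A_1$ have opposite parity, so $A_0^2+A_1^2$ is odd. This is consistent with $\rhs\equiv 1$, and the lemma does not need it beyond that.

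There is no real obstacle here. The only point needing care is the sign step $-p\equiv 1 \pmod 4$, which is the sole place the hypothesis $p\equiv 3 \pmod 4$ enters.
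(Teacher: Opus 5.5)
Your proof is correct and is essentially the paper's argument. You drop $4n$, use $-p\equiv 1 \pmod 4$ to get $\rhs\equiv A_2^2+A_3^2 \pmod 4$, and read off the three cases from the fact that squares are $0$ or $1$ modulo $4$. The paper does the same computation, applying $-p\equiv 1$ and $-2p\equiv 2 \pmod 4$ directly to the case values of $A_2^2+A_3^2$.
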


\begin{proof}
  A square is $0$ or $1$ modulo $4$ according as its root is even or odd, so $A_2^2+A_3^2$ is
  congruent to $0$, $1$ or $2$ in the three respective cases. Since $p \equiv 3 \pmod 4$ we have
  $-p \equiv 1$ and $-2p \equiv 2 \pmod 4$, while $4n \equiv 0$, so that
  $\rhs \equiv -p(A_2^2+A_3^2) \pmod 4$ takes the stated values.
\end{proof}

Restricting the search to opposite-parity pairs, therefore, throws away half of every draw that
could never have reached the Cornacchia step (\lin{lin:corn}), and none that could. With that in hand, the whole sampler fits in one pass, as Algorithm~\ref{alg:polzrandom}.

\begin{algorithm}[H]
  \caption{\texttt{RandomPolarizations}: sample $T$ principal polarizations}
  \label{alg:polzrandom}
  \Input{prime $p \equiv 11 \pmod{12}$; exponent $\mathtt{sbound}$; sample size $T$}
  \Output{a list $\Theta$ of $T$ matrices $\theta = \theHmat$ with $u,v \in \Z_{>0}$, $\alpha \in \Ord$, $\det\theta = 1$}
  $\Theta \gets \varnothing$\;
  \While{$|\Theta| < T$}{
    $u \gets$ uniform in $[1,\, p^{\mathtt{sbound}}]$\label{lin:u}\;
    $v \gets$ uniform in $[1,\, p^{3\,\mathtt{sbound}}]$\;
    $n \gets uv - 1$\label{lin:N}\;
    $B \gets \lfloor \sqrt{4n/p} \rfloor$\label{lin:B}\;
    \Repeat{$\rhs > 0$ \KwAnd $\rhs$ is prime\label{lin:until}}{
      sample $(A_2,A_3)$ uniformly in $[-B,B]^2$ of opposite parity\label{lin:sample}\;
      $\rhs \gets 4n - p\,(A_2^2 + A_3^2)$\label{lin:rhs}\;
    }
    \tcp*[l]{a draw exhausting its retry budget is abandoned; redraw $u,v$}
    $(A_0, A_1) \gets$ \Cornacchia{$\rhs$}\label{lin:corn}\;
    order $A_0, A_1$ so that $A_0 \equiv A_3$ and $A_1 \equiv A_2 \pmod 2$\label{lin:order}\;
    $\alpha \gets \tfrac{1}{2}(A_0 + A_1 i + A_2 j + A_3 ij)$\label{lin:alpha}\;
    append $\theta = \theHmat$ to $\Theta$\label{lin:ret}\;
  }
  \Return $\Theta$\;
\end{algorithm}

The asymmetry of the bounds in \lins{lin:u}{lin:N} is deliberate: letting $v$ range over
$p^{3\,\mathtt{sbound}}$ while $u$ ranges only over $p^{\mathtt{sbound}}$ makes $n = uv-1$ vary
widely relative to $p$, which keeps the supply of $\rhs$ passing the test in \lin{lin:until}
plentiful. \Lin{lin:B} then bounds $(A_2,A_3)$ so that $\rhs$ cannot go negative. The loop of
\lins{lin:sample}{lin:until} is the search proper, and Lemma~\ref{lem:parity} is what makes it
cheap: because \lin{lin:sample} samples only opposite-parity pairs, the congruence
$\rhs \equiv 1 \pmod 4$ holds automatically, so primality is the sole condition left to test, and
\lin{lin:corn} then recovers $\rhs = A_0^2 + A_1^2$ by Cornacchia's algorithm~\cite{cornacchia} without factoring
anything. The pair is redrawn afresh on each pass rather than swept in a fixed order, so a given
$n$ need not always produce the same $\alpha$, and a corpus generated this way does not
concentrate on small $(A_2,A_3)$. \Lin{lin:order} uses the one remaining freedom, the choice of
which of $A_0,A_1$ plays which role, to meet the membership congruences. This always succeeds: $\rhs$
is an odd prime, so exactly one of $A_0,A_1$ is even, while $A_2,A_3$ have opposite parity by
Lemma~\ref{lem:parity}; hence exactly one of the two orderings satisfies $A_0\equiv A_3$ and
$A_1\equiv A_2\pmod2$, and the line has no failure branch.
\Lins{lin:alpha}{lin:ret} assemble $\theta$, and since $\nrd(\alpha) = n = uv-1$ by construction, its determinant is $1$ without any verification step.

A word on what is being sampled. Algorithm~\ref{alg:polzrandom} draws Hermitian \emph{matrices},
not isomorphism classes, and the fibres of the map from matrices to classes are not of equal size:
distinct $\theta$ can give isomorphic $(\A,\theta)$, and the number that do is governed by the
automorphisms of the surface. The distribution induced on isomorphism classes is therefore not
uniform, and we do not determine it here.

What the algorithm does produce, with overwhelming probability, is an \emph{irreducible}
polarization, which is the case of interest for splitting. Of the $\mathbf{H}$ principal
polarizations on $E\times E$, exactly $\frac{\mathbf{h}(\mathbf{h}+1)}{2}$ are reducible
\cite[Table 1]{edagaurish}, so by \eqref{eq:redfrac} the reducible proportion is $10/p+O(p^{-2})$
and tends to zero. A reducible draw is in any case detected and discarded by
Proposition~\ref{irreducibilitycriteria}, so this proportion measures only how rarely a draw is
wasted. What does bear on the experiments is whether the distribution induced on the isometry
classes of \emph{irreducible} polarizations distorts the statistics we read off them; that is
tested in Section~\ref{data} against the lists of \cite{edagaurish}.

Three practical matters are suppressed above. The search in \lins{lin:sample}{lin:until} carries a
retry budget, and a draw that exhausts it is abandoned and restarted with fresh $u,v$; the budget
grows as $\Theta(\mathtt{sbound} \cdot \log p)$, which matches the $O(\log p)$ samples one expects
to need before a prime $\rhs$ appears. For small $p$ the implementation will also accept a
composite $\rhs$ that it can factor outright, which brings the same-parity cases of
Lemma~\ref{lem:parity} back into play; that path is unreachable at cryptographic size, and the
restriction in \lin{lin:sample} is imposed only once it is provably dead. Finally,
$\mathtt{sbound}$ is not a constant here. $\mathrm{KLPT}^2$ fixes $\mathtt{sbound} = 20$, so that
$n \approx p^{80}$, which is harmless for a small test prime but runs to some $80{,}000$ bits for
a $1000$-bit one; choosing $\mathtt{sbound} \approx \mathtt{target\_bits}/(4\log_2 p)$ instead
holds the norm equation near a fixed size across the whole range.

It is worth setting Algorithm~\ref{alg:polzrandom} beside \cite[Algorithm~2]{edagaurish}, which
answers a different question. That algorithm, \texttt{polz}$(p, V_{\max}, U_{\max}, T_{\max})$,
enumerates principal polarizations exhaustively and classifies them up to isomorphism, growing its
bounds until the count reaches $\mathbf{H}(p)$. That stopping condition is met, and the list is
then complete, for $p\leq191$; at $p=227,239,251$ the published runs halt short of it
\cite[Table 1]{edagaurish}, and in each case every polarization missed is reducible, so the
irreducible ones --- the case of interest here --- are complete throughout $p\leq251$.
Algorithm~\ref{alg:polzrandom} surrenders both the completeness and the
classification, returning raw polarizations rather than representatives, and buys with that the
ability to keep going at sizes where enumeration has long since stopped. The two are
complementary rather than interchangeable.

\subsection{Primitive representations of $4^k$ and $\ell^{2k}$}\label{sec:primreps}

We now specialize to the case needed for quintic refined Humbert invariants below, following \cite[Section 3.1]{edagaurish}. Fix an odd prime $p\equiv 11\pmod{12}$ and let
\[
q_5(x_0,x_1,x_2,x_3,x_4)
\;=\; x_0^2+4x_1^2+4x_1x_3+4x_2^2+4x_2x_4+(p+1)x_3^2+(p+1)x_4^2 ,
\quad L_5:=(\Z^5,q_5).
\]
For a quadratic form $q$ in $n$ variables we write $G_q$ for the symmetric
matrix with $q(x)=x^{t}G_qx$, so that $\disc(q)=\det(2G_q)=2^n\det G_q$. In
the order of variables $x_0;\,x_1,x_3;\,x_2,x_4$ the matrix $G_{q_5}$ is block
diagonal,
\[
G_{q_5}=(1)\oplus\begin{pmatrix}4&2\\2&p+1\end{pmatrix}\oplus
\begin{pmatrix}4&2\\2&p+1\end{pmatrix},
\quad
\det G_{q_5}=1\cdot4p\cdot4p=16p^2,
\quad
\disc(q_5)=2^5\cdot16p^2=2^9p^2 ,
\]
and $L_5$ is positive definite. All lattices $L=(\Z^5,q)$ of interest
below lie in the genus of $L_5$, i.e.\ $q$ and $q_5$ agree up to a change of
variables in $\GL_5(\Z_\nu)$ for every prime $\nu$; in particular
$\Delta_L=\disc(q)=2^9p^2$.
\paragraph{Represents locally.} For $m=1$ the lattice to be represented is $(\Z,tz^2)$ with
$t\in\Z_{>0}$. A representation $\eta$ of $(\Z,tz^2)$ by
$(\Z^5,q)$ over $\Z_\nu$ is $\eta(z)=z\alpha$ with $\alpha=\eta(1)\in
\Z_\nu^5$ and $q(\alpha)=t$, and it is primitive exactly when
$\alpha\notin\nu\Z_\nu^5$; over $\mathbb{R}$ a representation exists for
every $t>0$. A change of variables in $\GL_5(\Z_\nu)$ maps
$\Z_\nu^5\setminus\nu\Z_\nu^5$ onto itself, so the existence
of such $\alpha$ depends only on the genus of $(\Z^5,q)$. 

\begin{theorem}\label{thm:local-automatic}
Let $p$ be an odd prime, let $L=(\Z^5,q)$ be a quadratic lattice in
the genus of $L_5$, and $t\in\Z_{>0}$.
\begin{enumerate}
\item[(i)] For every odd prime $\nu$, $(\Z,tz^2)$ is primitively
representable by $L$ over $\Z_\nu$.
\item[(ii)] If $p\equiv3\pmod4$, then $(\Z,tz^2)$ is primitively
representable by $L$ over $\Z_2$ if and only if $t\equiv0$ or
$1\pmod4$.
\end{enumerate}
Consequently, if $p\equiv3\pmod4$, then $(\Z,tz^2)$ is locally
primitively representable by $L$ if and only if $t\equiv0$ or $1\pmod4$;
in particular for every square $t$, hence for $t=\ell^{2k}$ with $\ell$
any prime and $k\ge0$, and for $t=4^k$.
\end{theorem}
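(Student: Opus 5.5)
Since the existence of a primitive local representation depends only on the genus (as noted just before the statement), it suffices to work with $q_5$ itself over each $\Z_\nu$. The plan is to diagonalize or decompose $q_5$ locally and exhibit explicit primitive vectors with $q(\alpha)=t$.

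For (i), let $\nu$ be an odd prime. The first coordinate already gives a unimodular unary summand $x_0^2$. Over $\Z_\nu$ with $\nu\neq p$, each binary block $\begin{psmallmatrix}4&2\\2&p+1\end{psmallmatrix}$ has determinant $4p$, a unit, so $L_5\otimes\Z_\nu$ is unimodular of rank $5$. A unimodular lattice of rank $\ge 3$ over $\Z_\nu$ with $\nu$ odd contains a hyperbolic plane $H$, and $H$ primitively represents every element of $\Z_\nu$: take $e+tf$, which is primitive since its $e$-coordinate is $1$. For $\nu=p$, reduce each block: $4x^2+4xy+(p+1)y^2=(2x+y)^2+py^2$. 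So over $\Z_p$ the lattice is $\langle1\rangle\perp\langle1\rangle\perp\langle1\rangle\perp\langle p\rangle\perp\langle p\rangle$, and the unimodular part $\langle1,1,1\rangle$ of rank $3$ is isotropic mod $p$, hence contains a hyperbolic plane by Hensel's lemma. This gives primitivity for every $t$.

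For (ii), work over $\Z_2$ with $p\equiv3\pmod4$. For necessity, I would compute $q_5(x)\pmod 4$: the terms $4x_1^2+4x_1x_3+4x_2^2+4x_2x_4$ vanish mod $4$, and $p+1\equiv0\pmod4$, so $q_5(x)\equiv x_0^2\pmod4$, which lies in $\{0,1\}$. Over $\Z_2$ this congruence holds for all $x\in\Z_2^5$, so $t\equiv2,3\pmod4$ cannot be represented at all. For sufficiency, I would find primitive vectors case by case. If $t\equiv1\pmod 8$, then $t$ is a square in $\Z_2$ and $x_0=\sqrt t$ works, with $x_0$ a unit so the vector is primitive. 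For general $t\equiv1\pmod4$, I would combine $x_0$ odd with the binary block: its values $4x^2+4xy+(p+1)y^2$ at $y=1$ give $4(x^2+x)+(p+1)$, which covers $p+1\pmod 8$ and can be adjusted, so that $t-x_0^2$ lies in the $2$-adic image of $\langle4\rangle$-type forms. The cleanest route is to show that the rank-$4$ part $M$ (both blocks) primitively represents every element of $4\Z_2$ modulo a finite check, and then set $x_0=1$, or $x_0=0$ when $t\equiv0$. Each block has $2$-adic Gram matrix $2\begin{psmallmatrix}2&1\\1&(p+1)/2\end{psmallmatrix}$, with inner matrix of odd determinant $p$; it is therefore $2$ times an even unimodular binary lattice, and $M\cong 2(U_1\perp U_2)$. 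A sum of two even unimodular binary planes over $\Z_2$ contains a hyperbolic plane, because $A\perp A\cong H\perp H$ and $H\perp H$ is trivially fine. Hence $M$ contains $2H$, which primitively represents all of $4\Z_2$ via $e+(s/4)f$ with $q=4\cdot(s/4)\cdot$appropriately normalized. Then for $t\equiv0\pmod4$ take $x_0=0$ plus this vector; for $t\equiv1\pmod4$ take $x_0=1$ and represent $t-1\in4\Z_2$ by $M$. In both cases the vector is primitive.

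The consequence follows by combining (i), (ii), and the real place. Squares are $0$ or $1\pmod4$, which covers $\ell^{2k}$ and $4^k$. The main obstacle is the $2$-adic step: identifying $M\cong 2(H\perp H)$ over $\Z_2$. I would settle this by checking that the Hasse invariant and discriminant of the two copies of the block, whether each is $2H$ or $2A$, agree with those of $2(H\perp H)$, using the relation $A\perp A\cong H\perp H$.
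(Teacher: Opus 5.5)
Your proposal is correct and follows essentially the paper's proof. At odd $\nu$ both arguments extract a hyperbolic plane from a rank-$3$ unimodular piece (the paper uses the ternary sublattice $x_0^2+4x_1^2+4x_2^2$, unimodular at every odd $\nu$ including $p$, where you instead diagonalize the blocks as $(2x+y)^2+py^2$). At $\nu=2$ both write $q_5=x_0^2+2q_4$ with $q_4$ even unimodular, and this evenness is exactly where $(p+1)/2\equiv0\pmod 2$, i.e.\ $p\equiv3\pmod4$, enters, a step you skip; both then finish with the vectors $(0,1,t/4,0,0)$ and $(1,1,(t-1)/4,0,0)$ in coordinates $y_0^2+4y_1y_2+\cdots$. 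The only difference is that the paper rules out $H\perp A$ by a determinant computation while you invoke $A\perp A\cong H\perp H$; since both $H\perp H$ and $H\perp A$ contain a hyperbolic summand, neither check is needed, so the ``main obstacle'' you flag is already resolved.
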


The proof is in Appendix~\ref{app:proof}.
\paragraph{Represents globally.} The \emph{shell} of norm $m$ is $\{x\in\Z^{5}: q(x)=m\}$;
\[
r(L,m)=\#\{x\in\Z^{5}: q(x)=m\},\quad
r^{*}(L,m)=\#\{x\in\Z^{5}: q(x)=m,\ \gcd(x)=1\}
\]
are its size and its primitive part. The lattice minimum
$\lambda_{1}(L)=\min\{q(x):0\ne x\in\Z^{5}\}$ is a squared length in this normalization. If $\ell^{2k}<\lambda_{1}(L)$ the shell is empty, and $\lambda_{1}(L)\le8^{1/5}(\det G_q)^{1/5}$
by Hermite, so the search starts at
\begin{equation}\label{eq:k1bound}
  k_1=\left\lceil\log_{\ell^{2}}\lambda_{1}\right\rceil\;\lesssim\;\tfrac15\log_{\ell}p .
\end{equation}
For a class $L$, put
$k_{0}(L)\;=\;\min\{\,k\ge1 \;:\; r^{*}(L,\ell^{2k})>0\,\}$,
the least level at which $L$ primitively represents $\ell^{2k}$; equivalently, the least $k$ for which $(\A,\theta)$ admits an $(\ell^{k},\ell^{k})$-splitting. For $k\ge0$,
\begin{equation}\label{eq:mobius}
r(L,\ell^{2k})\;=\;\sum_{i=0}^{k}r^{*}(L,\ell^{2i}),\quad\text{hence}\quad
k_{0}(L)\le k\iff r(L,\ell^{2k})>0 .
\end{equation}
Moreover the shell of norm $\ell^{2k_{0}}$ consists of primitive vectors whenever $\lambda_{1}(L)>1$: if $q(x)=\ell^{2k_{0}}$ and $g=\gcd(x)>1$, then $g\mid\ell^{k_{0}}$, so $x/g$ has norm $\ell^{2i}$ with $i<k_{0}$, which contradicts the minimality of $k_{0}$ when $i\ge1$ and contradicts $\lambda_{1}(L)>1$ when $i=0$. Note that the term $i=0$ in \eqref{eq:mobius} is nonzero precisely when $q$ represents $1$, that is, precisely when $\theta$ is reducible by Proposition~\ref{irreducibilitycriteria}. Representation is therefore monotone in $k$ by \eqref{eq:mobius} while
primitive representation is not: a class may primitively represent
$\ell^{2k_{0}}$ and yet represent $\ell^{2(k_{0}+1)}$ only imprimitively.

\begin{corollary}\label{cor:local-automatic}
Let $p\equiv3\pmod4$ and let $L=(\Z^5,q)$ lie in the genus of $L_5$.
With $C_5,A_5$ the constants of Theorem~\ref{thm:integralHasse} for $m=1$, $n=5$,
every integer $t\equiv0,1\pmod4$ with $t\ge C_5\,\Delta_L^{A_5}$ is
primitively represented by $L$. In particular $\ell^{2k}$ is primitively
represented by $L$ as soon as
\[
k\;\ge\;\tfrac12\log_\ell C_5+\tfrac{A_5}{2}\log_\ell \Delta_L
\;=\;\tfrac12\log_\ell C_5+\tfrac{9A_5}{2}\log_\ell 2+A_5\log_\ell p ,
\]
so the threshold satisfies $k_0(L)=O(\log p)$. The constants $C_5,A_5$ are not made
explicit in Theorem~\ref{thm:integralHasse}, so the implied constant here is unknown and this bound
cannot serve as a computational stopping rule; the rule we actually use is the diameter bound
of Section~\ref{sec:isogenygraphs}, a logically independent statement that happens to agree in order
of magnitude.
\end{corollary}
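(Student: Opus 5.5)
The argument combines Theorem~\ref{thm:local-automatic} with Theorem~\ref{thm:integralHasse}, taking $m=1$ and $n=5$; the codimension condition $n\ge m+3$ holds since $5\ge4$. Let $C_5,A_5$ be the constants that Theorem~\ref{thm:integralHasse} supplies for $n=5$. They depend only on $n$, so they are independent of $p$ and of the class $L$ within the genus. We apply that theorem with target lattice $(\Z^5,q_0)=L$ and represented lattice $(\Z,tz^2)$.

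First, verify the two hypotheses for an integer $t\equiv0,1\pmod 4$.

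\emph{Local primitive representability.} Since $p\equiv3\pmod4$, Theorem~\ref{thm:local-automatic}(i)--(ii) gives primitive representability over every $\Z_\nu$. Over $\mathbb{R}$ it holds because $t>0$ and $L$ is positive definite.

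\emph{Size of the minimum.} For $q(z)=tz^2$ we have $\min_{z\ne0}q(z)=t$. So the condition in Theorem~\ref{thm:integralHasse} reads exactly $t\ge C_5\disc(q_0)^{A_5}=C_5\Delta_L^{A_5}$.

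Under these hypotheses Theorem~\ref{thm:integralHasse} yields a primitive representation $\eta:(\Z,tz^2)\to L$. Its image $\eta(1)=x$ is then a primitive vector of $\Z^5$ with $q(x)=t$, which proves the first assertion.

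Next, specialise to $t=\ell^{2k}$. This is a square, so it is $\equiv0,1\pmod4$. The condition $\ell^{2k}\ge C_5\Delta_L^{A_5}$ is equivalent, after taking $\log_\ell$ and halving, to
\[
k\ge\tfrac12\log_\ell C_5+\tfrac{A_5}{2}\log_\ell\Delta_L .
\]
Since every $L$ in the genus has $\Delta_L=2^9p^2$, we get $\log_\ell\Delta_L=9\log_\ell2+2\log_\ell p$, which gives the displayed formula.

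Finally, take the least integer $k\ge1$ satisfying that bound. Then $r^*(L,\ell^{2k})>0$, so $k_0(L)\le k$ by the definition of $k_0$. For fixed $\ell$, this $k$ is $A_5\log_\ell p+O(1)$, hence $k_0(L)=O(\log p)$.

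The remaining sentences of the statement, that $C_5,A_5$ are not explicit and that the diameter rule is used instead, are remarks and need no proof.

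The only point needing care is matching the normalisation of ``$\disc$'' in Theorem~\ref{thm:integralHasse} with the paper's $\disc(q)=\det(2G_q)$. Any other convention changes $\Delta_L$ only by a bounded power of $2$ times a power of $p$. That change is absorbed into $C_5$ and, for the power of $p$, into $A_5$, so the $O(\log p)$ conclusion is unaffected. I would state the explicit formula with the paper's convention, $\Delta_L=2^9p^2$.
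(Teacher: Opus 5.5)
Your proposal is correct and follows the paper's proof exactly. You get local primitive representability from Theorem~\ref{thm:local-automatic}, use $\min_{z\neq0}tz^2=t$ to turn the hypothesis of Theorem~\ref{thm:integralHasse} (with $m=1$, $n=5$) into $t\ge C_5\Delta_L^{A_5}$ (which also guarantees $t>0$), and then substitute $t=\ell^{2k}$ and $\Delta_L=2^9p^2$ to obtain the displayed bound and $k_0(L)=O(\log p)$.
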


\begin{proof}
By Theorem~\ref{thm:local-automatic}, $(\Z,tz^2)$ is locally primitively
representable by $L$, and $\min_{0\ne z\in\Z}tz^2=t$; apply
Theorem~\ref{thm:integralHasse} with $m=1$, $n=5$, and put $t=\ell^{2k}$,
$\Delta_L=2^9p^2$.
\end{proof}

Therefore, the whole computation is done by the Kannan-Fincke-Pohst enumeration algorithm~\cite{kannan,FinckePohst,SchnorrEuchner}. 

\begin{algorithm}[ht]
\caption{\texttt{FirstLevel}$(G_q,\ell,K_{\max})$: the least $k$ with a primitive vector of norm $\ell^{2k}$}
\label{alg:kzero}
\Input{The Gram matrix $G_q$ of a lattice $L=(\Z^5,q)$ in the genus of $L_5$; the prime $\ell$; a cap $K_{\max}$.}
\Output{$k_0(L)$ and a primitive $x\in\Z^{5}$ with $q(x)=\ell^{2k_0}$, or ``$k_0>K_{\max}$''.}
$(G',U)\leftarrow$\LLL{$G_q$}\;
$\lambda_{1}\leftarrow$\ShortestVector{$G'$} \tcp*{Kannan-Fincke-Pohst}
$k_{1}\leftarrow$ least $k\ge1$ with $\ell^{2k}\ge\lambda_{1}$\; 
\For{$k=k_{1},\dots,K_{\max}$}{
  \ForEach{$z$ in \Enumerate{$G',\ell^{2k}$}}{ 
    \If{$\gcd(z)=1$}{
      $x\leftarrow Uz$\;
      \Verify{$x^{t}G_qx=\ell^{2k}$ and $\gcd(x)=1$} in exact integer arithmetic; abort on failure\;
      \Return{$(k,x)$}
    }
  }
}
\Return{$k_0>K_{\max}$}
\end{algorithm}

\paragraph{Bounds for the search.}
The diameter bounds of Section~\ref{sec:isogenygraphs} give a stopping criterion for our
representation search. For any fixed $p$ a rigorous bound is obtained by substituting the exact
value $M=\#\Gamma^{SS}_2(2;p)$ into the Chung--Faber--Manteuffel inequality. Once a diameter bound
$\mathsf{Diam}$ is established, it is enough in our search to consider $k\leq \mathsf{Diam}$, and
hence refined Humbert invariant values of the form $2^{2k}=4^k$ with $k\leq \mathsf{Diam}$.

That bound lies far above what the experiments need, and in practice $K_{\max}$ is set from $p$
directly. The small-prime runs of Section~\ref{data} take $K_{\max}=\lceil\ln p\rceil$, which is
$3$ at $p=11$ and $6$ at $p=251$, and decide every level by exhaustive enumeration, so a negative
answer at a given $k$ is a proof rather than a timeout. The $50$-bit run of
Algorithm~\ref{alg:kzero} uses $K_{\max}=\lceil 2\ln p/(3\ln\ell)\rceil=33$, twice the heuristic
value of $k_0$; it is never reached, the largest $k_0$ observed over the $10{,}000$ classes being
$17$.

\subsection{Random primitive representation}

For $L=(\Z^{5},q)$ in the genus of $L_5$ put $F=2G_q\oplus(-2)$, so that
$\det F=-2^{10}p^{2}$ and $v^{t}Fv=2\bigl(q(x)-z^{2}\bigr)$ for
$v=(x,z)\in\Z^{5}\times\Z$. (The factor $2$ makes $F$ even, as the
implementation requires; it does not change the isotropic vectors.)

\begin{proposition}\label{prop:reduction}
\begin{enumerate}
\item[(a)] $v=(x,z)$ is isotropic for $F$ iff $q(x)=z^{2}$; then $z\ne0$.
\item[(b)] If moreover $\gcd(v)=1$ then $\gcd(x)=1$: $x$ is a witness for $N=|z|$.
\item[(c)] Every totally isotropic subspace of $(\Q^{6},F)$ has dimension at most $1$.
\item[(d)] $F$ has an isotropic vector: every class of the genus has a witness.
\end{enumerate}
\end{proposition}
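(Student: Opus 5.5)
Parts (a) and (b) come straight from the definition of $F$, part (c) is a signature count, and part (d) follows from Theorem~\ref{thm:local-automatic} together with the integral local--global principle.

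\emph{Part (a).} Since $v^{t}Fv=2(q(x)-z^2)$, the vector $v$ is isotropic exactly when $q(x)=z^2$. If $z=0$ then $q(x)=0$, so $x=0$ by positive definiteness and hence $v=0$. So a nonzero isotropic $v$ has $z\neq0$.

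\emph{Part (b).} Let $g=\gcd(x)$. Then $g^2\mid q(x)=z^2$, so $g\mid z$ and therefore $g\mid\gcd(v)=1$. Thus $x$ is primitive with $q(x)=N^2$, $N=|z|$, which is exactly the witness condition of Theorem~\ref{SplittingTheorem}.

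\emph{Part (c).} Over $\mathbb{R}$ the form $F$ has signature $(5,1)$, since $2G_q$ is positive definite and the last entry is $-2$. A totally isotropic subspace $W$ satisfies $W\cap(\mathbb{R}^5\oplus0)=0$, because $F$ is positive definite on $\mathbb{R}^5\oplus0$. Hence $\dim W\le 6-5=1$, and the same bound holds over $\Q$.

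\emph{Part (d).} This is the only part with content. The statement to prove is that every class $L$ of the genus has some $N\ge1$ with $N^2$ primitively represented by $q$.

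Theorem~\ref{thm:local-automatic} shows that every square $t=N^2$ is locally primitively representable by $L$. By Corollary~\ref{cor:local-automatic}, every square $t\ge C_5\Delta_L^{A_5}$ is then primitively represented. Taking $N=\ell^k$ with $k$ large gives a primitive $x$ with $q(x)=N^2$, and $v=(x,N)$ is a primitive isotropic vector of $F$.

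The catch is that the constants $C_5,A_5$ are ineffective, so this gives existence and nothing more. That matches the statement, which only asserts existence.

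For the weaker claim that $F$ merely has a nonzero rational isotropic vector, there is a simpler route via Hasse--Minkowski: $F$ is an indefinite form over $\Q$ of rank $6\ge5$, hence isotropic. Clearing denominators and dividing by the gcd then produces a primitive integral isotropic vector, and parts (a) and (b) turn it into a witness. This gives the result for the genus representative and for every class simultaneously, since each class has its own indefinite $F$.

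This second route is the one I would present, because it is effective via Simon's algorithm. I would keep the local--global argument only as a remark. I would also check that dividing by the gcd interacts correctly with part (b): it does, since $\gcd(v)=1$ after division.
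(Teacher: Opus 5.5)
Your proof is correct and matches the paper's. Parts (a)--(c) are the same direct arguments. Your Hasse--Minkowski route for (d) is exactly the Meyer's theorem the paper cites (an indefinite rational form of rank at least $5$ is isotropic), and your clearing-denominators step spells out the passage through (a) and (b) that the paper leaves implicit. The alternative via Corollary~\ref{cor:local-automatic} is also valid and even gives $N=\ell^k$, but it depends on the inexplicit constants of Theorem~\ref{thm:integralHasse} and is not needed for existence. Note also that effectiveness comes from Simon's algorithm as a separate tool, not from the existence argument itself.
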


\begin{proof}
\begin{enumerate}
    \item[(a)]  $q$ is positive definite, so $z=0$ forces $x=0$.
    \item[(b)] $g=\gcd(x)$ gives $g^{2}\mid z^{2}$, so $g\mid\gcd(v)=1$.
    \item[(c)] Over $\mathbb{R}$, $F$ has five positive and one negative square, so a totally
isotropic subspace has dimension at most $1$
\cite[Ch.~IV, \S3.1]{courseinarithmetic}.
    \item[(d)] $F$ is indefinite in six variables: Meyer's theorem
\cite[Ch.~IV, \S3.2, Corollary~2]{courseinarithmetic}.
\end{enumerate}
\end{proof}

Meyer's theorem is not effective. Simon's indefinite LLL \cite{simon} solves $v^{t}Fv=0$ for unimodular $F$; his general algorithm \cite{simondim4} (see \cite{watkins}) reduces an arbitrary $F$, given the factorisation of $\det F$, to that case, and returns a totally isotropic sublattice of maximal dimension, here a single vector, by (c), in time polynomial in the bit size of $F$, the factorisation being the only non-polynomial step \cite[\S8]{simondim4}. Since $\det F=-2^{10}p^{2}$ is known by construction, that step is free at every size of $p$. We use the algorithm as a black box and certify its output ourselves. We note that Proposition~\ref{prop:reduction}(d) asserts only that a witness exists: neither it nor anything else we prove controls the size of the witness that Simon's algorithm returns, and the concentration of $\log_2N$ near $\log_2p$ reported in Section~\ref{data} is an empirical observation.

\begin{algorithm}[ht]
\caption{\texttt{Witness}$(q,p)$: a primitively represented square of one class}
\label{alg:witness}
\Input{$L=(\Z^{5},q)$ in the genus of $L_5$, given by $G_q$; the prime $p$.}
\Output{A primitive $x\in\Z^{5}$ and $N\ge1$ with $q(x)=N^{2}$.}
$F\leftarrow 2G_q\oplus(-2)$\;
$w\leftarrow$\Simon{$F$, $\det F=-2^{10}p^{2}$} \tcp*{one vector, by Proposition~\ref{prop:reduction}(c)}
$v\leftarrow w/\gcd(w)$; write $v=(x,z)$\;
\Verify{$\gcd(v)=1$, $v^{t}Fv=0$, $z\neq0$, $\gcd(x)=1$, $x^{t}G_qx=z^{2}$} in exact integer arithmetic; abort on failure\;
\Return{$(x,|z|)$} \tcp*{a witness, by Proposition~\ref{prop:reduction}(b)}
\end{algorithm}

\subsection{Primitive representation of small square-free integers}
As discussed in Appendix~\ref{RM-Humbert}, the refined Humbert invariant $q_{(\A,\theta)}$ encodes real multiplication by $\mathcal{O}_D$ on a principally polarized abelian surface $(\A,\theta)$: a positive fundamental discriminant $D$ is primitively represented by $q_{(\A,\theta)}$ if and only if there exists an embedding $\mathcal{O}_D\hookrightarrow\End_{\theta}(\A)$, equivalently, if $(\A,\theta)$ lies on the Humbert surface $\mathcal{H}_D$. Computationally, we can directly apply Kannan-Fincke-Pohst enumeration algorithm~\cite{kannan,FinckePohst,SchnorrEuchner}, as in Algorithm~\ref{alg:kzero}, to check if a small square-free integer is primitively represented by a refined Humbert invariant $q_{(\A,\theta)}$ for a given reducible or irreducible polarization.

\begin{algorithm}[ht]
\caption{\texttt{PrimitiveReps}$(G_q,D)$: primitive square-free representations up to $D$}
\label{alg:prim-rm}
\Input{The Gram matrix $G_q$ of a positive definite lattice $L=(\Z^5,q)$; a bound $D\ge2$.}
\Output{The square-free $n$ with $2\le n\le D$ primitively represented by $L$, each with a witness $x\in\Z^5$.}
$S\leftarrow\{2,\ldots,D\}$\;
\For{$d=2,\ldots,\lfloor\sqrt D\rfloor$}{
  delete from $S$ every multiple of $d^2$\;
}
$(G',U)\leftarrow$\LLL{$G_q$}\;
$R\leftarrow\varnothing$ \tcp*{$R$ maps represented targets to witnesses}
\ForEach{$n\in S$}{
  \ForEach{$z$ in \Enumerate{$G',n$}}{
    \If{$\gcd(z)=1$}{
      $x\leftarrow Uz$\;
      \Verify{$x^tG_qx=n$ and $\gcd(x)=1$} in exact integer arithmetic; abort on failure\;
      add $(n,x)$ to $R$\; \textbf{break}\;
    }
  }
}
\Return{$R$}\;
\end{algorithm}

The value $n=1$ is excluded deliberately: by Proposition~\ref{irreducibilitycriteria}, $q_{(\A,\theta)}$ represents $1$ exactly when $\theta$ is reducible, so $n=1$ is a separate test rather than a discarded case. The cost is one enumeration of a ball of radius $\sqrt{D}$ for each surviving $n$; throughout this paper we run Algorithm~\ref{alg:prim-rm} only with $D\le100$.

\section{Applications} \label{applications}
In this section we put the refined Humbert invariant to work. It serves three purposes. First, the
\emph{irreducibility criterion} of Proposition~\ref{irreducibilitycriteria} distinguishes Jacobians
with irreducible principal polarization from product surfaces with reducible polarization. Second,
Kani's splitting theorem (Theorem~\ref{SplittingTheorem}) characterizes $(N,N)$-splittings in terms
of primitive representations of $N^2$ by the refined Humbert invariant $q_{\C}$. Third, primitive
representations of discriminants $D$ detect embeddings of the corresponding real multiplication
orders $\mathcal{O}_D$ into the endomorphism ring. We take these in turn, and in each case work out
what the representation problem says about the isogeny graph: for splittings, a precise relation
between the least level $k_0$ and distance to the split locus, together with the branching that
restricts the walks involved; for real multiplication, a test that needs no walk at all; and for
automorphisms, a comparison against the counts of Ibukiyama--Katsura--Oort. The experiments that
exercise all of this are reported in Section~\ref{data}. Among two-dimensional superspecial isogeny
graphs, Richelot graphs are particularly well suited to explicit computation and already possess a
rich structure, which is why we take them as our main testing ground throughout.

\subsection{Detecting splittings}
Of relevance to isogeny-based cryptography, Costello and Smith's algorithm for the superspecial isogeny problem~\cite{CraigBen} proceeds in two parts. 
The first part performs pseudo-random walks starting from the given  Jacobians to the products of supersingular elliptic curves, obtaining isogenies $\varphi: \mathcal{J(C)} \to E_1 \times E_2$ and  $\varphi' : \mathcal{J(C)'} \to E'_1 \times E'_2 $.
Assuming these walks rapidly approach a uniform distribution, 
it costs $\tilde{O}(p)$ since only an $O(1/p)$ fraction of superspecial abelian surfaces are isomorphic to a product of elliptic curves. The second part uses the Delfs-Galbraith algorithm~\cite{DelfsGalbraith}  to find isogeny paths $E_1 \to E'_1$ and $E_2 \to E'_2$, and then glues them to obtain a path
$\pi : E_1 \times E_2 \to E'_1 \times E'_2 $, costing $\tilde{O}(p^{1/2})$. The final solution is the composition $\Phi := \hat{\varphi}' \circ \pi \circ \varphi$, and the expected running time is $\tilde{O}(p)$, dominated by the first phase.
Motivated by the isogeny-based cryptography discussions above, we aim to give a new method to detect close split surfaces to a randomly chosen vertex in the isogeny graph. It does not depend on any isogeny calculation; it does not have to be on the Richelot isogeny graph, and it gives us a more conceptually general method in theory for any $(N, N)$ split case. On the other hand, as a fallback, we do not know the geometric description of the principally polarized superspecial surface.

We now continue the analysis of this problem with refined Humbert invariants. Let $q_{\C}$ be a refined Humbert invariant of a principally polarized superspecial Jacobian abelian surface. We can always guarantee choosing a random polarization in a way that it leads to a Jacobian variety; this can be done efficiently by computing the refined Humbert invariant $q_{(\A,\theta)}$ by calculations in \cite{edagaurish}, and verifying that it is actually a Jacobian by the irreducibility criterion, Proposition~\ref{irreducibilitycriteria}. We emphasize that we do not know which abelian surface $\mathcal{J(C)}$ is exactly, so it is just a random vertex corresponding to a $(\mathcal{J(C)},\theta_\C)$. By Kani's splitting theorem, see Theorem~\ref{SplittingTheorem}, we have that a primitive representation of $4^k$ by the associated refined Humbert invariant $q_{\C}$ gives rise to a $(2^k,2^k)$-isogeny from $\mathcal{J(C)}$ to a split surface $E \times E'$. The least such exponent is exactly the quantity $k_0$ of Section~\ref{designing}, and it measures a
restricted notion of distance to the split locus, which we now make precise. Write
$\mathcal{S}\subset\Gamma^{SS}_2(2;p)$ for the set of vertices carrying a reducible polarization, and
$d(\cdot,\cdot)$ for the graph distance in $\Gamma^{SS}_2(2;p)$.

\paragraph{Branching for splittings}
The isogeny walks based on Kani's splitting theorem, Theorem~\ref{SplittingTheorem}, are not arbitrary random walks in the isogeny graph. Recall that by Theorem~\ref{SplittingTheorem}, a primitive representation $q_{\C}(x)=N^2$
corresponds to an $(N,N)$-splitting of $\mathcal{J}(C)$. When an $(N,N)$ split isogeny is a composition of lower-degree isogenies, primitiveness implies that every intermediate vertex is a Jacobian surface. Thus, we consider isogeny paths of the form
\[
\mathcal{J(C)}\rightarrow  \mathcal{J}(\C_1)\rightarrow \cdots
\rightarrow \mathcal{J}(\C_k)\rightarrow E_1\times E_2,
\]
where no intermediate vertex $\mathcal{J}(\C_i)$ is a split surface. In particular, the split surface can only be reached at the final vertex. Hence, the isogeny walks here are a restricted class of isogeny walks.

Moreover, when $N=\ell^k$ and the corresponding splitting is factored into $(\ell,\ell)$-isogenies, primitiveness implies that the kernel of every intermediate isogeny is isomorphic to $(\Z/\ell^i\Z)^2$. Consequently, each isogeny after the first is a good extension of the preceding one in the sense of \cite{CastryckDecruMultiradical}. Equivalently, if $\A_{i-1}\to\A_i$ is fixed, then the kernel of the next isogeny $\A_i\to\A_{i+1}$ intersects the kernel of the dual isogeny $\A_i\to\A_{i-1}$ trivially.
This condition gives a uniform reduction in the number of possible continuations of the walk. The full $(\ell,\ell)$-isogeny graph has weighted out degree $d_\ell=(\ell+1)(\ell^2+1)$, so there are $d_\ell^k$ unrestricted isogeny walks of length $k$. The first isogeny has $d_\ell$ possible kernels. Once an incoming isogeny has been fixed, exactly $\ell^3$ of the $d_\ell$ outgoing $(\ell,\ell)$-isogenies are good extensions; see \cite[Lemma~2.1]{Genus2hashfunction}, and which is also the branching used in the good-extension digraph of \cite{DecruReijnders}. Hence the number of isogeny walks of length $k$ in which every isogeny after the first isogeny is a good extension is $\ell^{3(k-1)}d_\ell$.

Therefore, among all unrestricted isogeny walks of length $k$, the proportion satisfying the good-extension condition is $\left(\frac{\ell^3}{d_\ell}\right)^{k-1}=\left(\frac{\ell^3}{(\ell+1)(\ell^2+1)}\right)^{k-1}$. For the case of $\ell=2$, this proportion is $\left(\frac{8}{15}\right)^{k-1}$.
The walks associated with splittings derived from refined Humbert invariants $q_{\C}$ satisfy an additional restriction: every intermediate vertex must remain a Jacobian, while a split surface may occur only at the final vertex. Thus $\ell^3$ is the number of good extensions available after a fixed incoming isogeny, but not every such extension is admissible for a primitive splitting. At an intermediate step, only good extensions with Jacobian codomain are allowed, whereas at the final step the codomain must be a product surface. Hence the actual branching for the splitting walks considered here is bounded above by $\ell^3$ at every step after the first, and may be strictly smaller because of the requirement that the walk remain in the Jacobian locus until its final edge.

\begin{lemma}\label{lem:k0-distance}
Let $(\A,\theta)$ be a principally polarized superspecial abelian surface with irreducible
polarization and refined Humbert invariant $q_{(\A,\theta)}$, and let $k_0=k_0(\A,\theta)$ be the
least $k\geq1$ such that $q_{(\A,\theta)}$ primitively represents $4^{k}$. Then
\begin{enumerate}
\item[(i)] $k_0$ is the least length of a walk from $(\A,\theta)$ to $\mathcal{S}$ in
$\Gamma^{SS}_2(2;p)$ in which every edge after the first is a good extension of its predecessor and
no intermediate vertex lies in $\mathcal{S}$;
\item[(ii)] consequently $d\bigl((\A,\theta),\mathcal{S}\bigr)\leq k_0$, with equality if and only if
some geodesic from $(\A,\theta)$ to $\mathcal{S}$ is a chain of good extensions; and
\item[(iii)] equality holds in (ii) whenever $k_0\leq2$.
\end{enumerate}
\end{lemma}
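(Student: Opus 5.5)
The plan is to translate everything through Kani's splitting theorem, Theorem~\ref{SplittingTheorem}, specialised to $N=2^k$. A primitive representation $q_{(\A,\theta)}(x)=4^k$ is equivalent to a $(2^k,2^k)$-isogeny $\phi\colon\A\to E_1\times E_2$ onto a surface with product (reducible) polarization, i.e.\ a vertex of $\mathcal{S}$. Here $\ker\phi\cong(\Z/2^k\Z)^2$ is maximal isotropic in $\A[2^k]$. So the first task is to set up a dictionary between such kernels and walks of length $k$ in $\Gamma^{SS}_2(2;p)$ whose edges after the first are good extensions.

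Given $K=\ker\phi\cong(\Z/2^k\Z)^2$, I filter it by $K_i:=K[2^i]\cong(\Z/2^i\Z)^2$, $i=0,\dots,k$. Each $K_i$ is isotropic for the $2^i$-Weil pairing, since it equals $2^{k-i}K$ and the pairing is compatible. The successive quotients $K_{i+1}/K_i$ are maximal isotropic in $(\A/K_i)[2]$, so $\phi$ factors as $k$ Richelot isogenies. Conversely, a chain of $(2,2)$-isogenies has composite kernel isomorphic to $(\Z/2^k\Z)^2$ exactly when no step meets the kernel of the dual of the previous step, i.e.\ when every step after the first is a good extension. This is the content of \cite{CastryckDecruMultiradical} (compare \cite[Lemma~2.1]{Genus2hashfunction}), and I would cite it rather than reprove it. This dictionary is the step I expect to need the most care, in particular the isotropy and maximality of the composite kernel and the compatibility of the polarizations along the chain; it is also the only non-formal input.

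For (i), let $\phi$ be a $(2^{k_0},2^{k_0})$-splitting and factor it as above. This gives a good-extension walk of length $k_0$ ending in $\mathcal{S}$. If some intermediate vertex $\A/K_i$ with $1\le i<k_0$ lay in $\mathcal{S}$, the truncated chain would be a $(2^i,2^i)$-splitting. By Theorem~\ref{SplittingTheorem} this gives a primitive representation of $4^i$, contradicting the minimality of $k_0$. The case $i=0$ is excluded because $\theta$ is irreducible, by Proposition~\ref{irreducibilitycriteria}. Conversely, any good-extension walk of length $k$ ending in $\mathcal{S}$ composes to a $(2^k,2^k)$-splitting, so $q$ primitively represents $4^k$ and $k\ge k_0$. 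Hence $k_0$ is exactly the minimal length in (i); the condition on intermediate vertices is automatically satisfied by a minimal walk.

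For (ii), the walk from (i) gives $d((\A,\theta),\mathcal{S})\le k_0$ at once. If some geodesic to $\mathcal{S}$ is a good-extension chain, then its intermediate vertices avoid $\mathcal{S}$ (otherwise it would not be a geodesic to $\mathcal{S}$). So it is admissible in (i), giving $k_0\le d$ and hence equality. Conversely, if $d=k_0$, the minimal walk from (i) is itself such a geodesic. For (iii), irreducibility gives $d\ge1$, so $k_0=1$ forces $d=1$. If $k_0=2$ and $d=1$, the single edge into $\mathcal{S}$ is a $(2,2)$-splitting, i.e.\ a primitive representation of $4$ by Theorem~\ref{SplittingTheorem}. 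That would give $k_0=1$, a contradiction, so $d=2$.
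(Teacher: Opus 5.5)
Your route is the paper's: Theorem~\ref{SplittingTheorem} identifies primitive representations of $4^k$ with $(2^k,2^k)$-splittings, a splitting factors into Richelot steps that are good extensions after the first, and (ii) and (iii) are then formal. Your (ii) is slightly more careful than the paper's, since you check that a geodesic to $\mathcal{S}$ has no intermediate vertex in $\mathcal{S}$ and is therefore admissible in (i).

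One step, however, is false as stated: the converse that \emph{any} good-extension walk of length $k$ ending in $\mathcal{S}$ composes to a splitting, and hence to a primitive representation of $4^k$. It fails for walks that pass through $\mathcal{S}$. Take a $(2,2)$-splitting $\A\to E\times E'$ coming from a degree-$2$ subcover $f\colon\C\to E$, whose dual kernel is the graph of some $\psi\colon E[2]\to E'[2]$. The product kernel $\langle(P,0),(0,P')\rangle$ with $P'\neq\psi(P)$ is a good extension, and the composite $\A\to E/\langle P\rangle\times E'/\langle P'\rangle$ has kernel $\cong(\Z/4\Z)^2$. But its projection $\C\to E/\langle P\rangle$ factors through $f$, so it is not an elliptic subcover in Kani's sense, and the norm-$16$ vector it produces is twice a norm-$4$ vector. (Here $k_0=1$, so your inequality survives, but the inference does not.) Theorem~\ref{SplittingTheorem} can be invoked only for walks with no intermediate vertex in $\mathcal{S}$. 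That is why the paper states its converse for walks having both properties.

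The repair is short. State the converse only for such admissible walks. For an arbitrary good-extension walk, truncate at its first visit to $\mathcal{S}$; this still gives $k\ge k_0$. In your forward step, take $i$ \emph{least} with $\A/K_i\in\mathcal{S}$, so that the truncated chain is admissible before you apply the converse to it.

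With this change, your forward argument differs from the paper's only in its justification. The paper attributes the avoidance of $\mathcal{S}$ to primitivity, which is a claim about every such splitting. You derive it from minimality of $k_0$, which is all (i) needs but puts the whole weight on the admissible converse. Neither you nor the paper proves that converse in detail. One way to do it:
\begin{enumerate}
\item[1.] Suppose $\pi_1\circ\varphi$ factored as $g\circ f_*$ with $f$ minimal and $g$ a nontrivial isogeny.
\item[2.] Then $\hat\varphi$ restricted to $E_1\times 0$ is $f^*\circ\hat g$, so $\ker\hat\varphi\cong(\Z/2^k\Z)^2$ contains some $(P,0)$ of order $2$.
\item[3.] So $(P,0)$ lies in $\ker\hat\varphi[2]$, which is the kernel of the dual of the last step.
\item[4.] A maximal isotropic subgroup of $(E_1\times E_2)[2]$ containing $(P,0)$ has product type.
\item[5.] Hence the penultimate vertex would lie in $\mathcal{S}$, or would be $(\A,\theta)$ itself when $k=1$; either way this is a contradiction.
\end{enumerate}
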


\begin{proof}
(i) By Theorem~\ref{SplittingTheorem}, a primitive representation of $4^{k}$ by $q_{(\A,\theta)}$ is
equivalent to a $(2^{k},2^{k})$-isogeny $\varphi:\A\to E_1\times E_2$, that is, one whose kernel is
maximal isotropic in $\A[2^{k}]$ and hence isomorphic to $(\Z/2^{k}\Z)^2$. Factoring $\varphi$ into
$k$ successive $(2,2)$-isogenies, the condition on $\ker\varphi$ says precisely that each step after
the first is a good extension of the preceding one, and primitivity says that no intermediate
codomain lies in $\mathcal{S}$; both points are the content of the branching discussion above.
Conversely, a walk with these two properties composes to an isogeny with kernel
$(\Z/2^{k}\Z)^2$, hence to a $(2^{k},2^{k})$-splitting, hence to a primitive representation of
$4^{k}$. Minimality of $k_0$ now gives the claim.

(ii) The walk furnished by (i) has length $k_0$ and ends in $\mathcal{S}$, so
$d((\A,\theta),\mathcal{S})\leq k_0$. If some geodesic is a chain of good extensions then its length
is at least $k_0$ by the minimality in (i), and at most $k_0$ because it is a geodesic; so the two
agree. Conversely, if $d((\A,\theta),\mathcal{S})=k_0$ then the walk furnished by (i) has length
exactly $d((\A,\theta),\mathcal{S})$, so it is itself a geodesic --- and it is a chain of good extensions.

(iii) By Theorem~\ref{SplittingTheorem}, $q_{(\A,\theta)}$ primitively represents $4$ precisely when
$\A$ is $(2,2)$-split, that is, precisely when $\A$ has a neighbor in $\mathcal{S}$; so $k_0=1$ if
and only if $d((\A,\theta),\mathcal{S})=1$. Since $\theta$ is irreducible,
$(\A,\theta)\notin\mathcal{S}$ and hence $d((\A,\theta),\mathcal{S})\geq1$. So if $k_0=2$ then
$d((\A,\theta),\mathcal{S})\neq1$, while $d((\A,\theta),\mathcal{S})\leq2$ by (ii); the two
therefore agree.
\end{proof}

Whether the inequality in (ii) can be strict we leave open: by (iii) any witness must have $k_0\geq3$, and deciding the question requires the graph distance $d$, which the method of this paper does not compute.

Thus $k_0$ is an upper bound for the graph distance to the split locus, and is exactly the distance
measured along walks that correspond to a single $(2^{k},2^{k})$-isogeny. The latter is the
quantity of cryptographic interest, since it is $(N,N)$-isogenies, and not arbitrary walks, that
one computes. In fact, the set of all primitively represented $4^k$ for exponents $k$ describes the
set of split surfaces reachable from $\mathcal{J(C)}$ by isogenies of degree $(2^k,2^k)$. Therefore, we restrict our attention to values of $k$ up to a diameter bound $k_{\mathrm{diameter}}$ for the isogeny graph of principally polarized superspecial abelian surfaces. By the discussion in Section~\ref{sec:isogenygraphs}, the Chung--Faber--Manteuffel estimate gives $k_{\mathrm{diameter}}=4.20\log_2(p)+O(1)$ unconditionally, and $k_{\mathrm{diameter}}=3\log_2 (p)+O(1)$ under \cite[Conjecture~4.10]{floritsmith};  for any fixed $p$ a rigorous bound is obtained by substituting the exact value $M=\#\Gamma^{SS}_2(2;p)$ into that inequality. For the small primes of \cite[Appendix~A]{floritsmith} the diameter has been computed exactly, see Table~\ref{tab:richelot_diameter_p_11mod12}, and we use the tabulated value in place of the bound whenever it is available. 

Our data is obtained differently from that of \cite{floritsmith}: they construct the full Richelot isogeny graph, computing the surfaces via Igusa--Clebsch invariants and the edges via compact Richelot isogeny formulas, whereas we read the same information off the refined Humbert invariant. 

\begin{remark}
    The difference between our method and the results in \cite{Splitting} is one of input: we use neither the equations of the genus $2$ curve, which would be obtained by computing Igusa invariants, nor an explicit chain of isogenies. We emphasize that we simply chose a random Jacobian in the isogeny graph, and that we do not know the equations of the curve $\C$ of genus $2$. We make no claim of a runtime advantage: we have not benchmarked the two approaches against one another, and any such comparison would depend on the regime of $N$, since \cite{Splitting} targets prescribed $N\leq11$ whereas $N$ is determined intrinsically here.
\end{remark}

\begin{remark}
It is an interesting question whether one can compute the refined Humbert invariant $q_{\C}$ for a given model of curve $\C$ of genus $2$.
The question of finding an algorithm to determine $q_{\C}$ for a given curve $\C$ is mostly open. Note that there is no such algorithm to compute the quintic refined Humbert invariant $q_{\C}$ for a given model of $\C$. The existence of such an algorithm would be a powerful method for isogeny graphs and isogeny problems, yet it is challenging to solve in the general case. Evidence that this difficulty is intrinsic is given by K{\i}r{\i}ml{\i}--Martindale~\cite{kirimlimartindale}, who show that in the settings of SQIsign and CSIDH, and under GRH, the computational isogeny problem is polynomially equivalent to the computational refined Humbert invariant problem; that work also surveys what is known about computing refined Humbert invariants.
\end{remark}

\subsection{Detecting endomorphisms}
For supersingular elliptic curves, computing endomorphisms is essentially as hard as finding isogenies ~\cite{wesolowski2021supersingularisogenypathendomorphism};
Wesolowski showed a GRH-based polynomial-time reduction between the isogeny problem over $\F_{p^2}$ and the endomorphism ring problem. Further, Page--Wesolowski showed that finding a single nontrivial $\varphi \in \End(E)$ is equivalent to solving the supersingular isogeny problem~\cite{welowoski-page}.
This naturally raises the question of whether a similar equivalence holds in dimension~$2$, yet there is neither a widely accepted algorithm nor a mature complexity analysis for computing endomorphism rings of superspecial principally polarized abelian surfaces. 
We now show that the refined Humbert invariant gives a direct arithmetic test for real multiplication, and use it to estimate how common RM is among superspecial surfaces. Recall from Appendix~\ref{RM-Humbert} that every $\mathcal{H}(q)$ is contained in any Humbert surface $\mathcal{H}_N$ that $q$ primitively represents, in arbitrary characteristic by \cite[Definition 8, Proposition 12]{KaniGeneralizedHumbert}.

We investigate RM orders experimentally using the complete sets of principal polarizations for small primes up to $191$, and, separately, by sampling $100{,}000$ random principal polarizations for each prime $227\leq p\leq1619$ and determining the primitively represented square-free discriminants $D\leq100$.
\subsubsection{Frequency of real multiplication with refined
Humbert invariants $q_{(\A,\theta)}$}
We describe an experiment to estimate the frequency of principally polarized superspecial abelian surfaces admitting real multiplication by a prescribed quadratic order $\mathcal{O}_D$. 
For each sampled principally polarized superspecial abelian surface $v_i=(\A,\theta_i)$ over $\overline{\mathbb{F}}_p$, we compute its quintic refined Humbert invariant $q_i=q_{(\A,\theta_i)}$.

Fix a positive discriminant $D$ corresponding to a real quadratic order $\mathcal{O}_D$. We test whether $q_i$ primitively represents $D$, namely, whether there exists a primitive vector $x$ such that $q_i(x)=D$.
By the primitive-representation criterion, see Theorem~\ref{RMtheorem}, this detects an embedding of the corresponding real quadratic order $\mathcal{O}_D$ into the additive subgroup $\End_{\theta_i}(\A)$ consisting of endomorphisms fixed by the Rosati involution with respect to $\theta_i$.

Thus, RM detection is carried out directly from $q_i$, without requiring genus $2$ curve equations, Igusa invariants, explicit equations for describing the Humbert surfaces, or the computation of Richelot isogenies or  $(\ell,\ell)$-isogenies.

A further advantage is that a single sampled refined Humbert invariant $q_i$ can be tested simultaneously for many discriminants $D$. Once $q_i$ is computed, we determine the set of positive discriminants $D\leq D_{\max}$ that it primitively represents. The same collection of sampled principally polarized abelian surfaces therefore yields estimates for all discriminants in the prescribed range and allows the resulting data to be grouped according to arithmetic invariants of $\mathcal{O}_D$, such as its class number or narrow class number. We further emphasize that our method does not require the class number or narrow class number of $\mathcal{O}_D$ to be equal to $1$.
 
Motivated by the square-discriminant case \cite[Section~6.2]{floritsmith}, the following heuristic on the distribution of RM vertices is extended in \cite[Section~8.2]{detectionofendomorphisms} to an arbitrary discriminant $D$ by introducing a function $\widetilde{\mathfrak{D}}_D$. The expected number of steps required for a random walk in $\Gamma_2^{SS}(2;p)$ to reach an abelian surface admitting RM by $\mathcal{O}_D$ is then predicted to be
\begin{equation}\label{RMheuristic}
    \frac{p}{\widetilde{\mathfrak{D}}_D}+O(1),
\end{equation}
where $\widetilde{\mathfrak{D}}_D=O(D^{3/2})$ and, for square discriminants $D=N^2$, it satisfies
$\widetilde{\mathfrak{D}}_{N^2}=5\mathfrak{D}_N$ with $\mathfrak{D}_N$ denoting the number of maximal isotropic $(N,N)$-subgroups of the $N$-torsion of a principally polarized abelian surface. A heuristic for $\widetilde{\mathfrak{D}}_D$ is given in \cite{detectionofendomorphisms} as follows.
Let $D$ be a fundamental discriminant such that $\mathcal{O}_D$ has narrow class number one, and let $K=\operatorname{Frac}(\mathcal{O}_D)$. As explained in \cite[\S 8.2.1]{detectionofendomorphisms}, combining \cite[Theorem~2.5.35]{Nicole2005} with the Eichler mass formula \cite[Theorem~26.1.5]{voight} gives the estimate
$\frac{\zeta_K(2)D^{3/2}}{8\pi^4}p^2+O(p)$
for the number of superspecial principally polarized abelian surfaces equipped with an embedding
$\mathcal{O}_D\hookrightarrow\End(\A)$, with the discrepancy from the unweighted count governed by the Eichler class number formula. For $D\ll p$, it is further expected that almost every surface admitting RM by $\mathcal{O}_D$ gives rise to exactly two such embeddings. This leads to the following heuristic $\widetilde{\mathfrak{D}}_D
=\frac{1}{2}\frac{360\zeta_K(2)}{\pi^4}D^{3/2}.
$

\begin{remark}  
In contrast to the random-walk experiment of~\cite{detectionofendomorphisms}, we estimate the occurrence of real multiplication directly from refined Humbert invariants. Rather than starting from a fixed vertex of the superspecial $(2,2)$-isogeny graph and measuring the first hitting time of the locus admitting RM by $\mathcal{O}_D$, we sample principally polarized superspecial abelian surfaces $(\A,\theta)$, compute their refined Humbert invariants $q_{(\A,\theta)}$, and test whether $D$ is primitively represented. This directly estimates the density $\delta_D(p)=\Pr(q_{(\A,\theta)}\text{ primitively represents }D)$ of vertices admitting RM by $\mathcal{O}_D$. In~\cite{detectionofendomorphisms}, $\mu_D$ denotes the mean number of steps in $\Gamma_2^{SS}(2;p)$ required to encounter a principally polarized abelian surface with RM by $\mathcal{O}_D$. Under the heuristic that a sufficiently mixed Richelot walk samples from the same distribution, one expects $\mu_D\simeq \delta_D(p)^{-1}$ and hence $p/\mu_D\simeq p\,\delta_D(p)$. Our approach neither uses explicit curve equations nor a random walk in the Richelot isogeny graph; instead, the main issue is to ensure that the sampled polarizations induce the desired distribution on principally polarized isomorphism classes.
\end{remark}

\subsection{Automorphism groups}

Computing refined Humbert invariants detects genus $2$ Jacobians with extra automorphisms and thereby identifies special vertices and their local neighborhoods in the isogeny graph. This viewpoint was investigated experimentally for Richelot graphs in \cite{floritsmith} using reduced automorphism groups. By Proposition~\ref{auto-rhi-relation}, the refined Humbert invariants determine the automorphism group of the corresponding principally polarized abelian surface. 

Here, we illustrate some experiments verifying the relationship between the automorphism groups of curves $\C$ of genus 2 and their associated refined Humbert invariants $q_{\C}$. Note that the list of possibilities for $\Aut(\C)$ for any genus $2$ curve is given in the first row of Table~\ref{Tab:Autr4}, see~\cite[Theorem 2]{shaska2004elliptic}, \cite{bolza}. Recall from Proposition~\ref{auto-rhi-relation} that if $\iota(\Aut(\C))$ denotes the number of involutions of the group $\Aut(\C)$, then we have that $\iota(\Aut(\C))-1=r_4(q_{\C})$ by~\cite[Proposition 26]{KaniCurvesAbelianSurfaces}. 
Although the statement of Theorem~\ref{Ibuyikama-auto-count} is given by reduced automorphism groups $\mathrm{RA(\C)}$, we converted them back to $\Aut(\C)$ in Table~\ref{auto-experiment}.

The polarizations used here are not sampled: we take them from the exhaustive enumeration of
\cite[Algorithm~2]{edagaurish}, which as recalled in Section~\ref{designing} is complete for
$p\leq191$ and, at $p=227,239,251$, misses only reducible polarizations, so that the irreducible
ones --- the case of interest here --- are complete throughout the range. For the primes $p \in \{11,\allowbreak
23,\allowbreak
47,\allowbreak
59,\allowbreak
71,\allowbreak
83,\allowbreak
107,\allowbreak
131,\allowbreak
167,\allowbreak
179,\allowbreak 191,\allowbreak 227,\allowbreak239,\allowbreak251\}$ we compute the refined Humbert invariant of every polarization, discard those that represent $1$,
and classify the remainder up to isometry. These are the refined Humbert invariants $q_{\C}$
corresponding to principally polarized superspecial surfaces (with irreducible polarizations)
$(\A,\theta)=(\mathcal{J(C)},\theta_{\C})$.

\begin{table}[!ht]
    \caption{Automorphism data for the principal polarizations of $\A=E\times E$, enumerated with
    \cite[Algorithm 2]{edagaurish}. The count columns are: $\#\Theta=\mathbf{H}(p)$, the principal
    polarizations up to isomorphism; $\#\mathrm{RHI}_{\mathrm{irred}}=\mathbf{H}-\frac{\mathbf{h}(\mathbf{h}+1)}{2}$,
    those that are irreducible; $\#\mathrm{RHI}_{\mathrm{iso}}$, the number of distinct refined Humbert
    invariants they realize up to isometry; and $\#\Gen{q_5}_{\text{not }1}$, the number of classes in the
    genus of $q_5$ that do not represent $1$. The last two columns agree in every row, so the irreducible
    polarizations realize \emph{every} class of the genus that does not represent $1$. Within each pair,
    the first entry is the number of \emph{polarizations} predicted by Theorem~\ref{Ibuyikama-auto-count} (the
    column sums to $\#\mathrm{RHI}_{\mathrm{irred}}$), and the second the number of \emph{isometry classes}
    observed (summing to $\#\mathrm{RHI}_{\mathrm{iso}}$); these count different objects, so they coincide
    only where little collapse occurs. The $C_{10}$ entries are marked by $*$ because $r_4$ cannot separate
    $C_2$ from $C_{10}$; see the discussion after the table.}
    \label{auto-experiment}
    \centering
\begin{adjustbox}{max width=\textwidth}
\begin{tabular}{cccccccccccc}
    \toprule
    \multirow{2}{*}{$p$} & \multirow{2}{*}{$\#\Theta$} & \multirow{2}{*}{$\#\underset{\text{irred}}{\text{RHI}}$} & \multirow{2}{*}{$\#\underset{\text{iso}}{\text{RHI}}$} & \multirow{2}{*}{$\#\underset{\text{not 1}}{\Gen{q_5}}$} & \multicolumn{7}{c}{$\#\Aut(\C)$ type: (\texttt{Theorem~\ref{Ibuyikama-auto-count} bound}, \texttt{Table~\ref{Tab:Autr4} count})} \\
    \cmidrule(lr){6-12} 
    & & & & & $\text{C}_2$ & $\text{C}_{10}$ &$\text{C}_2\times \text{C}_2$ & $\text{D}_4$ & $\text{D}_6$ & $\text{C}_3\rtimes \text{D}_4$ & $\GL_2(\mathbb{F}_3)$\\ 
    \midrule
    11 & 5 & 2 & 2 & 2 & $(0, 0)$ & $(0, 0)$ & $(0,0)$ & $(0,0)$ & $(1,1)$ & $(1,1)$ & $(0,0)$ \\ 
    23 & 16 & 10 & 9 & 9 & $(0,0)$ & $(0,0)$ & $(5,4)$ & $(1,1)$ & $(2,2)$ & $(1,1)$ & $(1,1)$ \\
    47 & 72 & 57 & 43 & 43  & $(14,9)$ & $(0,0)$ & $(31,23)$ & $(4,4)$ & $(6,5)$ & $(1,1)$ & $(1,1)$ \\
    59 & 125 & 104 & 74 & 74 & $(35,25)$ & $(1,1^*)$ & $(52,35)$ & $(6,5)$ & $(9,7)$ & $(1,1)$ & $(0,0)$ \\
    71 & 198  & 170 & 116 & 116 & $(70,42)$ & $(0,0)$ & $(81,58)$ & $(7,6)$ & $(10,8)$ & $(1,1)$ & $(1,1)$ \\
    83 & 296 & 260 & 165 & 165 &  $(123,80)$ & $(0,0)$ & $(114,67)$ & $(9,7)$ & $(13,10)$ & $(1,1)$ & $(0,0)$ \\
    107 & 581 & 526 & 311 & 311 & $(296, 178)$ & $(0,0)$ & $(200,111)$ & $(12,8)$ & $(17,13)$ & $(1,1)$ & $(0,0)$\\ 
    131 & 1008  & 930  & 554 & 554 & $(583,340)$ & $(0,0)$ & $(310,186)$ & $(15,11)$ & $(21,16)$ & $(1,1)$ & $(0,0)$ \\ 
    167 & 1978  & 1858 & 1058 & 1058 & $(1290,709)$ & $(0,0)$ & $(521,313)$ & $(19,15)$ & $(26,19)$ & $(1,1)$ & $(1,1)$ \\ 
    179 & 2404 & 2268 & 1266 & 1266 & $(1614,894)$ & $(1,1^*)$ & $(602,336)$ & $(21,14)$ & $(29,20)$ & $(1,1)$ & $(0,0)$ \\
    191 & 2886 & 2733 & 1540 & 1540 & $(1988,1079)$ & $(0,0)$ & $(691,420)$ & $(22,17)$ & $(30,22)$ & $(1,1)$ & $(1,1)$ \\ 
    227 & 4712 & 4502 & 2425 & 2425 & $(3447,1848)$ & $(0,0)$ & $(990,533)$ & $(27,18)$ & $(37,25)$ & $(1,1)$ & $(0,0)$ \\ 
    239 & 5460  & 5229 & 2869 & 2869 & $(4057,2164)$ & $(1,1^*)$ & $(1103,654)$ & $(28,21)$ & $(38,27)$ & $(1,1)$ & $(1,1)$  \\ 
    251 & 6281 & 6028 & 3271 & 3271  & $(4736,2541)$ & $(0,0)$ & $(1220,680)$ & $(30,21)$ & $(41,28)$ & $(1,1)$ & $(0,0)$ \\ 
    \bottomrule
    \end{tabular}
  \end{adjustbox}
\end{table} 

Two comparisons in Table~\ref{auto-experiment} are meaningful, and one is not. First, each column sums to
the total it should: the predicted counts sum to $\#\mathrm{RHI}_{\mathrm{irred}}$ and the observed ones
to $\#\mathrm{RHI}_{\mathrm{iso}}$, in every row. Second, for the rare types $D_4$, $D_6$,
$C_3\rtimes D_4$ and $\GL_2(\F_3)$ the two entries of a pair are equal or nearly so, because almost no
collapse occurs there; in particular the unique $C_3\rtimes D_4$ curve, and the unique $\GL_2(\F_3)$
curve whenever Theorem~\ref{Ibuyikama-auto-count} predicts one, are recovered at every prime tested. By
contrast, no agreement should be expected in the $C_2$ and $C_2\times C_2$ columns: at $p=251$ the $C_2$
entry falls from $4736$ predicted polarizations to $2541$ observed isometry classes, which measures the
collapse and not a discrepancy, since the two numbers count different objects. A further observation is
that $\#\mathrm{RHI}_{\mathrm{iso}}$ equals $\#\Gen{q_5}_{\text{not }1}$ in every row: the irreducible
polarizations of $E\times E$ realize the full genus of $q_5$ away from the classes representing $1$. The entries marked with a star require a caveat. By Table~\ref{Tab:Autr4} we have $r_4(q_{\C})=0$ for both $\Aut(\C)\cong C_2$ and $\Aut(\C)\cong C_{10}$, so the refined Humbert invariant cannot distinguish these two cases; the $C_{10}$ entries are therefore not an independent measurement, but the count of Theorem~\ref{Ibuyikama-auto-count} subtracted from the total number of classes with $r_4=0$. More generally, the counts we obtain are smaller than the predicted ones because two different principally polarized surfaces $(\mathcal{J(C)},\theta_{\C})$ and $(\mathcal{J(C')},\theta_{\C'})$ can share a refined Humbert invariant $q_{\C}\sim q_{\C'}$ up to equivalence. 

An experiment of this kind was carried out in \cite{floritsmith}, where they construct Richelot isogeny graphs and compute chains of isogenies and Igusa-Clebsch invariants. In contrast, our approach does not require the explicit construction or navigation of the full isogeny graph of principally polarized superspecial abelian surfaces, but we do not know which automorphism group belongs to which principally polarized superspecial surface. Instead, we compute the associated refined Humbert invariants directly, thereby extracting the necessary information without enumerating the vertices and edges of the underlying graph. Moreover, the proposed method is generic in the sense that it is independent of any auxiliary choice of a particular $(\ell,\ell)$-isogeny graph.  Given that the set of polarizations is complete, we can also count curves  $\C$ of genus 2 with specified $\Aut(\C)$ just by checking the representation of $4$ by the corresponding quintic refined Humbert invariant $q_{\C}$.

\section{Data} \label{data}
\subsection{The experiment with all the polarizations for small primes}
\paragraph{Splittings} We start with small characteristic primes $p$ for which all principal polarizations are known, using the verified polarizations from \cite{edagaurish}. This enables us to analyze the full splitting behavior of $(2^k,2^k)$-isogeny walks in the superspecial Richelot isogeny graph $\Gamma_2^{SS}(2;p)$ and of $(3^k,3^k)$-isogeny walks in the superspecial $(3,3)$-isogeny graph $\Gamma_2^{SS}(3;p)$, for primes $11\leq p\leq 191$. The same experiment can be carried out efficiently for other small primes $\ell$, particularly for $\ell<31$.

In the first experiment in Figure~\ref{fig:splitting-experiments}(a), we record, for each refined Humbert invariant in the sample and for each $\ell\in\{2,3\}$, the smallest exponent $k_{\min}$ for which $\ell^{2k}$ is primitively represented. The resulting distributions are shown as functions of $p$. For the smaller primes, primitive representation frequently occurs already at $k=1$, whereas for larger $p$ the mass gradually shifts toward $k_{\min}=2$ and $k_{\min}=3$. This shift is more pronounced for $\ell=2$: throughout the larger-prime range, the $\ell=3$ experiment places a substantially larger proportion of the forms at $k_{\min}=1$ or $2$. Thus, although the smallest represented exponent tends to increase with $p$, the computations indicate that primitive representations of powers of $3$ are typically attained at smaller exponents than the corresponding powers of $2$. This is what the branching count of Section~\ref{applications} predicts: the $(\ell,\ell)$-isogeny graph has weighted degree $d_\ell=(\ell+1)(\ell^2+1)$, so $d_2=15$ against $d_3=40$, and the number of good extensions available after a fixed incoming isogeny is $\ell^3$, so $8$ against $27$. Balls around a vertex therefore grow considerably faster at $\ell=3$, and the split locus is met at a smaller exponent, which is precisely the shift seen between the two panels.

The second experiment in Figure~\ref{fig:splitting-experiments}(b) measures the extent to which each refined Humbert invariant represents the entire range of tested powers, rather than only the first one that occurs. For each form, we count the tested exponents $k$ for which $\ell^{2k}$ fails to be primitively represented, with a separate class for forms representing every tested value $1\leq k\leq K_{\max}$; here $K_{\max}=\lceil\ln p\rceil$ as in Section~\ref{sec:primreps}, which is $3$ at $p=11$ and $6$ at $p=251$. The proportion representing all tested exponents decreases as $p$ grows, particularly for $\ell=2$, while the corresponding proportion for $\ell=3$ remains consistently larger. Nevertheless, the overwhelming majority of forms represent all but at most one or two of the tested exponents. This experiment, therefore, complements the $k_{\min}$-distribution by showing that a small first represented exponent does not by itself determine the subsequent representation pattern, and by quantifying how close the observed forms are to representing the full tested range.

\begin{figure}[hbt]
    \centering

    \begin{minipage}[t]{0.49\textwidth}
        \centering
        \includegraphics[width=\linewidth]{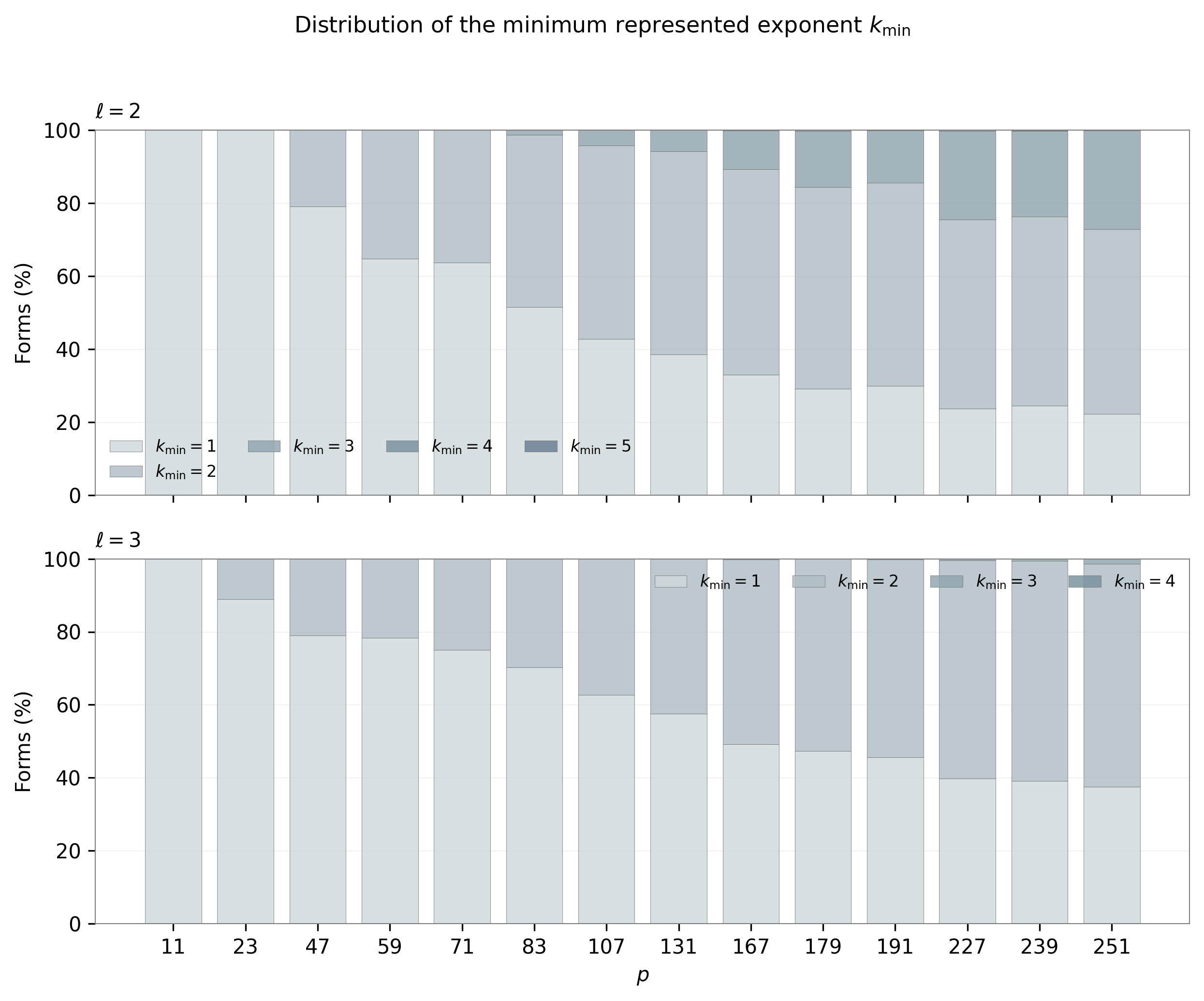}

        \smallskip
        \textbf{(a)}
    \end{minipage}
    \hfill
    \begin{minipage}[t]{0.49\textwidth}
        \centering
        \includegraphics[width=\linewidth]{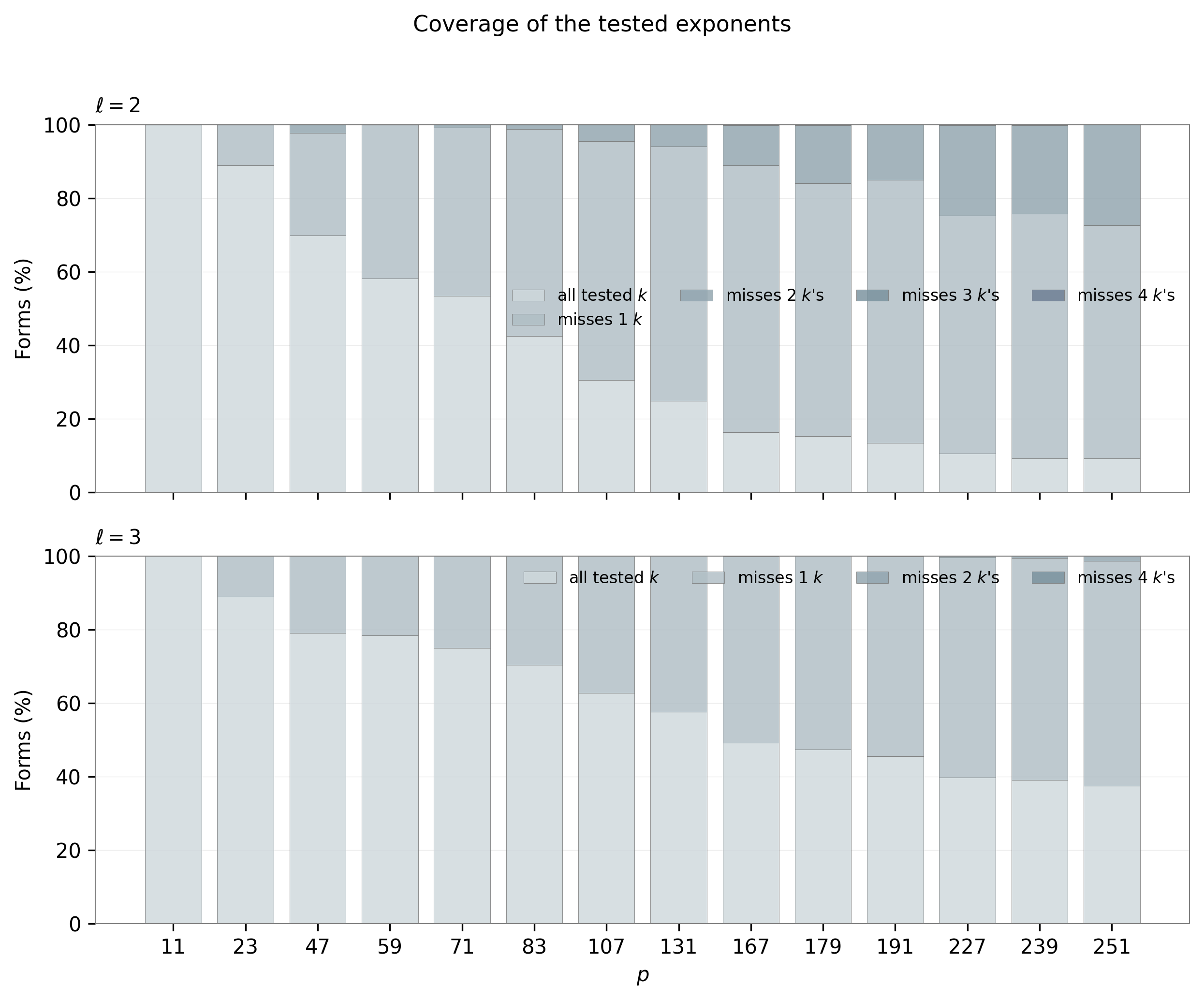}

        \smallskip
        \textbf{(b)}
    \end{minipage}

    \caption{%
    \textbf{(a)} For each prime $p$, the stacked bars give the proportion of
    refined Humbert invariants for which $k_{\min}=k$, where $k_{\min}$ is the
    smallest tested exponent such that $\ell^{2k}$ is primitively represented.
    The upper and lower panels correspond to $\ell=2$ and $\ell=3$,
    respectively. As $p$ increases, the distribution shifts from
    $k_{\min}=1$ toward larger values, with the shift occurring more slowly
    for $\ell=3$ than for $\ell=2$.\\
    \textbf{(b)} For each prime $p$, the stacked bars record the proportion of
    refined Humbert invariants according to the number of tested exponents
    $k$ for which $\ell^{2k}$ is not primitively represented. The segment
    labelled ``all tested $k$'' consists of forms representing $\ell^{2k}$
    primitively for every $1\leq k\leq K_{\max}$; the remaining segments
    distinguish forms missing one or more tested values of $k$. The upper and
    lower panels correspond to $\ell=2$ and $\ell=3$, respectively, where
    $K_{\max}=\lceil\ln p\rceil$ is the maximal exponent tested for the corresponding prime $p$.}

    \label{fig:splitting-experiments}
\end{figure}

\paragraph{RM vertices} We start with small characteristic primes $p$ for which all principal polarizations are known,
using the verified polarizations from \cite{edagaurish}. We report how RM by $\mathcal{O}_D$ appears for the full isogeny graphs for these primes.
By a \emph{Type} we mean an isometry class of refined Humbert invariants occurring in the sample. Distinct principally polarized surfaces may share a Type, so each Type accounts for at least one, and possibly several, vertices of the isogeny graph.
For each prime $p$ and each Type, we consider the primitively represented square-free values, excluding $D=1$, and define
$D_{\min}:=\min\{D>1 : D \text{ is primitively represented}\}$.

Figure~\ref{fig:min-D-summary}(a) shows the distribution of $D_{\min}$ across the Types for each $p$, with marker area proportional to the corresponding proportion. The minima are concentrated mainly at $ D=5, 13,$ and $17$, while larger values occur with substantially lower frequency. The second figure considers the cumulative proportion $F_p(D_0):=
\frac{\#\{T : D_{\min}(T)\leq D_0\}}{\#\{T\}}$ for $D_0\in\{5,13,17,21,29\}$. Thus, $F_p(D_0)$ measures the proportion of Types for which a nontrivial primitive representation occurs by the threshold $D_0$. Although the proportions for the smallest thresholds decrease as $p$ grows, the thresholds $17$, $21$, and $29$ still account for a large proportion of the Types.

\begin{figure}[ht!]
    \centering

    \begin{minipage}[t]{0.49\textwidth}
        \centering
        \includegraphics[width=\linewidth]{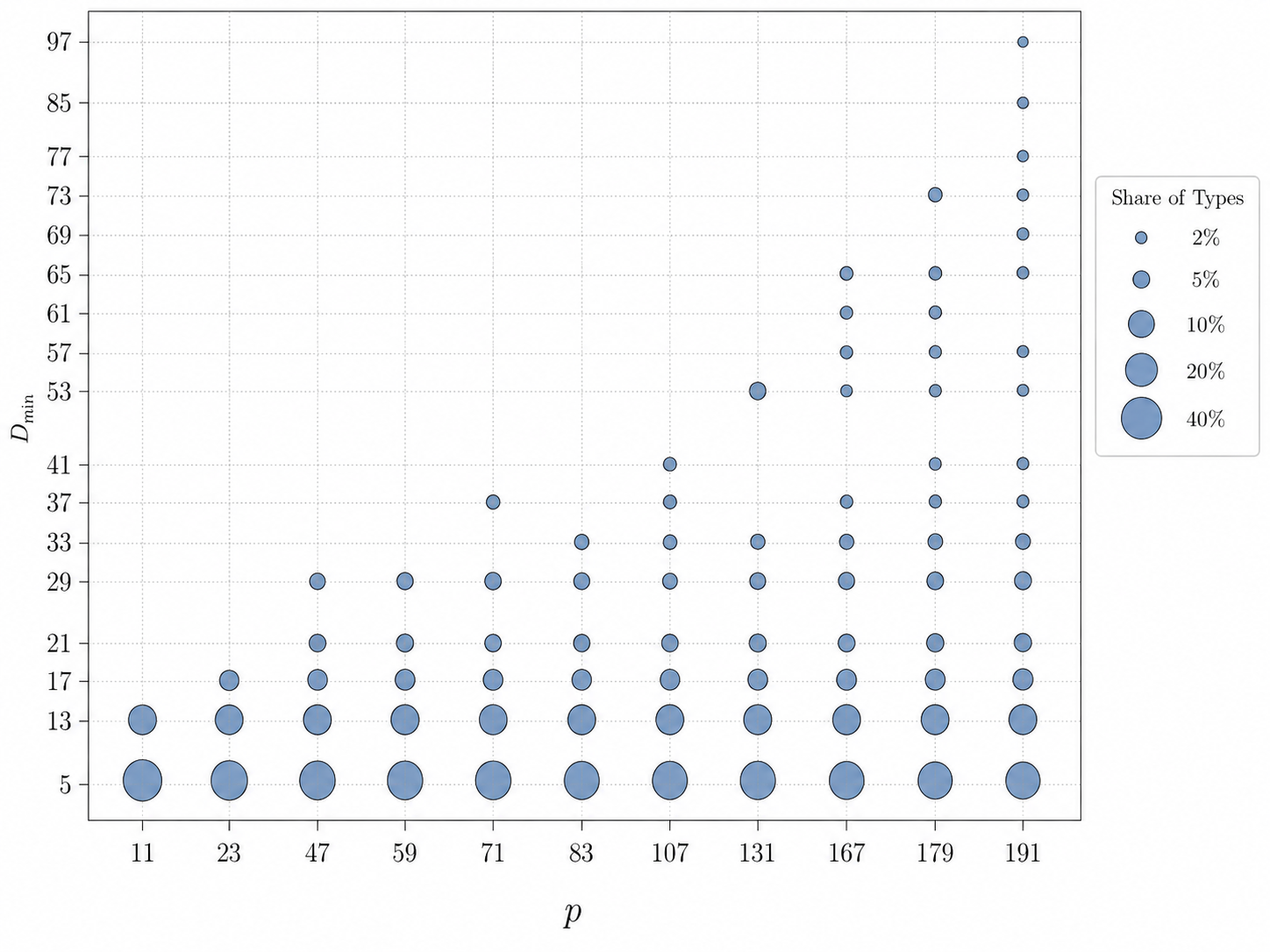}
        \smallskip
        \textbf{(a)}
    \end{minipage}
    \hfill
    \begin{minipage}[t]{0.49\textwidth}
        \centering
        \includegraphics[width=\linewidth]{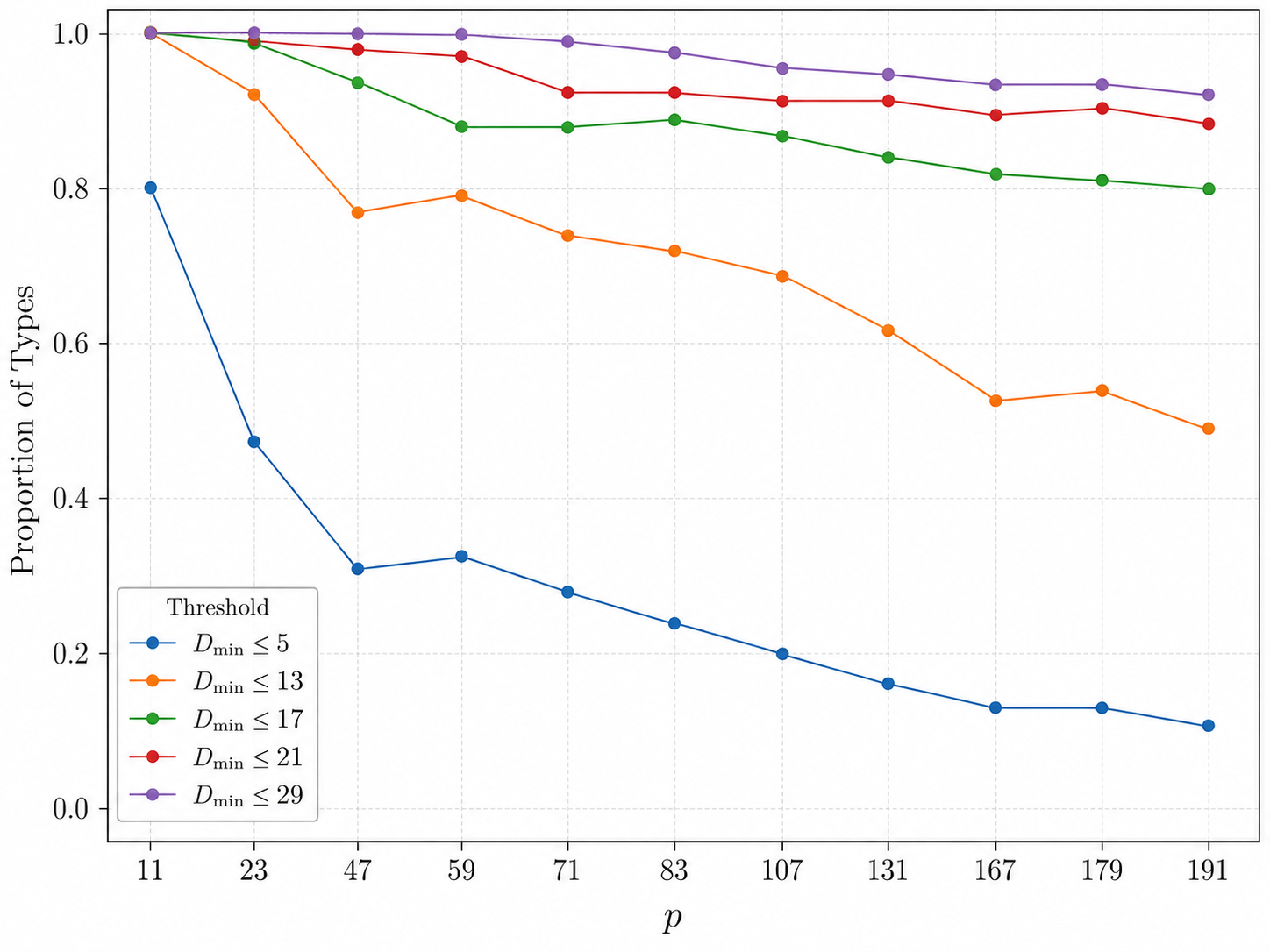}

        \smallskip
        \textbf{(b)}
    \end{minipage}
    \caption{%
    \textbf{(a)} Distribution of the minimum primitively represented value
    $D_{\min}$ for each prime $p$. For each Type, the value $D=1$ is excluded
    and $D_{\min}$ denotes the least remaining primitively represented value.
    For each pair $(p,D_{\min})$, the marker area is proportional to the
    fraction of Types attaining that minimum.
    \\
    \textbf{(b)} Cumulative proportions of Types satisfying prescribed bounds
    on $D_{\min}$. For each prime $p$, after excluding $D=1$, the curves give
    the proportion of Types for which $D_{\min}\leq D_0$, with
    $D_0\in\{5,13,17,21,29\}$.}
    \label{fig:min-D-summary}
\end{figure}

\subsection{The experiment with random polarizations for small primes}
For each prime $p\equiv11\pmod{12}$ with $227\leq p\leq1619$, we sample $100{,}000$ random
polarizations with Algorithm~\ref{alg:polzrandom} and compute the corresponding refined Humbert
invariants. For every resulting Type, we test
primitive representations of the square-free integers $D\leq 100$. Since
$D=1$ corresponds to the trivial case, we exclude it and define $D_{\min}(T):=\min\{D>1 : D \text{ is primitively represented by }q_T\}$.

Figure~\ref{fig:min-D-summary-random}(a) describes the distribution of the
first nontrivial represented value. For each prime $p$ and value $D$, the
marker area is proportional to the fraction of recorded Types satisfying
$D_{\min}=D$. The experiment, therefore, measures how early, with respect to
the discriminant bound, a nontrivial primitive representation is encountered.
The distribution is strongly concentrated at small values, principally
$D_{\min}=5,13,$ and $17$, whereas Types whose first primitive representation
occurs at a substantially larger discriminant form only a sparse upper tail.

Figure~\ref{fig:min-D-summary-random}(b) gives a cumulative description of
the same data. For each threshold $D_0\in\{5,13,17,21,29\}$, we compute
$F_p(D_0):=
\frac{\#\{T : D_{\min}(T)\leq D_0\}}
     {\#\{T\}}$, where $T$ ranges over the recorded Types obtained from the random-polarization
experiment for the prime $p$. Thus, $F_p(D_0)$ is the empirical proportion
for which a nontrivial primitive representation has already occurred by the
time the search reaches $D_0$. The curves show that the proportion attaining
the very smallest values decreases as $p$ increases. Nevertheless, a large
fraction of the Types still satisfy $D_{\min}\leq 17$, and allowing the
threshold to increase to $21$ or $29$ captures an even larger proportion.
Hence, throughout the tested range of primes, the first nontrivial primitive
representation is typically found at a comparatively small discriminant.

This experiment also serves as a partial check on the sampler. Figure~\ref{fig:min-D-summary}
and Figure~\ref{fig:min-D-summary-random} record the same statistic, the distribution of
$D_{\min}$ over Types, computed once from the complete set of principal polarizations
($11\leq p\leq191$) and once from $100{,}000$ draws of Algorithm~\ref{alg:polzrandom}
($227\leq p\leq503$). The two ranges do not overlap, so we cannot compare the two
directly; what we can say is that the sampled curves continue the exhaustive ones without
a visible discontinuity at the join, with $F_p(5)$ passing from $0.11$ at $p=191$ to
$0.10$ at $p=227$, and that $D_{\min}$ concentrates on $5$, $13$ and $17$ in both. A
like-for-like comparison would require running the sampled experiment at a prime where
the exhaustive list of \cite{edagaurish} is also available; we have not done so, and the
distribution Algorithm~\ref{alg:polzrandom} induces on isometry classes therefore remains
uncharacterized, as noted in Section~\ref{future}.

\begin{figure}[hbt]
    \centering

    \begin{minipage}[t]{0.49\textwidth}
        \centering
        \includegraphics[width=\linewidth]{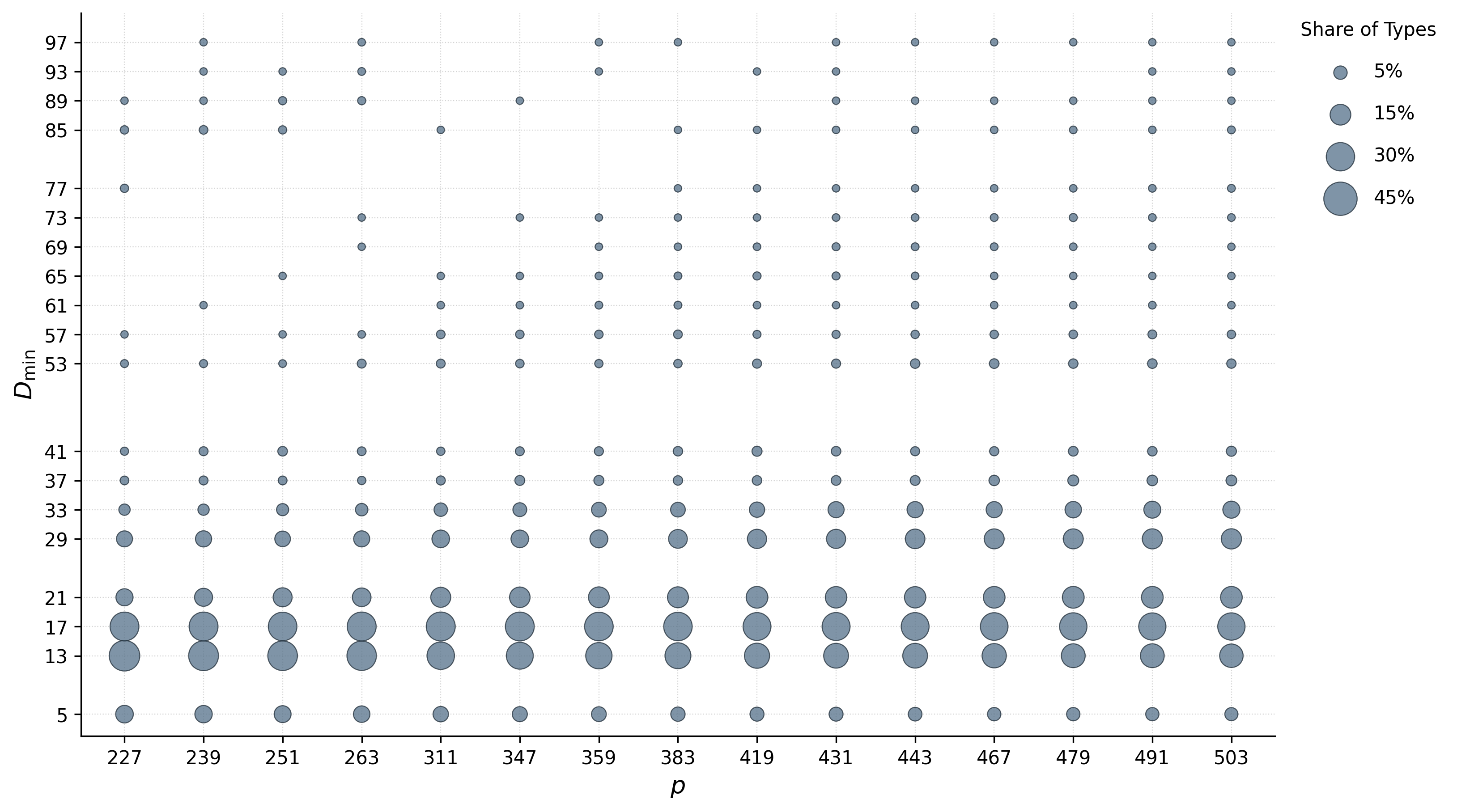}

        \smallskip
        \textbf{(a)}
    \end{minipage}
    \hfill
    \begin{minipage}[t]{0.49\textwidth}
        \centering
        \includegraphics[width=\linewidth]{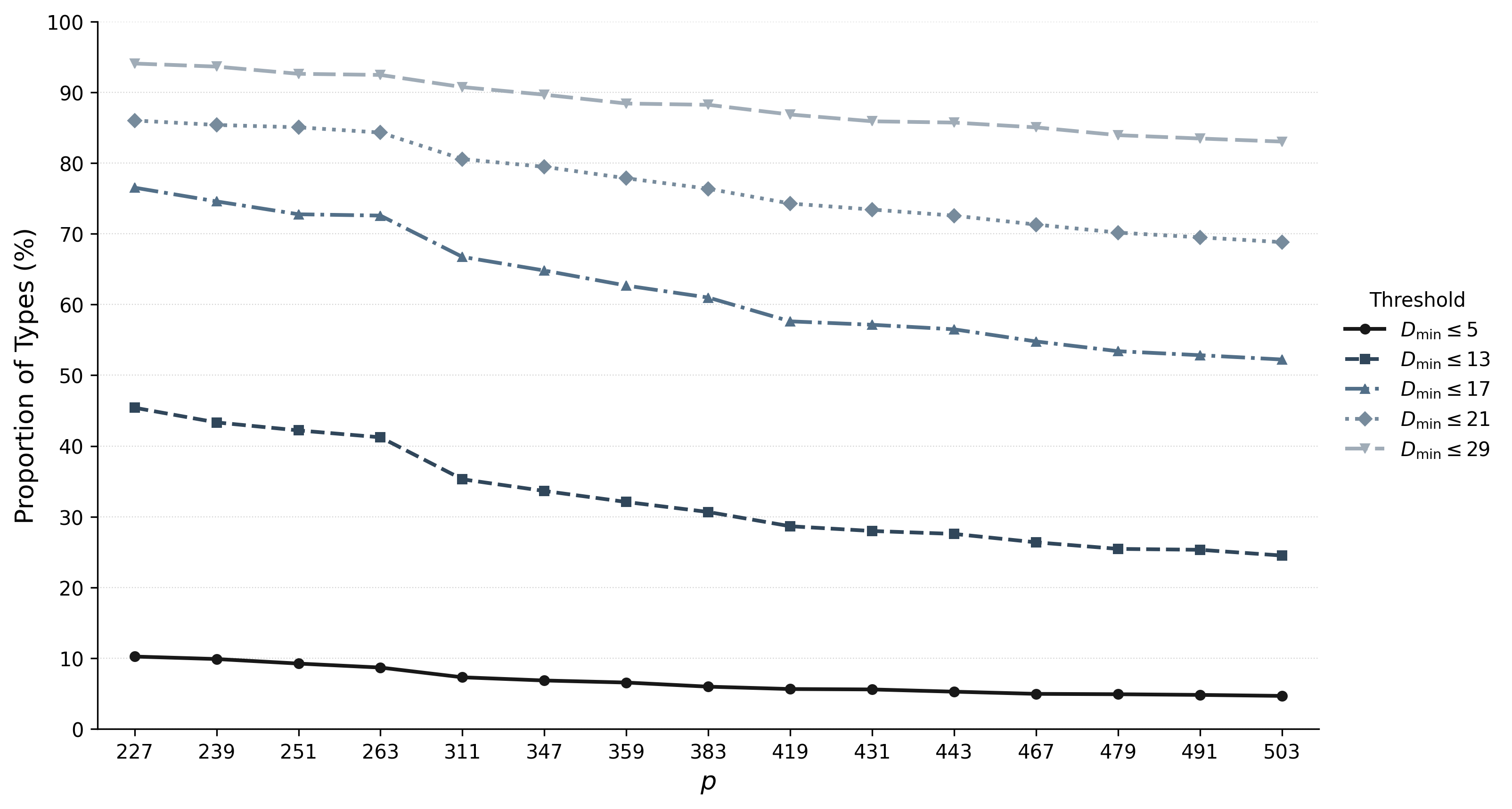}

        \smallskip
        \textbf{(b)}
    \end{minipage}

    \caption{%
    \textbf{(a)} Empirical distribution of the minimum primitively represented value $D_{\min}$ obtained from the $100{,}000$-polarization sampling experiment. For each recorded Type, we exclude $D=1$ and define $D_{\min}$ to be the least remaining primitively represented square-free value. For each pair $(p,D_{\min})$, the marker area is proportional to the fraction of recorded Types attaining that minimum. Types for which no value $D>1$ is represented up to $100$ are omitted from this distribution. \\
    \textbf{(b)} Cumulative proportions of the recorded Types satisfying prescribed bounds on $D_{\min}$. For each prime $p$ and threshold $D_0\in\{5,13,17,21,29\}$, the plotted value is the proportion of Types obtained from the $100{,}000$-polarization sampling experiment for which $D_{\min}\leq D_0$. Thus, each curve measures how frequently a first nontrivial primitive representation occurs by the corresponding threshold $D_0$.}
    \label{fig:min-D-summary-random}
\end{figure}

\paragraph{Experimental distribution of RM vertices.}
We summarize the experimental data in \cite{detectionofendomorphisms} for the distribution of RM surfaces by $\mathcal{O}_D$ in the isogeny graph of principally polarized superspecial abelian surfaces. The observed values suggest that the leading term in \eqref{RMheuristic} can be more accurately approximated by $p/\widetilde{\mathfrak{D}}_D^{\mathrm{expt}}$. These experiments appear to suggest that the leading
term in \eqref{RMheuristic} should be closer to $p/\widetilde{\mathfrak{D}}_D^{\mathrm{expt}}$ where
\begin{equation}
\widetilde{\mathfrak{D}}_D^{\mathrm{expt}}
=
\begin{cases}
\tfrac{5}{4}\mathfrak{D}_N
    & \text{if } D=N^2, \text{ and}\\
\tfrac{1}{3}\cdot
\tfrac{360\zeta_K(2)}{\pi^4}D^{3/2}
    & \text{if } D \text{ is fundamental.}
\end{cases}
\end{equation}

The condition of narrow class number being 1 is not needed for us. Nevertheless, the heuristic still aligns with our experimental data, 
even when $\mathcal{O}_D$ does not have narrow class number 1.

\subsection{Large primes}

The two experiments reported here are independent. The first fixes a single $50$-bit prime and
measures the least splitting level $k_0$ with Algorithm~\ref{alg:kzero}; the second ranges over
twenty primes of up to $1000$ bits and measures the splitting degrees returned by
Algorithm~\ref{alg:witness}. Their samples were drawn separately.

\paragraph{The least level $k_0$ at a $50$-bit prime.}
Fix $p=2^{11}\cdot3^{24}-1$, of $50$ bits, and $\ell=2$. We sample $10{,}000$ classes with
irreducible polarization from the genus of $L_5=(\Z^{5},q_5)$ --- refined Humbert invariants of
determinant $16p^{2}$ --- and compute $k_0$ for each.

The lattice minima are close to extremal. Here $(\det G_q)^{1/5}=1.3987\cdot10^{6}$, so Hermite's
bound $\gamma_5(\det G_q)^{1/5}$ with $\gamma_5=8^{1/5}=1.5157$ is $2.1200\cdot10^{6}$, while the
largest minimum in the sample is $\lambda_{1}=1.214\,(\det G_q)^{1/5}$. Every class therefore has a
minimum within $20\%$ of the smallest attainable by a rank-five lattice of this determinant, so the
barrier $k_1$ of \eqref{eq:k1bound} is set by the forms themselves and not by slack in the bound.

\begin{figure}[ht]
\centering
\includegraphics[width=\textwidth]{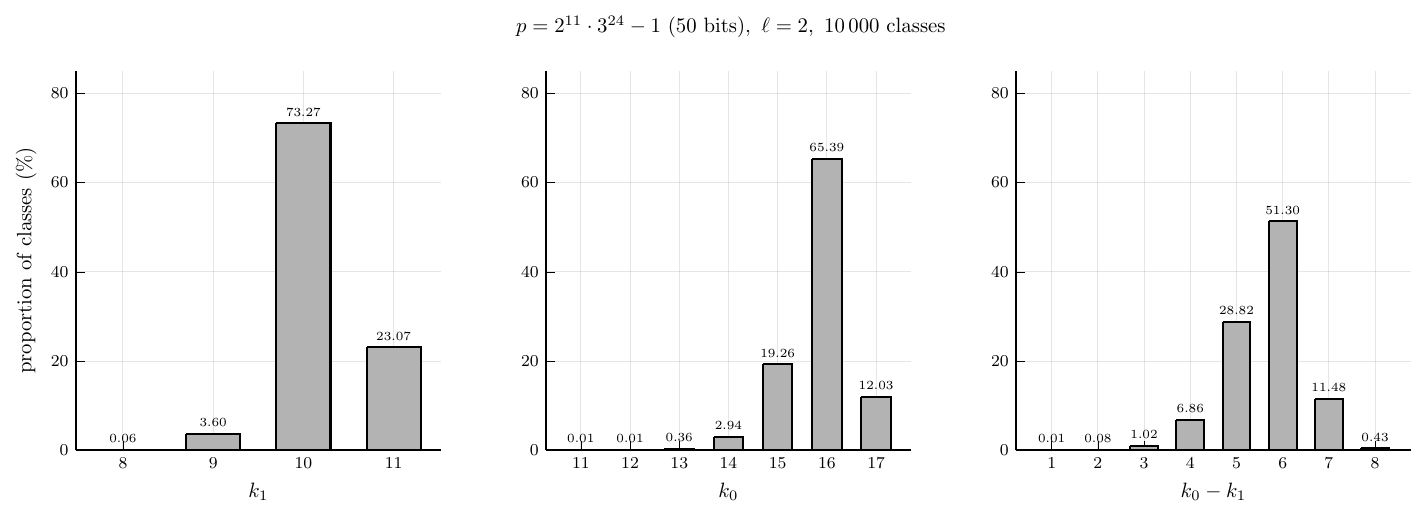}
\caption{The $10{,}000$ sampled classes at $p=2^{11}\cdot3^{24}-1$ with $\ell=2$: the barrier $k_{1}$
of \eqref{eq:k1bound}, the least splitting level $k_{0}$, and their difference. The barrier
concentrates on the leading term $\tfrac15\log_2 p$; the level lies some six steps above it; the two
never coincide.}
\label{fig:compare}
\end{figure}

Figure~\ref{fig:compare} gives the three distributions. The barrier is sharply concentrated:
$73.27\%$ of the classes have $k_1=10$ and $23.07\%$ have $k_1=11$, the remaining $3.66\%$ lying at
$8$ or $9$. The leading term of \eqref{eq:k1bound} is $\tfrac15\log_2 p=9.81$ at this prime, so
almost every class attains that estimate to within one step. The level $k_0$ sits some six steps
higher: $65.39\%$ of the classes have $k_0=16$ and a further $31.29\%$ have $k_0=15$ or $17$, with a
thin tail reaching down to $k_0=11$. The mean is $15.86$, that is $0.32\log_2 p$.

The third panel shows that the two never coincide. The smallest difference in the sample is
$k_0-k_1=1$, attained by one class in $10{,}000$; the distribution is centred on $6$ ($51.30\%$) and
$5$ ($28.82\%$), with mean $5.66$. The reason is that the targets $4^{k}$ are sparse. The barrier
certifies only that $4^{k_1}$ is not excluded by the lattice minimum, whereas a representation
requires $r(L,4^{k})>0$. For a rank-five lattice $r(L,m)\asymp m^{3/2}/\sqrt{\det G_q}$, so the
shells of norm $4^{k}$ stay empty until $4^{3k/2}\gtrsim\sqrt{\det G_q}$, that is until
$k\gtrsim\tfrac16\log_2\det G_q$; requiring the representing vector to be primitive removes a
further thin set. Against $k_1\approx\tfrac1{10}\log_2\det G_q$ this predicts a difference growing
like $\tfrac1{15}\log_2\det G_q$, which is the order of magnitude observed.

The same count gives $k_0\approx\tfrac16\log_2(16p^{2})$, so $k_0$ should grow linearly in
$\log_2 p$ with slope near $\tfrac13$; the measured $0.32\log_2 p$ agrees with this. The observed
value falls far short of the diameter bounds of
Section~\ref{sec:isogenygraphs}, which at this prime give $3\log_2 p=147$ conditionally and
$4.20\log_2 p=206$ unconditionally, so the stopping criterion of Section~\ref{sec:primreps} never
binds in practice.

\paragraph{Unrestricted splitting degrees at cryptographic sizes.}
Next, we apply Algorithm~\ref{alg:witness} to the primes in \cite[Table 6]{detectionofendomorphisms},
drawing $100{,}000$ classes independently for each prime.

\begin{figure}[htbp]
\centering
\includegraphics[width=0.49\textwidth]{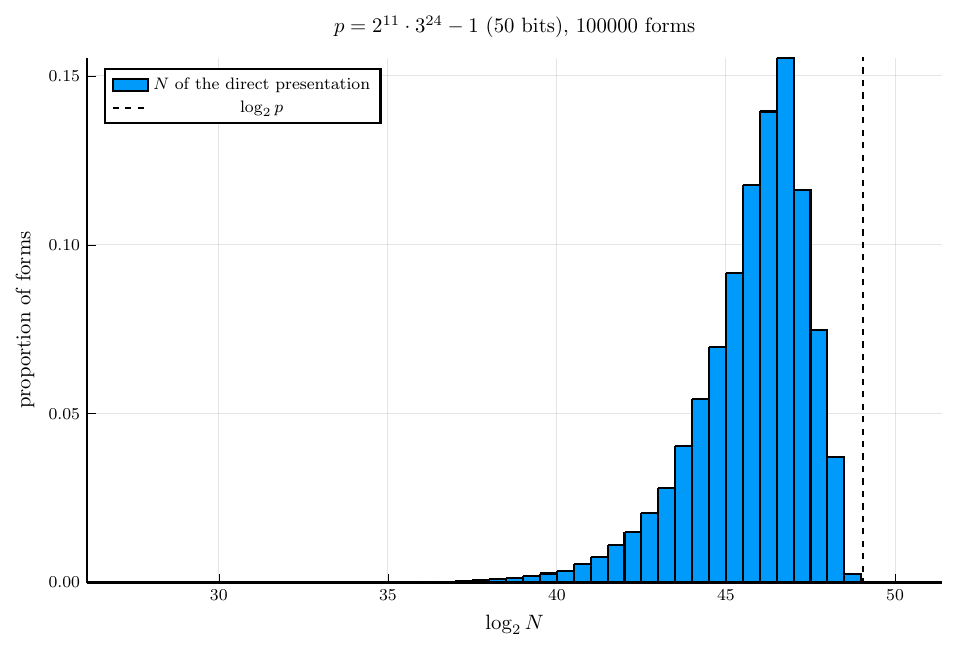}
\includegraphics[width=0.49\textwidth]{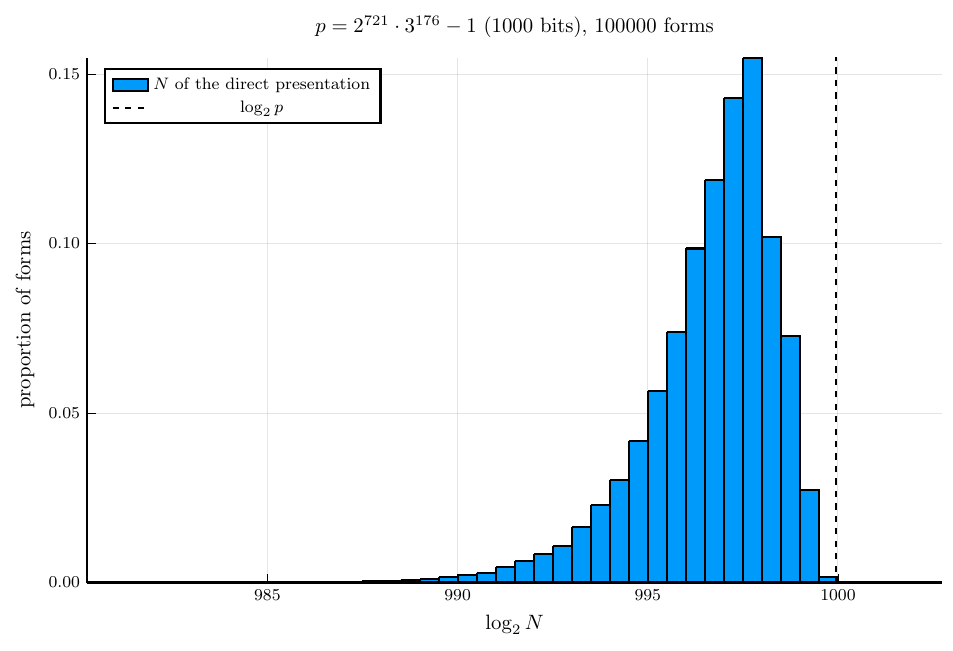}
\caption{The histogram of $\log_{2}N$ at $50$ bits (left) and $1000$ bits
(right), from the per-prime reports; the dashed line is $\log_{2}p$. The
same shape, shifted by $950$ bits.}\label{fig:sizes2}
\end{figure}

Figure~\ref{fig:sizes2} carries the finding of most direct cryptographic relevance. In both panels the
histogram of $\log_2 N$ concentrates on the dashed line $\log_2 p$, and the shape of the
distribution is essentially unchanged between $50$ and $1000$ bits. The witness returned by
Algorithm~\ref{alg:witness} therefore has $N$ of roughly the size of $p$, so the $(N,N)$-splitting
it detects has degree about $p^{2}$, uniformly across the whole range of parameters we tested.

Write $P^*(N)=\max\{\ell^{e}:\ell^{e}\parallel N\}$ for the largest prime-power
divisor of $N\ge2$, with $P^*(1)=1$. For $B\ge2$, $N$ is \emph{$B$-power-smooth} if
$P^*(N)\le B$; this is the notion relevant to isogeny computation and the one
tabulated below.

\begin{table}[htbp]
\centering
\tiny
\setlength{\tabcolsep}{2pt}
\begin{tabular}{cccccp{3.3in}}
\toprule
bits & $p$ & class & $\log_{2}N$ & $\log_{2}P^*(N)$ & $N$\\
\midrule
50 & $2^{11}\cdot 3^{24}-1$ & 89\,352 & 42.1 & 7.5 & $2^{3}\cdot 3^{3}\cdot 5\cdot 23\cdot 73\cdot 103\cdot 137\cdot 181$ \\
100 & $2^{44}\cdot 3^{35}-1$ & 71\,177 & 94.8 & 15.5 & $2\cdot 131\cdot 241\cdot 269\cdot 8219\cdot 13177\cdot 13229\cdot 30113\cdot 45893$ \\
150 & $2^{27}\cdot 3^{77}-1$ & 87\,317 & 147.3 & 24.5 & $2\cdot 223\cdot 1511\cdot 3323\cdot 48029\cdot 163433\cdot 165041\cdot 489761\cdot 6480421\cdot 24222791$ \\
200 & $2^{144}\cdot 3^{35}-1$ & 41\,518 & 195.3 & 28.4 & $2\cdot 23431\cdot 363833\cdot 505927\cdot 1131103\cdot 1143013\cdot 3094361\cdot 68911559\cdot 69813571\cdot 363789383$ \\
250 & $2^{181}\cdot 3^{43}-1$ & 59\,632 & 247.4 & 35.5 & $2^{6}\cdot 9293\cdot 482753\cdot 5258807\cdot 23598691\cdot 255769463\cdot 849190603\cdot 28251002411\cdot 29155313437\cdot 48311528479$ \\
300 & $5\cdot 2^{193}\cdot 3^{66}-1$ & 1\,765 & 296.0 & 184.0 & $2\cdot 3^{5}\cdot 11\cdot 17\cdot 179\cdot 38821\cdot 84011\cdot 300491\cdot 388253\cdot 828967\cdot \nu$, $\nu$ prime of 184 bits \\
350 & $2^{201}\cdot 3^{94}-1$ & 50\,392 & 345.2 & 252.0 & $23\cdot 2153\cdot 2677\cdot 2789\cdot 265613\cdot 294461\cdot 398287\cdot \nu$, $\nu$ prime of 252 bits \\
400 & $2^{231}\cdot 3^{106}-1$ & 84\,890 & 396.7 & 308.8 & $2\cdot 5\cdot 263\cdot 283\cdot 541\cdot 1423\cdot 13691\cdot 41729\cdot 891893\cdot \nu$, $\nu$ prime of 309 bits \\
450 & $2^{204}\cdot 3^{155}-1$ & 68\,535 & 446.6 & 353.6 & $3\cdot 11\cdot 17\cdot 9137\cdot 82267\cdot 93323\cdot 357169\cdot 752351\cdot \nu$, $\nu$ prime of 354 bits \\
500 & $2^{113}\cdot 3^{244}-1$ & 96\,064 & 495.2 & 390.3 & $5^{4}\cdot 7\cdot 37\cdot 157\cdot 15373\cdot 34403\cdot 114493\cdot 134293\cdot 182857\cdot \nu$, $\nu$ prime of 391 bits \\
550 & $2^{293}\cdot 3^{162}-1$ & 31\,012 & 548.9 & 446.0 & $2^{2}\cdot 17\cdot 6203\cdot 20939\cdot 50231\cdot 63709\cdot 502087\cdot 622861\cdot \nu$, $\nu$ prime of 447 bits \\
600 & $5\cdot 2^{299}\cdot 3^{188}-1$ & 22\,865 & 590.7 & 490.3 & $3\cdot 71\cdot 83\cdot 107\cdot 139\cdot 1327\cdot 8663\cdot 28349\cdot 36137\cdot 528779\cdot \nu$, $\nu$ prime of 491 bits \\
650 & $2^{404}\cdot 3^{155}-1$ & 51\,632 & 645.8 & 526.2 & $2^{6}\cdot 3\cdot 31\cdot 103\cdot 107\cdot 197\cdot 223\cdot 3529\cdot 30091\cdot 47713\cdot 95731\cdot 701419\cdot \nu$, $\nu$ prime of 527 bits \\
700 & $2^{83}\cdot 3^{389}-1$ & 7\,620 & 694.5 & 598.8 & $11\cdot 151\cdot 379\cdot 1657\cdot 12043\cdot 28351\cdot 279607\cdot 667559\cdot \nu$, $\nu$ prime of 599 bits \\
750 & $2^{477}\cdot 3^{172}-1$ & 61\,730 & 745.6 & 659.2 & $2^{2}\cdot 1861\cdot 4057\cdot 7937\cdot 41467\cdot 50363\cdot 200689\cdot \nu$, $\nu$ prime of 660 bits \\
800 & $2^{107}\cdot 3^{437}-1$ & 36\,022 & 795.1 & 707.3 & $5\cdot 53\cdot 73\cdot 79\cdot 137\cdot 1657\cdot 44017\cdot 83071\cdot 207293\cdot \nu$, $\nu$ prime of 708 bits \\
850 & $2^{166}\cdot 3^{431}-1$ & 49\,054 & 845.4 & 751.0 & $2\cdot 5^{7}\cdot 13\cdot 59^{2}\cdot 5279\cdot 42677\cdot 50527\cdot 337217\cdot \nu$, $\nu$ prime of 751 bits \\
900 & $2^{172}\cdot 3^{459}-1$ & 44\,512 & 895.2 & 801.5 & $2^{5}\cdot 227\cdot 823\cdot 1931\cdot 4127\cdot 18341\cdot 62861\cdot 277897\cdot \nu$, $\nu$ prime of 802 bits \\
950 & $2^{536}\cdot 3^{261}-1$ & 35\,055 & 943.3 & 854.3 & $137\cdot 1433\cdot 93337\cdot 163411\cdot 306407\cdot 662713\cdot \nu$, $\nu$ prime of 855 bits \\
1000 & $2^{721}\cdot 3^{176}-1$ & 57\,274 & 997.2 & 896.7 & $11\cdot 197\cdot 2797\cdot 7499\cdot 19051\cdot 24337\cdot 102983\cdot 822761\cdot \nu$, $\nu$ prime of 897 bits \\
\bottomrule
\end{tabular}
\caption{The witness of least $P^{*}(N)$ found at each prime, over the $100\,000$ classes sampled
for that prime. The \emph{class} column is the index of the class realizing it.}\label{tab:smoothest}
\end{table}

Table~\ref{tab:smoothest} records a negative result as much as a positive one. Up to $250$ bits the
smoothest witness found is genuinely power-smooth, with $\log_2P^{*}(N)$ between $7.5$ and $35.5$.
From $300$ bits upward, however, the best of the $100\,000$ witnesses sampled at that prime still carries a prime factor of
between $184$ and $897$ bits, and $\log_2P^{*}(N)$ tracks $\log_2N$ closely. In other words,
Algorithm~\ref{alg:witness} as it stands essentially never returns a power-smooth splitting degree at
cryptographic size. Constraining Simon's algorithm so that it does is the first of the open
problems in Section~\ref{future}.

\section{Conclusion and future directions}\label{future}

We have shown that two structural questions about principally polarized superspecial abelian
surfaces --- whether a surface admits an $(N,N)$-splitting, and whether it carries real
multiplication by a prescribed order --- reduce to primitive-representation problems for the
quintic refined Humbert invariant, and that both can be answered from the polarization data alone,
without Igusa invariants, defining equations for a genus $2$ curve, or any explicit chain of
isogenies. Section~\ref{designing} turns this into three algorithms: a sampler for principal polarizations
that reaches far beyond exhaustive enumeration, an enumeration procedure for the least splitting
level $k_0$, and a reduction of the unrestricted splitting problem to finding an isotropic vector
of an indefinite rank-six form. Lemma~\ref{lem:k0-distance} places $k_0$ precisely: it is the length of
the shortest good-extension walk from the surface to the split surface, and an upper bound for the
distance in the Richelot graph.

The experiments support this in two regimes. For $11\leq p\leq251$, where every principal
polarization is known, the automorphism counts obtained from $r_4(q_{\C})$ reproduce the counts of
Ibukiyama--Katsura--Oort exactly at the level of column totals (Table~\ref{auto-experiment} and
Appendix~\ref{app:iko}), and the irreducible polarizations of $E\times E$ turn out to realize \emph{every}
class in the genus of $q_5$ that does not represent $1$.
For large parameters, Algorithm~\ref{alg:witness} produced witnesses for primes of up to $1000$
bits; the resulting splitting degrees satisfy $\log_2 N\approx\log_2 p$, with a distribution whose
shape is essentially unchanged across the range, so the detected $(N,N)$-isogeny has degree about
$p^{2}$ throughout. Table~\ref{tab:smoothest} records the corresponding negative result: power-smooth
witnesses occur up to $250$ bits and, among the $100\,000$ classes sampled per prime, not beyond $300$.

Two limitations should be stated plainly. First, our method detects a splitting without
constructing it: a primitive representation certifies that an $(N,N)$-isogeny to a product exists
and fixes its degree, but yields neither the isogeny nor the identity of the surfaces involved.
Second, the experiments in the sampled regime inherit whatever distribution
Algorithm~\ref{alg:polzrandom} induces on isomorphism classes, which we have not characterized;
conclusions drawn from those samples should be read with that in mind.

One direction is to impose arithmetic conditions on the splitting degree $N$ arising from primitive representations $q_{(\A,\theta)}(x)=N^2$. For isogeny computation, the most relevant cases are $N=\ell^k$ and, more generally, $B$-power-smooth $N$. If $N=\prod_i\ell_i^{e_i}$, then $N$ is $B$-power-smooth when $\ell_i^{e_i}\leq B$ for every $i$; this is stronger than $B$-smoothness, which only requires $\ell_i\leq B$. Our computations already produce primitive representations with $B$-power-smooth $N$ at moderate size: among the $100{,}000$ classes sampled per prime, the smoothest witness has $\log_2P^{*}(N)$ between $7.5$ and $35.5$ for primes up to $250$ bits, but from $300$ bits upward none below $184$ bits was found (Table~\ref{tab:smoothest}). A natural problem is therefore to modify the randomized choices, reductions, and restarts in Simon's algorithm~\cite{simon,watkins} so as to produce primitive solutions $q_{(\A,\theta)}(v)=N^2$ with $N$ constrained to be an $\ell$-power or $B$-power-smooth. This amounts to obtaining effective bounds for the smallest such $N$, together with an efficient algorithm for finding the corresponding primitive representation.

A second direction is to investigate whether the splittings via refined Humbert invariant can be used to improve and accelerate $\mathrm{KLPT}^2$, in the same spirit as the application of conversion of module representations by Page-Robert--Soumier \cite{AurelJulienDamien}. Their approach passes from the Hermitian module representation to the underlying rank-$8$ integral quadratic lattice, where suitable primitive vectors are used to construct principal rank-$1$ submodules and corresponding splitting isogenies; as an application, this leads to an improved output-degree bound for $\mathrm{KLPT}^2$.
Starting from the same IKO representation, the principal polarization determines the quintic refined Humbert invariant $q_{(\A,\theta)}$. A primitive representation $q_{(\A,\theta)}(x)=N^2$ determines a splitting degree $N$, so the search for suitable values of $N$ can instead be formulated directly as a primitive representation problem of $N^2$ by $q_{(\A,\theta)}$. This separates the determination of the splitting degree from the explicit construction of the corresponding isogeny. It remains to determine whether the $(N,N)$-splitting detected by a primitive representation of $q_{\C}$ can also be constructed effectively at the level of explicit isogenies, and whether this approach can be used to obtain a comparable improvement of $\mathrm{KLPT}^2$.

\noindent\textbf{Acknowledgements:}
The authors used Claude (Anthropic) and ChatGPT (OpenAI) exclusively for text proofreading and minor code debugging. All mathematical formulations, proofs, and algorithmic contributions were developed independently by the authors.

\bibliographystyle{splncs04}
\bibliography{bibliography.bib}

\appendix
\section{Refined Humbert invariants}\label{sec:RHI}
In this section, we introduce the refined Humbert invariants defined by Kani \cite{Kani94,KaniMJ}. The refined Humbert invariant $q_{(\A,\theta)} $ is the main tool of this paper. These invariants are very useful for the interplay between geometric and arithmetic problems. This section mainly follows Kani \cite{KaniJac14,KaniMJ}.

We first present some essential facts about the N\'eron-Severi groups of abelian varieties by following \cite[Section 11]{KaniMJ}. 

\begin{definition}\label{defn: NS}
Let $\A$ be an abelian variety over a field $K$. The \emph{N\'eron-Severi group} of $\A$ is defined as the quotient group
$\NS(\A) = \Pic(\A)/ \Pic^{0}(\A)$, which is a free $\Z$-module of finite rank.
\end{definition}

\begin{definition}\emph{\cite[p.357]{hartshorne2013algebraic}}\label{divisorequivalence2}
Let $\A$ be an abelian surface over a field $K$ and let $\Div(\A)$ be the set of divisors on $\A$.
For two divisors $D_1$, $D_2 \in \Div(\A)$, we say that $D_1$ is \emph{numerically equivalent} to $D_2$, denoted by $D_1 \equiv D_2$, if
$(D_1 \cdot D) = (D_2 \cdot D)$ for all $D \in \Div(\A)$,
where $(\cdot)$ denotes the intersection number.
\end{definition}

\begin{definition}\label{polarizations}
Let $\A/K$ be an abelian surface. We define \emph{the set of principal polarizations} of $\A$ as
\begin{align*}
\mathcal{P}(\A)= \{\cl(D)\in \NS(\A) : D\in \Div(\A) \text{ is ample and } (D\cdot D) =2 \}.
\end{align*}
\end{definition}

For a divisor $D\in\Div(\A)$, we write $\cl(D)$ for its class in $\NS(\A)$ and $\phi_D:\A\rightarrow\hat{\A}$ for the homomorphism associated to $D$. When $\cl(D)\in\mathcal{P}(\A)$, the map $\phi_D$ is an isomorphism. Throughout, we use the same symbol $\theta$ for a principal polarization class in $\NS(\A)$ and for the corresponding isomorphism $\theta:\A\xrightarrow{\sim}\hat{\A}$ when no confusion can arise.

If $\A$ has a principal polarization $\theta : \A  \xrightarrow{\sim},
\hat{\A}$  then $\NS(\A)$ can be interpreted as a subgroup
of $\End(\A)$  \cite[p.~126]{milne1986abelian}. Moreover, if $r_\theta$ denotes the Rosati involution on $\End(\A)$,  defined by $r_{\theta}(\mu) = \theta^{-1}
\widehat{\mu} \theta$, then by \cite[p.~189,190]{mumford1970abelian}, the map
$D \mapsto \theta^{-1}\phi_D$ defines an isomorphism
\begin{equation}\label{morphismEndNS}
\Phi_{\theta} : \NS(\A)  \xrightarrow{\sim} \End_{\theta}(\A) := \{ \mu \in \End(\A) : r_{\theta}(\mu) = \mu \}.
\end{equation}
The intersection number $(D_1\cdot D_2)$ of divisors $D_1$, $D_2$ on $\A$ defines an integral quadratic form as follows.
\begin{definition}\label{intersectionform}
The intersection product $(D_1 \cdot  D_2)$ of divisors $D_1, D_2$ on an abelian surface $\A$ defines an integral quadratic form $q_{\A}$ on $\NS(\A)$, called \emph{intersection form}: $q_{\A}(D)=\frac{1}{2}(D\cdot D)$ for all $D\in \NS(\A)$.
\end{definition}
Since $\NS(\A)\cong \Z^{\rho}$ where $\rho = \rho(\A)$ is the Picard number of $\A$, $q_{\A}$ is equivalent to an integral quadratic form $q$ in $\rho$ variables, therefore it leads to an isomorphism $(\NS(\A), q_{\A})\cong (\Z^{\rho}, q)$ of quadratic modules.

\begin{definition}\label{polNS}
Let $\A/K$ be an abelian surface and let $\theta \in \mathcal{P}(\A)$ be a principal polarization of $\A$. We define the \emph{polarized N\'eron-Severi group} of $(\A,\theta)$ to be    $\NS(\A,\theta) \coloneqq \NS(\A)/\Z\theta$.
\end{definition}
If $\A$ has a principal polarization $\theta\in\mathcal{P}(\A)$, then the quadratic form $\tilde{q}_{(\A,\theta)}$ on $\NS(\A)$ is defined by
\begin{equation}
\tilde{q}_{(\A,\theta)}(D) = (D \cdot \theta)^2 - 2(D\cdot D), \textrm{ for } D \in \NS(\A).
\end{equation}
It follows that $\tilde{q}_{(\A,\theta)}(D + n\theta) = \tilde{q}_{(\A,\theta)}(D)$ for all $n\in \Z$, therefore $\tilde{q}_{(\A,\theta)}$ actually gives a quadratic form $q_{(\A,\theta)}$ on the quotient module $\NS(\A, \theta)$. Then, $q_{(\A,\theta)}$ is a positive definite form on $\NS(\A,\theta)\cong \Z^{\rho-1}$ \cite{Kani94}.

This is the refined Humbert invariant $q_{(\A,\theta)}$ of Definition~\ref{RHIdefn}.

For an integral quadratic form $q$ on a lattice $L$, we say that $q$ \emph{represents} an integer $n$ if $q(x)=n$ for some $x\in L$, and that $q$ \emph{primitively represents} $n$ if such a vector $x$ can be chosen primitive. We write $q\equiv 0,1\pmod 4$ to mean that $q(x)\equiv 0$ or $1\pmod 4$ for every $x\in L$.

\paragraph{A necessary condition} For a generic principally polarized abelian surface $(\A,\theta)$, any integral quadratic form arising as a refined Humbert invariant necessarily satisfies the following condition.

\begin{theorem} \label{necessarycondition}
If there exists $(\A,\theta)/K$ such that a given integral quadratic form $q$ is equivalent to a refined Humbert invariant $q_{(\A,\theta)}$, then $q \equiv 0,1\pmod 4$.
\end{theorem}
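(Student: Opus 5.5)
It suffices to show that for every $D\in\NS(\A)$ the integer $\tilde{q}_{(\A,\theta)}(D)=(D\cdot\theta)^2-2(D\cdot D)$ is congruent to $0$ or $1$ modulo $4$. Every value of $q_{(\A,\theta)}$ on $\NS(\A,\theta)$ is such a value of $\tilde{q}_{(\A,\theta)}$, and equivalence of quadratic forms preserves the set of represented values. So the property transfers to any $q$ equivalent to $q_{(\A,\theta)}$.

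The key input is that on an abelian surface the self-intersection of any divisor is even. By Riemann--Roch for abelian surfaces, $\chi(\mathcal{O}_{\A}(D))=\tfrac12(D\cdot D)$, and $\chi$ is an integer; equivalently, this is why the intersection form $q_{\A}(D)=\tfrac12(D\cdot D)$ in Definition~\ref{intersectionform} is integral. I will cite this (Mumford, \S16) and note that it holds over any field $K$, since Riemann--Roch for abelian varieties is characteristic-free. Write $(D\cdot D)=2m$ with $m\in\Z$, and set $a=(D\cdot\theta)\in\Z$.

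Then $\tilde{q}_{(\A,\theta)}(D)=a^2-4m\equiv a^2\pmod 4$, and $a^2\equiv 0$ or $1\pmod 4$ according to the parity of $a$. The expression is also well defined on the quotient $\NS(\A)/\Z\theta$: using $(\theta\cdot\theta)=2$, expanding shows that $\tilde{q}_{(\A,\theta)}(D+n\theta)=\tilde{q}_{(\A,\theta)}(D)$, as already noted in the paper.

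The only substantive step is the evenness of $(D\cdot D)$, and I will justify it by Riemann--Roch as above. Everything else is a one-line congruence computation.
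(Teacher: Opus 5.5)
Your proof is correct. By Riemann--Roch on an abelian surface, $(D\cdot D)=2\chi(\mathcal{O}_{\A}(D))$ is even, so $\tilde q_{(\A,\theta)}(D)\equiv (D\cdot\theta)^2\equiv 0,1 \pmod 4$, and $\GL_n(\Z)$-equivalence preserves the set of represented values. The paper gives no argument of its own here: it only cites Kani's Theorem~1 and the thesis. Your parity computation is the standard proof behind that citation, so it makes the statement self-contained.
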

\begin{proof}
See~\cite[Theorem 1]{Kanirefhum}. This result appeared in \cite[Theorem 3.1.1]{harunkirthesis}
\end{proof}

\paragraph{An equivalent definition:}  Kani generalized and extended the definition of a refined Humbert invariant in \cite[Remark 16]{KaniESCI} later.
By the isomorphism $\Phi_{\theta}$ \ref{morphismEndNS} above, the polarization class $\theta$ corresponds to the identity endomorphism $1_{\A}$. Hence $\Phi_{\theta}$ induces an isomorphism $
\NS(\A,\theta)=\NS(\A)/\Z\theta\xrightarrow{\sim}\End_{\theta}(\A)/\Z 1_{\A}$. Through this identification, we use the same notation $q_{(\A,\theta)}$ for the corresponding quadratic form on $\End_{\theta}(\A)/\Z1_{\A}$, and write $q_{(\A,\theta)}(\mu)$ for the value on the class of $\mu$.

\begin{proposition}\label{RHInewdefn}
Let $(\A,\theta)$ be a principally polarized abelian surface over a field $K$ and let $q_{(\A,\theta)}$ be a refined Humbert invariant of $(\A,\theta)$.
Then for every $\mu \in \End_{\theta}(\A)$, we obtain
$$q_{(\A,\theta)}(\mu) = \tr(\mu^2) - \frac{1}{4}(\tr(\mu))^2$$
where  $\tr$ is the (rational) trace of an endomorphism as defined in \emph{\cite[p.182]{mumford1970abelian}}.
\end{proposition}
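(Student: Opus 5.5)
The plan is to use the isomorphism $\Phi_\theta$ of \eqref{morphismEndNS} and express both intersection numbers in the definition of $\tilde q_{(\A,\theta)}$ through traces of the corresponding symmetric endomorphism. Fix $D\in\NS(\A)$ and put $\mu=\Phi_\theta(D)=\theta^{-1}\phi_D\in\End_\theta(\A)$. The key input is the classical description of the characteristic polynomial of such an endomorphism on an abelian surface, from Mumford's treatment of the Riemann--Roch theorem. For an integer $n$, since $\phi_{D+n\theta}=\phi_D+n\theta$, we get $\Phi_\theta(D+n\theta)=\mu+n$. Riemann--Roch gives
\[
\deg(\phi_{D+n\theta})=\Bigl(\tfrac{((D+n\theta)\cdot(D+n\theta))}{2}\Bigr)^2,
\]
and $\deg\phi_{D+n\theta}=\deg(\theta)\deg(\mu+n)=\deg(\mu+n)$. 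Here $\deg(\mu+n)=P_\mu(-n)$, where $P_\mu$ is the characteristic polynomial of $\mu$, which is monic of degree $4$.

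From this I would extract the trace identities. Expanding, $((D+n\theta)^2)/2=\tfrac12(D\cdot D)+n(D\cdot\theta)+n^2$, using $(\theta\cdot\theta)=2$. So $P_\mu(-n)=\bigl(n^2+n(D\cdot\theta)+\tfrac12(D\cdot D)\bigr)^2$ for all integers $n$, and hence as polynomials. Thus $P_\mu(t)=f(t)^2$ with
\[
f(t)=t^2-(D\cdot\theta)t+\tfrac12(D\cdot D).
\]
Call the roots of $f$ $a,b$. The roots of $P_\mu$ are then $a,a,b,b$, giving $\tr(\mu)=2(a+b)=2(D\cdot\theta)$ and $\tr(\mu^2)=2(a^2+b^2)=2\bigl((D\cdot\theta)^2-(D\cdot D)\bigr)$. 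One point to check here is that the rational trace $\tr$ of \cite{mumford1970abelian} is the sum of the roots of $P_\mu$ with multiplicity, and that the trace of $\mu^2$ is the power sum of the roots; both are standard, since $P_{\mu}$ is the characteristic polynomial of $\mu$ on $V_\ell\A$.

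Finally, substitute:
\[
\tr(\mu^2)-\tfrac14\tr(\mu)^2=2(D\cdot\theta)^2-2(D\cdot D)-(D\cdot\theta)^2=(D\cdot\theta)^2-2(D\cdot D)=\tilde q_{(\A,\theta)}(D).
\]
This equals $q_{(\A,\theta)}$ evaluated on the class of $\mu$. The expression is also invariant under $\mu\mapsto\mu+n$, consistent with passing to the quotient by $\Z1_\A$.

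The main obstacle is justifying the identity $\deg\phi_{D}=\chi(D)^2=((D\cdot D)/2)^2$ together with $\deg(\mu+n)=P_\mu(-n)$, that is, pinning down normalisations. I would cite Mumford \cite[\S16, \S19--21]{mumford1970abelian} for both facts (Riemann--Roch on abelian varieties and the characteristic polynomial), and then verify the sign conventions on the trace.
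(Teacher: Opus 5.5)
Your argument is correct. The paper gives no proof of this proposition; it imports the formula from Kani (\cite[Remark 16]{KaniESCI}). So there is no in-paper argument to compare against, and what you have written supplies the missing derivation.

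Your route is to apply Riemann--Roch, $\deg\phi_L=\chi(L)^2$ with $\chi(L)=(L\cdot L)/2$, to the family $D+n\theta$, and to combine it with Mumford's $P_\mu(n)=\deg(n_{\A}-\mu)$. This works in any characteristic. It also proves more than the proposition: the identity $P_\mu(t)=\bigl(t^2-(D\cdot\theta)t+\tfrac12(D\cdot D)\bigr)^2$ gives $\tr(\mu)=2(D\cdot\theta)$ and $(D\cdot D)=\tfrac14\tr(\mu)^2-\tfrac12\tr(\mu^2)$ separately, whereas the cited statement records only their combination.

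The normalisations you flagged all check out:
\begin{enumerate}
\item $\deg(\mu+n)=\deg(-1_{\A})\,P_\mu(-n)=P_\mu(-n)$.
\item If $\phi_{D+n\theta}$ is not an isogeny, Mumford's Riemann--Roch still applies with the convention $\deg=0$.
\item Mumford's $\tr$ is minus the $t^{3}$-coefficient of $P_\mu$, i.e.\ the sum of its roots.
\item $\tr(\mu^2)$ is the sum of the squared roots, since $P_\mu$ is the characteristic polynomial of $\mu$ on $V_\ell\A$ for $\ell\neq\charec(K)$.
\end{enumerate}

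Two small points should be stated explicitly. First, every $\mu\in\End_\theta(\A)$ has the form $\Phi_\theta(D)$ because $\Phi_\theta$ is bijective. Second, for non-closed $K$ you may pass to $\overline{K}$, since degrees, intersection numbers and traces are unchanged by base extension.
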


\paragraph{Irreducibility criterion}
We first recall the distinction between reducible and irreducible polarizations on an abelian surface.
\begin{definition}\emph{\cite[Satz 2]{Weilsatz}} \label{defn: reducpolar}A polarization $\theta\in\mathcal{P}(\A)$ is called \emph{reducible} (or \emph{decomposable}) if $\theta =
\cl(E_1 + E_2)$ for some elliptic curves $E_1$ and $E_2$ on $\A$.
\end{definition} The refined Humbert invariant provides a convenient way to detect the geometric type of principally polarized abelian surface, whether it is a Jacobian surface $(\mathcal{J(C)}, \theta_{\C})$ or a split surface $(E_1\times E_2,\theta_{E_1,E_2})$.
\begin{proposition}\emph{\cite[Proposition 6]{KaniMJ}} \label{irreducibilitycriteria}
Let $q_{(\A,\theta)}$ be the refined Humbert invariant  of the principally polarized abelian surface $(\A,\theta)$, then we have that
\begin{equation}
\theta \text{  is irreducible if and only if } q_{(\A,\theta)} \text{ does not represent } 1.
\end{equation}
\end{proposition}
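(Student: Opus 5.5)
The plan is to work with the explicit formula $q_{(\A,\theta)}(D)=(D\cdot\theta)^2-2(D\cdot D)$ from Definition~\ref{RHIdefn}, together with the isomorphism $\Phi_\theta:\NS(\A)\xrightarrow{\sim}\End_\theta(\A)$ of \eqref{morphismEndNS}. The idea is to translate "$q$ represents $1$" into the existence of a nontrivial Rosati-symmetric idempotent. The two directions are treated separately; the forward one is a direct computation.

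\emph{Reducible $\Rightarrow$ $q$ represents $1$.} Suppose $\theta=\cl(E_1+E_2)$ for elliptic curves $E_1,E_2\subset\A$. Then $(E_i\cdot E_i)=0$, since these are elliptic curves on an abelian surface. Also $(E_1\cdot E_2)=1$, because $(\theta\cdot\theta)=2$. Hence $(E_1\cdot\theta)=1$ and
\[
q_{(\A,\theta)}(E_1)=1^2-2\cdot 0=1 .
\]

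\emph{$q$ represents $1$ $\Rightarrow$ reducible.} Let $D\in\NS(\A)$ satisfy $(D\cdot\theta)^2-2(D\cdot D)=1$.

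Step 1: normalise $D$. The integer $D\cdot\theta$ is odd, and replacing $D$ by $D+n\theta$ changes it by $2n$ without changing $q$. So after a translate and possibly $D\mapsto -D$, we may assume $(D\cdot\theta)=1$, and then $(D\cdot D)=0$. The complementary class $D'=\theta-D$ has the same two invariants: $(D'\cdot\theta)=1$ and $(D'\cdot D')=2-2+0=0$.

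Step 2: pass to endomorphisms. Put $\mu=\Phi_\theta(D)=\theta^{-1}\phi_D\in\End_\theta(\A)$. I will use the standard fact (Mumford's Riemann--Roch for $\deg\phi_{D-x\theta}$, as in Kani) that a symmetric $\mu$ attached to $D$ on a principally polarised surface satisfies
\[
\mu^2-(D\cdot\theta)\,\mu+\tfrac12(D\cdot D)=0 .
\]
This is consistent with the trace formula of Proposition~\ref{RHInewdefn}, whose discriminant is exactly $q$. With our normalisation the relation reads $\mu^2=\mu$. Moreover $\mu\neq0,1$, since otherwise $D\equiv0$ or $D\equiv\theta$, contradicting $(D\cdot\theta)=1$.

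Step 3: split the surface. Since $\mu$ is a nontrivial idempotent, $\A\cong E_1\times E_2$ with $E_1=\operatorname{im}\mu$ and $E_2=\operatorname{im}(1-\mu)=\ker\mu$. Each $E_i$ is one-dimensional, hence an elliptic curve. Because $\mu$ is Rosati-symmetric, $E_1$ and $E_2$ are orthogonal for $\theta$, so $\theta$ restricts to principal polarisations on both factors and is the product polarisation. Equivalently, $\theta=\cl(E_1\times0+0\times E_2)$, which is reducible in the sense of Definition~\ref{defn: reducpolar}.

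I expect the main obstacle to be justifying the quadratic relation in Step 2 and the orthogonal splitting in Step 3 in arbitrary characteristic. For the relation, I would first try the route via $\chi(D-x\theta)^2=\deg\phi_{D-x\theta}=\deg(\mu-x)$, identifying the characteristic polynomial of $\mu$. For the splitting, I would show directly that $\theta|_{E_1}$ has degree $(E_1\cdot\theta)=1$ using $D\equiv E_1$.
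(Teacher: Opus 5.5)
The paper gives no proof of its own here; it only cites Kani. The standard argument behind that citation works with curve classes in $\NS(\A)$. After the same normalisation $(D\cdot\theta)=1$, $(D\cdot D)=0$, one uses that a primitive class of self-intersection $0$ and positive $\theta$-degree is the class of an elliptic curve on $\A$. Applying this to $D$ and to $\theta-D$ gives $\theta\equiv E+E'$, which is literally the definition of reducibility.

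You instead transport $D$ to $\End_\theta(\A)$ and use the splitting criterion for principally polarized abelian varieties via symmetric idempotents. This is a correct and more self-contained alternative. It needs only $\deg\phi_L=\chi(L)^2$, positivity of the Rosati involution, and the elementary decomposition $\A\cong\operatorname{im}\mu\times\operatorname{im}(1-\mu)$. It also produces the polarized isomorphism $(\A,\theta)\cong(E_1,\theta|_{E_1})\times(E_2,\theta|_{E_2})$ explicitly. The $\NS$ route, by contrast, ties the statement to the curve--class dictionary that also underlies Theorem~\ref{SplittingTheorem}.

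Two points need repair.

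\emph{The quadratic relation in Step~2.} The route you sketch yields only $P_\mu(x)=f(x)^2$, with $f(x)=x^2-(D\cdot\theta)x+\tfrac12(D\cdot D)$. Cayley--Hamilton on $T_\ell\A$ then gives only $f(\mu)^2=0$. Since $\End^0(\A)$ may contain nilpotents, you must add two facts: $f(\mu)$ is Rosati-symmetric, and $\tr(\nu\nu^{\dagger})>0$ for $\nu\neq0$ excludes a nonzero symmetric $\nu$ with $\nu^2=0$. Alternatively, from $\mu^2(\mu-1)^2=0$ alone, $e=(3-2\mu)\mu^2$ is already an integral, symmetric, nontrivial idempotent, because $(3-2x)x^2+(2x+1)(x-1)^2=1$.

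\emph{The identification of $D$.} Your identification is reversed. $\phi_D$ kills the curve representing $D$, so $D\equiv\ker\mu$ and $\theta-D\equiv\operatorname{im}\mu$. For instance, on $E\times E'$ with the product polarization and $D=E\times\{0\}$, one gets $\mu=\operatorname{diag}(0,1)$, whose image is $\{0\}\times E'$. This is harmless, as both classes have $\theta$-degree $1$.

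For the orthogonality in Step~3, the cleanest check is to pull $\phi_\theta$ back along $E_1\times E_2\xrightarrow{\sim}\A$. Symmetry of $\operatorname{diag}(1,0)$ forces the off-diagonal entries to vanish. Total degree $1$ then forces both diagonal entries to be isomorphisms.
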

If $\C$ is a  curve of genus $2$, its Jacobian $\mathcal{J(C)}$ has the canonical principal polarization $\theta_{\C}\in \NS(\mathcal{J(C)})$. In this case, we simply denote its refined Humbert invariant by $q_{\C}:=q_{(\mathcal{J(C)},\theta_{\C})}$.
\begin{definition}
    The \emph{degree map} is defined as a positive definite integral quadratic form (up to equivalence) denoted by $q_{E_1, E_2}$ such that $q_{E_1,E_2}(\varphi)= \deg(\varphi)$ for any $\varphi \in\Hom(E_1,E_2)$.
\end{definition}
The quadratic form $q_{E_1,E_2}$ is intrinsic on $\Hom(E_1,E_2)$; the phrase ``up to equivalence'' refers to its expression after choosing a $\Z$-basis.
For later use, let $\beta_d$ denote its associated bilinear form,
\[
\beta_d(\varphi,\varphi')=q_{E_1,E_2}(\varphi+\varphi')-q_{E_1,E_2}(\varphi)-q_{E_1,E_2}(\varphi').
\]

Now we focus on the case of products of two elliptic curves. Let $\A=E_1\times E_2$ and let $p_i:\A\rightarrow E_i$ be the natural projections. We write $\theta_i=p_i^*(0_{E_i})$ for $i=1,2$.

\begin{theorem} \emph{\cite[Proposition 23]{KaniMJ}}
Let $\A= E_1 \times E_2$ be a product surface. We have a group isomorphism 
\begin{align} \label{mainisomorphism}
    \mathcal{D}= \mathcal{D}_{E_1,E_2} : \Z \oplus \Z \oplus \Hom(E_1,E_2) \longrightarrow \NS(\A) \\\empty
    \mathcal{D}(a, b, \varphi) = (a-\deg (\varphi))\theta_1+(b-1)\theta_2 + \Gamma_{-\varphi}
\end{align}
where $\theta_i = p_i^{*}(0_{E_i})$, and $\Gamma_{-\varphi}$ is the graph of $-\varphi$. Then the rule $(a, b, \varphi)\rightarrow  \mathcal{D}(a, b, \varphi) \in \NS(\A)$ defines a group
isomorphism. Moreover, for two divisors $D_1= \mathcal{D}(a,b,\varphi)$ and $D_2= \mathcal{D}(a',b',\varphi')$ in $\NS(\A)$, the intersection number of the divisors is given by 
\begin{equation}\label{intersectionoftwodivisors}
    (D_1 \cdot D_2) = ab'+ a'b - \beta_d(\varphi,\varphi')
\end{equation}
where $\beta_d$  is the bilinear form associated to the $q_{E_1,E_2}$ on $\Hom(E_1,E_2)$. Thus
$$(\mathcal{D}(a, b, \varphi)\cdot\mathcal{D}(a, b, \varphi)) = 2(ab - \deg(\varphi)), \quad (\mathcal{D}(a, b, \varphi)\cdot(x\theta_1 + y\theta_2)) = bx + ay.$$
\end{theorem}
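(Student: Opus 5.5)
The plan is to reduce everything to the intersection numbers of three kinds of basic curves on $\A=E_1\times E_2$: the fibres $\theta_1=\{0\}\times E_2$ and $\theta_2=E_1\times\{0\}$, and the graphs $\Gamma_\psi=\{(x,\psi(x))\}$ for $\psi\in\Hom(E_1,E_2)$. Both the isomorphism statement and formula \eqref{intersectionoftwodivisors} follow from these numbers by bilinear algebra.

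\textbf{Basic intersection numbers.} I would compute these first.
\begin{itemize}
\item Since the fibres of a projection are disjoint translates and the two rulings meet transversally in one point, $(\theta_1\cdot\theta_1)=(\theta_2\cdot\theta_2)=0$ and $(\theta_1\cdot\theta_2)=1$.
\item $\Gamma_\psi$ meets $\theta_1$ transversally in the single point $(0,0)$, so $(\theta_1\cdot\Gamma_\psi)=1$.
\item $\Gamma_\psi\cap\theta_2$ is $\ker\psi\times\{0\}$ with the correct multiplicities, so $(\theta_2\cdot\Gamma_\psi)=\deg\psi$. For inseparable $\psi$, I would argue via pullback of $0_{E_2}$ along $\psi$, which has degree $\deg\psi$.
\item $\Gamma_\psi\cong E_1$ is an elliptic curve on an abelian surface, so adjunction (trivial canonical class) gives $(\Gamma_\psi\cdot\Gamma_\psi)=0$.
\item For $\psi\neq\psi'$, $\Gamma_\psi\cap\Gamma_{\psi'}$ is the scheme $\ker(\psi-\psi')$, so $(\Gamma_\psi\cdot\Gamma_{\psi'})=\deg(\psi-\psi')$. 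This also holds for $\psi=\psi'$, since $\deg 0=0$.
\end{itemize}
With the definition of $\beta_d$, the last item reads $(\Gamma_{-\varphi}\cdot\Gamma_{-\varphi'})=\deg\varphi+\deg\varphi'-\beta_d(\varphi,\varphi')$.

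\textbf{Intersection formula.} Write $D_1=A\theta_1+B\theta_2+\Gamma_{-\varphi}$ with $A=a-\deg\varphi$ and $B=b-1$, and similarly $D_2$ with $A',B',\varphi'$. Expanding bilinearly gives
\[
AB'+A'B+A+A'+B\deg\varphi'+B'\deg\varphi+\deg\varphi+\deg\varphi'-\beta_d(\varphi,\varphi').
\]
Substituting $A,B,A',B'$, the terms in $\deg\varphi$ and $\deg\varphi'$ should cancel, leaving $ab'+a'b-\beta_d(\varphi,\varphi')$. The two special cases stated afterwards follow by taking $D_2=D_1$, and by taking $D_2=x\theta_1+y\theta_2=\mathcal{D}(y,x,0)$ (note the swap of coordinates, since $\mathcal{D}(a,b,0)=a\theta_1+(b-1)\theta_2+\Gamma_0$ and $\Gamma_0=\theta_2$).

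\textbf{Additivity and bijectivity.} This is the main obstacle, because $\psi\mapsto\Gamma_\psi$ is not additive in $\Div(\A)$. I would invoke the classical description $\NS(E_1\times E_2)\cong\Z\theta_1\oplus\Z\theta_2\oplus\Hom(E_1,E_2)$, obtained from divisorial correspondences (Mumford, or $\NS(\A)\cong\End_{\theta}(\A)$ via \eqref{morphismEndNS} for the product polarization). In that description, the class of $\Gamma_{-\varphi}$ minus suitable multiples of $\theta_1,\theta_2$ is additive in $\varphi$, and $\mathcal{D}$ is this map composed with a unimodular triangular change of coordinates.

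To avoid pinning down the normalization, I would argue directly from the formula. Since intersection numbers determine classes in $\NS$ (numerical equivalence), additivity of $\mathcal{D}$ follows because \eqref{intersectionoftwodivisors} is bilinear in $(a,b,\varphi)$ on the generating set $\{\theta_1,\theta_2,\Gamma_\psi\}$. Injectivity follows because the form $ab'+a'b-\beta_d$ is nondegenerate on the source. Surjectivity follows from the rank count $\rho=2+\rank\Hom(E_1,E_2)$ together with a discriminant comparison, or from the classical fact that $\NS(\A)$ is generated by fibres and graphs. I expect the subtle point to be checking that this generation statement, and the inseparable-degree multiplicities above, hold in characteristic $p$; I would cite Kani's treatment for that.
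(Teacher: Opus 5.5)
The paper gives no proof of this statement; it only cites Kani \cite[Proposition 23]{KaniMJ}. Your proposal therefore has to stand on its own, and in substance it does, since it follows the standard argument: intersection numbers of fibres and graphs, plus the structure of $\NS$ of a product. Your basic intersection numbers are correct, including $(\Gamma_{-\varphi}\cdot\Gamma_{-\varphi'})=\deg(\varphi'-\varphi)=\deg\varphi+\deg\varphi'-\beta_d(\varphi,\varphi')$. Substituting $A=a-\deg\varphi$, $B=b-1$ and their primed versions into your expansion does cancel every term in $\deg\varphi$ and $\deg\varphi'$, leaving $ab'+a'b-\beta_d(\varphi,\varphi')$.

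There is one slip: there is no swap in the last special case. Since $\Gamma_0=\theta_2$, your own computation gives $\mathcal{D}(x,y,0)=x\theta_1+y\theta_2$, and the formula then yields $ay+bx$ as stated. With $\mathcal{D}(y,x,0)$ you would get $ax+by$, which is wrong. You can check directly that $(\mathcal{D}(a,b,\varphi)\cdot\theta_1)=(b-1)+1=b$ and $(\mathcal{D}(a,b,\varphi)\cdot\theta_2)=(a-\deg\varphi)+\deg\varphi=a$.

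For the isomorphism, injectivity from the nondegeneracy of $ab'+a'b-\beta_d$ is fine, and it does not even need additivity. Of your two routes to surjectivity, however, only the second works. The rank count plus a discriminant comparison is circular unless $\disc\NS(\A)$ is known independently, and Kani's determinant formula for $\NS(E\times E')$, quoted in the appendix, is deduced from this very isomorphism.

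The integral input you actually need is the seesaw (divisorial-correspondence) exact sequence $0\to\Z\theta_1\oplus\Z\theta_2\to\NS(\A)\to\Hom(E_1,E_2)\to 0$. Here a class $L$ goes to the homomorphism $x\mapsto L|_{\{x\}\times E_2}\otimes L|_{\{0\}\times E_2}^{-1}\in\Pic^0(E_2)\cong E_2$, so that $[\Gamma_\psi]\mapsto\psi$ and $\theta_1,\theta_2\mapsto 0$.

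With this sequence, everything follows from your numbers.
\begin{itemize}
\item Additivity: the defect $\mathcal{D}(u+w)-\mathcal{D}(u)-\mathcal{D}(w)$ maps to $0$ in $\Hom(E_1,E_2)$, so it lies in $\Z\theta_1\oplus\Z\theta_2$. By your bilinear formula it pairs to zero with both $\theta_1=\mathcal{D}(1,0,0)$ and $\theta_2=\mathcal{D}(0,1,0)$, hence it vanishes.
\item Surjectivity: the image of $\mathcal{D}$ contains $\theta_1$, $\theta_2$ and every $\Gamma_\psi=\mathcal{D}(\deg\psi,1,-\psi)$, and the sequence shows these generate $\NS(\A)$.
\end{itemize}
Make this citation explicit in place of the discriminant argument, and the proof is complete.
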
 
If $\A=E_1 \times E_2$ is a product surface, where $E_1\sim E_2$ are isogenous, and elliptic curves $E_i$ for $i\in \{1,2\}$ have extra endomorphisms, then via the isomorphism $\mathcal{D}$ above, we can connect the intersection form $q_{\A}$ to the degree map $q_{E_1, E_2}$ on $\Hom(E_1, E_2)$ as
\[
q_{\A}(\mathcal{D}(x,y,\varphi))=xy-q_{E_1,E_2}(\varphi).
\]
Note that $q_{\A}$ is an indefinite integral quadratic form in $6$ variables in the case of our interest.

We can characterize principal polarizations in the following simple description.
\begin{proposition} \emph{ \cite[Corollary 25]{KaniMJ} }\label{polarizationcondition}
Let $\A=E_1 \times E_2$ be a product surface. Let $D = \mathcal{D}(a,b,\varphi) \in \NS(\A)$. Then $D \in \mathcal{P}(\A)$ if and only if $a > 0$ and $ab - \deg(\varphi) = 1$. Thus, every principal polarization of $\A$ has the form $\mathcal{D}(n_1,n_2,\varphi)$ with $\varphi\in \Hom(E_1,E_2)$ and $n_1,n_2 >0$ with $n_1 n_2- \deg(\varphi)=1$.
\end{proposition}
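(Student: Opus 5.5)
The argument uses the isomorphism $\mathcal{D}$ of \cite[Proposition 23]{KaniMJ} together with the intersection formula \eqref{intersectionoftwodivisors}. By Definition~\ref{polarizations}, a class $D=\mathcal{D}(a,b,\varphi)$ is a principal polarization exactly when $D$ is ample and $(D\cdot D)=2$. The self-intersection formula gives $(D\cdot D)=2(ab-\deg\varphi)$. So the numerical condition $(D\cdot D)=2$ is precisely $ab-\deg(\varphi)=1$, and what remains is to show that, given this, ampleness is equivalent to $a>0$.

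For ampleness I would use the Riemann--Roch/Mumford criterion on abelian surfaces. A class $D$ with $(D\cdot D)>0$ is ample if and only if $(D\cdot H)>0$ for one (equivalently every) ample class $H$. The underlying fact is that $\{D:(D\cdot D)>0\}$ has two connected components, interchanged by $D\mapsto -D$; the ample cone is the component containing an ample class (Nakai--Moishezon plus the Hodge index theorem on the surface). As test class I take the reducible product polarization $H=\theta_1+\theta_2$, which is ample since it is the pullback of ample divisors along the two projections with $(H\cdot H)=2>0$. The intersection formula gives $(\mathcal{D}(a,b,\varphi)\cdot(\theta_1+\theta_2))=a+b$.

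It remains to show that, under $ab-\deg\varphi=1$, the conditions $a+b>0$ and $a>0$ agree. Since $\deg\varphi\ge0$, we get $ab\ge1$, so $a$ and $b$ are nonzero and have the same sign. Hence $a>0$ iff $b>0$ iff $a+b>0$. This proves $D\in\mathcal{P}(\A)$ iff $a>0$ and $ab-\deg(\varphi)=1$. The final sentence of the proposition is then just a restatement, using the surjectivity of $\mathcal{D}$ to write every class as some $\mathcal{D}(n_1,n_2,\varphi)$, with $n_2>0$ forced as above.

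The main obstacle is justifying the ampleness criterion in arbitrary characteristic, in particular that positivity of $(D\cdot D)$ and of $(D\cdot H)$ suffices. I would first try the Riemann--Roch argument: $\chi(D)=(D\cdot D)/2=1>0$ forces $D$ or $-D$ to be effective up to numerical equivalence. The sign of $(D\cdot H)$ picks out the effective one. An effective divisor with positive self-intersection on an abelian surface is ample, since effective divisors have no negative curves there and Nakai--Moishezon applies.
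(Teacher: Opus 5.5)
Your proof is correct and follows essentially the same route as the cited result of Kani (the paper gives no proof of its own): the intersection formula gives $(D\cdot D)=2(ab-\deg\varphi)$, and on an abelian surface a class of positive self-intersection is ample exactly when it meets a fixed ample class such as $\theta_1+\theta_2$ positively, which reduces to $a>0$ because $ab\geq 1$. The one step to tighten is the last: the absence of curves with negative self-intersection only shows that an effective $D$ is nef, so you should invoke the Hodge index theorem explicitly to rule out $(D\cdot C)=0$ for a curve $C$, since with $(D\cdot D)>0$ that would force $(C\cdot C)<0$.
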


For elliptic curves $E/K$ and $E'/K$, let $\theta_E$ and $\theta_{E'}$ denote their canonical principal polarizations. We write $\theta_{\A}=\theta_E\otimes\theta_{E'}$ for the product principal polarization on $\A=E\times E'$.
For a product surface with the reducible polarization, the refined Humbert invariant is expressed in terms of the degree quadratic form on $\Hom(E, E')$ as follows.

\begin{lemma} \emph{\cite [Lemma 21]{KaniESCII}}
Let $E/K$ and $E'/K$ be elliptic curves over an arbitrary field $K$, and let $\A = E\times E'$ be the product surface with the product polarization $\theta_{\A} =\theta_E \otimes \theta_{E'}$. For $a,b\in \Z$, and $\varphi \in \Hom(E,E')$,
\begin{equation}\label{RHI=x2+4}
q_{(\A,\theta_{\A})}(\mathcal{D}(a, b, \varphi) + \Z\theta_{\A}) =(a-b)^2+4q_{E,E'}(\varphi),
\end{equation}
$q_{E,E'}$ denotes the degree form on $\Hom(E, E')$.
\end{lemma}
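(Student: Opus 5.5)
The plan is to compute $q_{(\A,\theta_\A)}$ directly from Definition~\ref{RHIdefn}, using the intersection formula \eqref{intersectionoftwodivisors} for $\A=E\times E'$. First we identify the product polarization $\theta_\A$ in the coordinates of $\mathcal{D}$. In the notation of the preceding theorem, $\theta_E\otimes\theta_{E'}$ is the class of $E\times\{0\}+\{0\}\times E'$, which is $\theta_1+\theta_2$ (up to the labelling of $\theta_1=p_1^*(0_E)$ and $\theta_2=p_2^*(0_{E'})$). Taking $\varphi=0$ in the definition of $\mathcal{D}$ gives $\mathcal{D}(a,b,0)=a\theta_1+(b-1)\theta_2+\Gamma_0$. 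Since $\Gamma_0=E\times\{0\}=\theta_2$, this equals $a\theta_1+b\theta_2$. Hence $\theta_\A=\mathcal{D}(1,1,0)$. As a consistency check, Proposition~\ref{polarizationcondition} applies: $a=1>0$ and $ab-\deg 0=1$.

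Next we compute the two intersection numbers entering Definition~\ref{RHIdefn} for $D=\mathcal{D}(a,b,\varphi)$. By \eqref{intersectionoftwodivisors}, or the displayed specialisations after it,
\[
(D\cdot D)=2(ab-\deg\varphi), \qquad (D\cdot\theta_\A)=a\cdot 1+1\cdot b-\beta_d(\varphi,0)=a+b .
\]
Here $\beta_d(\varphi,0)=0$ because $\beta_d$ is bilinear. Substituting into $q(D+\Z\theta_\A)=(D\cdot\theta_\A)^2-2(D\cdot D)$ gives
\[
(a+b)^2-4ab+4\deg\varphi=(a-b)^2+4q_{E,E'}(\varphi),
\]
since $\deg\varphi=q_{E,E'}(\varphi)$ by definition of the degree map. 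This is the statement.

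Finally we check well-definedness on the quotient. Replacing $(a,b,\varphi)$ by $(a+n,b+n,\varphi)$ corresponds to adding $n\theta_\A$, because $\mathcal{D}$ is a group homomorphism. This replacement leaves $a-b$ and $\varphi$ unchanged, which matches the invariance $\tilde q(D+n\theta)=\tilde q(D)$. Because $\mathcal{D}$ is an isomorphism, every class in $\NS(\A,\theta_\A)$ arises in this way, so the formula describes the whole form.

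The only real obstacle is the first step: pinning down $\theta_\A=\mathcal{D}(1,1,0)$ with the paper's conventions. This means checking that $\Gamma_0$ is $\theta_2$ rather than $\theta_1$ and that the shifts $-\deg\varphi$ and $-1$ in $\mathcal{D}$ cancel as claimed. If the labelling turned out to be swapped, the same computation would still give $(a-b)^2$, by symmetry in $a$ and $b$. The rest of the argument is a one-line substitution.
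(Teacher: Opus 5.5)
Your proof is correct. The paper gives no argument of its own beyond citing Kani, and your direct substitution of $\theta_\A=\theta_1+\theta_2=\mathcal{D}(1,1,0)$, $(D\cdot\theta_\A)=a+b$ and $(D\cdot D)=2(ab-\deg\varphi)$ from \eqref{intersectionoftwodivisors} into Definition~\ref{RHIdefn} is the natural derivation the paper's setup supports.

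The one thing to drop is your closing hedge. The labelling is forced, because $\Gamma_0=E\times\{0\}=p_2^*(0_{E'})=\theta_2$, which is also what the displayed specialisations after \eqref{intersectionoftwodivisors} encode. In any case, symmetry in $a,b$ would not rescue a mislabelling: it would change the triple representing $\theta_\A$ (to $\mathcal{D}(0,2,0)$) rather than merely swap $a$ and $b$.
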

We state some useful facts used in our computations.
\begin{corollary}\emph{\cite[Lemma 28]{KaniESCI}}
The determinant of the Néron-Severi group of $E\times E'$ with respect to the intersection form is given by
$$ \det(\NS(E \times E')) = (-1)^{\rho -1} \det(\Hom(E,E'), \beta_d),$$
where $\rho =\rank(\NS(E\times E')) = \rank(\Hom(E, E'))+2$.
\end{corollary}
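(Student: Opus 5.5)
The statement is Kani's determinant identity $\det(\NS(E\times E'))=(-1)^{\rho-1}\det(\Hom(E,E'),\beta_d)$, and I would prove it directly from the isomorphism $\mathcal{D}$ of \cite[Proposition 23]{KaniMJ} together with the intersection formula \eqref{intersectionoftwodivisors}. Fix a $\Z$-basis $\varphi_1,\dots,\varphi_r$ of $\Hom(E,E')$, so $r=\rank\Hom(E,E')$. Since $\mathcal{D}$ is a group isomorphism $\Z\oplus\Z\oplus\Hom(E,E')\to\NS(\A)$, the classes $D_a=\mathcal{D}(1,0,0)$, $D_b=\mathcal{D}(0,1,0)$, $D_i=\mathcal{D}(0,0,\varphi_i)$ form a $\Z$-basis of $\NS(\A)$. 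This already gives $\rho=r+2$.

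Next I would compute the Gram matrix of the intersection pairing in this basis using $(\mathcal{D}(a,b,\varphi)\cdot\mathcal{D}(a',b',\varphi'))=ab'+a'b-\beta_d(\varphi,\varphi')$. This gives $(D_a\cdot D_a)=(D_b\cdot D_b)=0$ and $(D_a\cdot D_b)=1$. The mixed terms vanish: $(D_a\cdot D_i)=(D_b\cdot D_i)=0$. Finally, $(D_i\cdot D_j)=-\beta_d(\varphi_i,\varphi_j)$. So the Gram matrix is block diagonal,
\[
\begin{pmatrix}0&1\\1&0\end{pmatrix}\oplus\bigl(-B\bigr),\qquad B=(\beta_d(\varphi_i,\varphi_j))_{i,j},
\]
that is, a hyperbolic plane orthogonally summed with the negative of the degree lattice.

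Taking determinants gives $\det\begin{pmatrix}0&1\\1&0\end{pmatrix}\cdot\det(-B)=(-1)\cdot(-1)^{r}\det B=(-1)^{r+1}\det B$. Substituting $r=\rho-2$ gives $(-1)^{\rho-1}\det(\Hom(E,E'),\beta_d)$, which is the claim. The determinant of a lattice does not depend on the choice of basis, so the identity holds independently of the choices made.

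The only genuine point to check is normalization. ``The determinant of $\NS$ with respect to the intersection form'' must mean the Gram determinant of the bilinear pairing $(D_1\cdot D_2)$, matching the use of $\beta_d$ rather than $q_{E,E'}$ on the other side. If one instead used the matrix of $q_\A=\tfrac12(D\cdot D)$, the two sides would differ by the factors $2^{\rho}$ and $2^{r}$. So I would state explicitly that both determinants are taken for the associated bilinear forms, and then the computation above is complete.
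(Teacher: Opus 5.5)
Your proof is correct and is the intended derivation. The paper gives no argument of its own beyond citing Kani, but it places the statement as a corollary of the isomorphism $\mathcal{D}$ and the intersection formula \eqref{intersectionoftwodivisors}, which exhibit $\NS(E\times E')$ as the orthogonal sum of a hyperbolic plane and $(\Hom(E,E'),-\beta_d)$, exactly as in your Gram-matrix computation; your choice of bilinear Gram determinants on both sides also matches the paper's convention, since under it the subsequent determinant formula gives $\disc(q_5)=2^9p^2$.
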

\begin{proposition}\emph{\cite[Lemma 30]{KaniESCI}}
Let $\rho = \rank(\NS(\A))$. Then the determinant of the quadratic module $(\NS(\A, \theta ), q_{(\A,\theta )})$ is related to that of the Néron–Severi group by the formula
$$\det(\NS(\A, \theta), q_{(\A,\theta )}) = \frac{1}{2} (-4)^{\rho -1}\det(\NS(\A), (.)).$$
\end{proposition}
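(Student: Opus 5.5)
The plan is to compute both determinants in one adapted basis of $\NS(\A)$ and compare them by a Schur complement. Throughout, write $r=\rho-1$. For the quadratic module $(\NS(\A,\theta),q_{(\A,\theta)})$ I read ``determinant'' as the determinant of the matrix of the associated bilinear form $B(x,y)=q(x+y)-q(x)-q(y)$. For $\NS(\A)$ I read it as the determinant of the intersection matrix $((D_i\cdot D_j))$. The stated formula fixes this choice of normalization, and I will check consistency at the end.

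\emph{Step 1: $\theta$ is primitive.} If $\theta=mD$ with $D\in\NS(\A)$ and $m\ge1$, then $2=(\theta\cdot\theta)=m^2(D\cdot D)$. On an abelian surface self-intersections are even, by Riemann--Roch or by the formula $(\mathcal D\cdot\mathcal D)=2(ab-\deg\varphi)$ in the product case. Hence $(D\cdot D)=2$ and $m=1$. So we may choose a $\Z$-basis $\theta,D_1,\dots,D_r$ of $\NS(\A)$. The classes $\bar D_1,\dots,\bar D_r$ then form a $\Z$-basis of $\NS(\A,\theta)=\NS(\A)/\Z\theta$.

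\emph{Step 2: the bilinear form of $q_{(\A,\theta)}$.} Put $b_i=(D_i\cdot\theta)$, $b=(b_i)$ and $C=((D_i\cdot D_j))$. Expanding $\tilde q_{(\A,\theta)}(x+y)$ gives
\[B(x,y)=2(x\cdot\theta)(y\cdot\theta)-4(x\cdot y).\]
One checks that $B(\theta,y)=4(y\cdot\theta)-4(y\cdot\theta)=0$, so $B$ descends to the quotient. Its Gram matrix in the basis $\bar D_i$ is therefore $G=2bb^{t}-4C$.

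\emph{Step 3: Schur complement.} In the basis $\theta,D_1,\dots,D_r$ the intersection matrix is
\[M=\begin{pmatrix}2&b^{t}\\ b&C\end{pmatrix}.\]
Since the corner entry $2$ is invertible,
\[\det M=2\det\bigl(C-\tfrac12 bb^{t}\bigr).\]
Now $C-\tfrac12bb^{t}=-\tfrac14 G$, and $G$ is $r\times r$, so $\det M=2(-\tfrac14)^{r}\det G$. Rearranging gives
\[\det G=\tfrac12(-4)^{\rho-1}\det M,\]
which is the claim. Both sides are independent of the chosen bases, because a change of basis in $\GL_r(\Z)$ or $\GL_\rho(\Z)$ multiplies the determinant by a square of $\pm1$.

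\emph{Main obstacle.} There is no deep step here. The only real care needed is the normalization of ``determinant of a quadratic module''. If one instead used the Gram matrix $G_q$ with $q=x^tG_qx$, a factor $2^{r}$ would enter. I would settle this by testing the formula against the paper's own conventions in the product case. There $\det(\NS(E\times E'))=(-1)^{\rho-1}\det(\Hom(E,E'),\beta_d)$, and the explicit form $(a-b)^2+4q_{E,E'}(\varphi)$ can be checked directly.
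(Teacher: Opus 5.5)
Your proof is correct. The paper gives no argument of its own here; it only cites \cite[Lemma 30]{KaniESCI}. Your computation is therefore a complete, self-contained proof: choose a basis $\theta,D_1,\dots,D_{\rho-1}$, note that the bilinear form of $q_{(\A,\theta)}$ on the quotient is $G=2bb^{t}-4C$, and take the Schur complement $\det M=2\det(C-\tfrac12 bb^{t})=2(-\tfrac14)^{\rho-1}\det G$.

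The normalization check you deferred does go through, and it confirms your reading of ``determinant'' as the determinant of $B(x,y)=q(x+y)-q(x)-q(y)$ and of $((D_i\cdot D_j))$. Take the product case with the product polarization. In the coordinates $(a,b,\varphi)$ the intersection matrix is $\begin{psmallmatrix}0&1\\1&0\end{psmallmatrix}\oplus(-\beta_d)$, with determinant $(-1)^{\rho-1}\det\beta_d$; this is the paper's Lemma 28. The form $q=(a-b)^2+4q_{E,E'}$ has bilinear matrix $(2)\oplus 4\beta_d$, with determinant $2\cdot 4^{\rho-2}\det\beta_d$. This equals $\tfrac12(-4)^{\rho-1}\cdot(-1)^{\rho-1}\det\beta_d$, as required.

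One simplification: Step 1 does not need evenness of self-intersections. The equation $m^2(D\cdot D)=2$ already forces $m^2\mid 2$, hence $m=1$.
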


We use three different integral quadratic forms: the intersection form $q_{\A}$, the refined Humbert invariant $q_{(\A,\theta)}$, and the degree map $q_{E_1, E_2}$ with their corresponding geometric objects N\'eron-Severi group $\NS(\A)$, the polarized N\'eron-Severi group $\NS(\A,\theta)$, and homomorphism module $\Hom(E_1, E_2)$, respectively. If $\A=E_1 \times E_2$ and $\End(E_i)\cong \mathcal{O}$ is a maximal order in a quaternion algebra $\Bp$ then the corresponding refined Humbert invariant $q_{(\A,\theta)}$ is a positive definite integral quadratic form in $5$ variables, which we refer to as a \emph{quintic} refined Humbert invariant. 

\subsubsection{Elliptic subcovers of abelian surfaces}
\label{subcover}
The curves with elliptic subcovers have a very long history\footnote{The history goes back to Legendre, Jacobi, Bolza, Humbert, Picard, Poincaré, and others. The classical method is given in terms of elliptic integrals in Legendre, Jacobi, Goursat \cite{bolza,brioschi,goursat,jacobi}, and a summary is given by Krazer \cite[Chapter XI]{krazer}.}. The different perspectives of these curves were studied by Hayashida and Nishi~\cite{hayashida1965existence}, Lange \cite{lange1,lange2}, Ibukiyama, Katsura, and Oort \cite{ibukiyama1986supersingular}, Kuhn \cite{kuhn}, Murabayashi \cite{murabayashi}, and Kani \cite{Kani94,kaniexistenceofellipticdiff,kaninumberofellipKani97}.

\begin{definition}
Let $\C$ be a smooth, projective curve of genus $2$ over a field $K$. An \emph{elliptic subcover} is defined as a
finite morphism $f: \C \rightarrow E$ to an elliptic curve $E/ K$ not factoring over a non-trivial isogeny of $E$.  If $f': \C \rightarrow E'$ is another elliptic subcover, then $f'$ is called \emph{equivalent} to $f$ if there exists an isomorphism $\varphi: E \Tilde{\rightarrow} E'$ such that $f' = \varphi \circ f$. 
\end{definition}
 We can suppose that $f$ is minimal\footnote{
Note that these morphisms are called \emph{maximal} by Serre \cite{serre}, and called \emph{optimal} by Kuhn \cite{kuhn}.}.
There is a bijection between the set of equivalence classes of
elliptic subcovers $\mathcal{E}(\C)$ of a genus $2$ curve $\C$ and the set of maximal elliptic subfields of the function field $F = K(\C)$ of $\C$. By \cite{bolza}, we know that $\mathcal{E}(\C)$ has either $0$ or $2$ or infinitely many elements.
For the case that $\C$ has infinitely many elliptic subcovers, it occurs when the Jacobian $\mathcal{J(C)}$ of $\C$ is $K$-isogenous to $E \times E$ for some elliptic curve $E/K$.
The set $\mathcal{E}(\C)$ was first studied by Kani \cite{Kani94} in the case $K = \overline{K}$ ; he proved that the refined Humbert invariant $q_{\C}$ provides a theoretical understanding of $\mathcal{E(\C)}$ \cite[Theorem 4.5]{Kani94}. Furthermore, Kani \cite{KaniESCI} later extended this result to any field as follows.
Recall that the Jacobian $\mathcal{J(C)}$ is called \emph{$(N,N)$-split} if and only if $\C$ has an
elliptic subcover of degree $N$. The criterion this yields is Theorem~\ref{SplittingTheorem},
stated in Section~\ref{sec:prelim} because our algorithms rest on it throughout.

\begin{definition}
    A \emph{presentation $(E, E', \psi)$ of degree $N$} of $\C/K$ for a given elliptic subcover $f : \C \rightarrow E$ of
degree $N$ is a triple containing $E$, another (isogenous) elliptic curve $E'/K$ and an
isomorphism $\psi: E[N] \rightarrow E'[N] $ which is an anti-isometry with respect to the Weil $e_N$
pairings. An invariant  $m_{\psi}$, called \emph{the isogeny defect} corresponding to $\psi$, is defined by
$$m_{\psi} := \min\{m \geq 1 : m\circ\psi = \varphi |_{E[N]} \textrm{ for some } \varphi \in \Hom(E, E')\}.$$
\end{definition}

Recall that $q_{E,E'}$ denotes the degree map on $\Hom(E,E')$, defined by $q_{E,E'}(\varphi)=\deg(\varphi)$ for $\varphi\in\Hom(E,E')$, and let $\det(q_{E,E'})$ denote its determinant.

\begin{theorem}\emph{\cite[Theorem 3]{KaniESCI}}
If $\C/K$ has a presentation $(E, E',\psi)$ of degree $N$ with $\charec(K) \nmid N$ and isogeny
defect $m = m_\psi$, and if $r = \rank(\Hom(E, E'
))\geq 1$, then the refined Humbert invariant $q_{\C}$ is
a positive definite quadratic form of rank $r+1$ which satisfies properties
\begin{itemize}
    \item [(i)]$\det(q_{\C}) = 2^{2r+1}m^2\det(q_{E,E'})$.
    \item [(ii)] $q_{\C}$ primitively represents $N^2$.
    \item [(iii)] $q_{\C}(x_1,\dots,x_{r+1}) \equiv 0, 1 \pmod  4$, \textrm{for all} $x_1,\dots,x_{r+1}\in \Z$.
    \item [(iv)] $q_{\C}(x_1,\dots,x_{r+1}) \neq1$ \textrm{for all } $x_1,\dots,x_{r+1}\in \Z$.
\end{itemize}
\end{theorem}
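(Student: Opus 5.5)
The statement is Kani's \cite[Theorem 3]{KaniESCI}. I would prove it by making the refined Humbert invariant of $\C$ explicit through the presentation. The genus $2$ curve with elliptic subcover $f:\C\to E$ of degree $N$ has Jacobian $\mathcal{J(C)}\cong (E\times E')/\Gamma_\psi$, where $\Gamma_\psi\subset E[N]\times E'[N]$ is the graph of the anti-isometry $\psi$. Let $\pi:E\times E'\to\mathcal{J(C)}$ be the quotient. Then $\pi^*\theta_\C\equiv N\theta_{E\times E'}$, with $\theta_{E\times E'}=\theta_1+\theta_2$, and
\[\pi^*:\NS(\mathcal{J(C)})\hookrightarrow\NS(E\times E')\]
is injective with image the classes whose associated line bundles descend along $\Gamma_\psi$. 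In the coordinates $\mathcal{D}(a,b,\varphi)$ of \eqref{mainisomorphism}, descent should become a congruence condition: $\mathcal{D}(a,b,\varphi)$ descends iff $\varphi|_{E[N]}\equiv c\,\psi$ for a suitable integer $c$ determined modulo $N$ by $a,b$. This is where $m_\psi$ enters. The set of $\varphi$ with $\varphi|_{E[N]}\in\Z\psi$ is a sublattice of $\Hom(E,E')$, and $\varphi\mapsto c \bmod N$ has image $m\Z/N\Z$.

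With this, $\NS(\mathcal{J(C)})$ is an explicit rank $r+2$ sublattice of $\Z^2\oplus\Hom(E,E')$. The intersection form is $(\pi^*D\cdot\pi^*D')=N^2(D\cdot D')$, so I pull everything back. Using \eqref{intersectionoftwodivisors} and the formula for $(\mathcal{D}(a,b,\varphi)\cdot(\theta_1+\theta_2))=a+b$, one gets
\[q_\C(D)=\tfrac1{N^2}\bigl[(a+b)^2-4(ab-\deg\varphi)\bigr]=\tfrac1{N^2}\bigl[(a-b)^2+4\deg\varphi\bigr],\]
which is the expression \eqref{RHI=x2+4} scaled by $N^{-2}$. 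Rank $r+1$ and positive definiteness are then immediate, since the quotient by $\Z\theta_\C$ removes one dimension.

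For (i), I compute the index of the descent sublattice and apply the determinant formula relating $\NS(\A,\theta)$ to $\NS(\A)$, together with $\det\NS(E\times E')=\pm\det(\Hom,\beta_d)$. The index contributes $m^2$ after dividing out the powers of $N$, giving $2^{2r+1}m^2\det(q_{E,E'})$. This index computation is the main obstacle: the bookkeeping of how $m$, rather than $N$, survives is delicate, and I would first check it on the case $m=1$.

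For the remaining parts:
\begin{enumerate}
\item[(ii)] The class $\theta_1=\mathcal{D}(1,0,0)$ pulled back from $E$ is $f^*(0_E)$, which is primitive in $\NS(\mathcal{J(C)})$ and not a multiple of $\theta_\C$. It gives $q_\C=(f^*0\cdot\theta_\C)^2-0=N^2$. Primitivity of $f^*(0_E)$ modulo $\Z\theta_\C$ follows from minimality of $f$.
\item[(iii)] This follows from $q_\C(D)\equiv(D\cdot\theta)^2\pmod 4$, using that $(D\cdot D)$ is even (Theorem~\ref{necessarycondition}).
\item[(iv)] This is Proposition~\ref{irreducibilitycriteria}, since $\theta_\C$ is irreducible.
\end{enumerate}
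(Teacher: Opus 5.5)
The paper gives no proof of this statement; it is quoted from Kani. So your sketch has to stand on its own, and it has a genuine gap at (i), plus a misstep in (ii).

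\textbf{What is sound.} With $\pi\colon E\times E'\to\mathcal{J(C)}$, $\pi^*\theta_\C\equiv N(\theta_1+\theta_2)$ and $\pi^*D=\mathcal{D}(a,b,\varphi)$, one does get $q_\C(D)=\bigl((a-b)^2+4\deg\varphi\bigr)/N^2$. This gives rank $r+1$, positive definiteness, (iii) and (iv).

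\textbf{The gap in (i).} Part (i) is the real content of the theorem and the only place $m_\psi$ enters, and it is not proved. Moreover, the descent criterion you propose for it is wrong. By Mumford's descent theorem, a class descends along $\pi$ if and only if $\Gamma_\psi\subseteq K(L)=\ker\phi_L$ \emph{and} the commutator pairing $e^L$ is trivial on $\Gamma_\psi$. Write $\phi_L$, via $\theta_1+\theta_2$, as the symmetric matrix with diagonal $a,b$ and off-diagonal entries $\pm\hat\varphi,\pm\varphi$. Then the first condition says $\varphi|_{E[N]}=c\psi$ with $c\equiv\pm b$, together with $a+b\equiv0\pmod N$. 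That is the most your congruence can capture.

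The isotropy condition cannot be dropped, and no criterion depending only on $a,b\bmod N$ and $\varphi|_{E[N]}$ can be right. Indeed $\mathcal{D}(N,0,0)=N\theta_1$ and $\mathcal{D}(0,0,0)$ have the same such data. But $e^{N\theta_1}$ restricts to the Weil pairing $e_N$ on $\Gamma_\psi$, so $N\theta_1$ does not descend. It must not: your formula would give it $q_\C$-value $1$, contradicting (iv).

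The first condition alone also gives the wrong determinant. Under $\mathcal{D}(a,b,\varphi)\mapsto(a-b,\varphi)$, which identifies $\NS(E\times E')/\Z(\theta_1+\theta_2)$ with $\Z\oplus\Hom(E,E')$, it cuts out $\{(x,\varphi):\varphi\in H_\psi,\ x\equiv\pm2c\pmod N\}$. Here $H_\psi=\{\varphi:\varphi|_{E[N]}\in\Z\psi\}$ has index $mN^{r-1}$, because $\Hom(E,E')/N\Hom(E,E')$ injects into $\Hom(E[N],E'[N])$ and meets $(\Z/N\Z)\psi$ in $(m\Z/N\Z)\psi$. So this lattice has index $mN^{r}$, and it would give $\det(q_\C)=2^{2r+1}m^2\det(q_{E,E'})/N^2$. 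Statement (i) is equivalent to $\NS(\mathcal{J(C)})/\Z\theta_\C$ having index exactly $mN^{r+1}$. That is, the isotropy condition must remove precisely one further factor of $N$, whatever $m$ is. This computation is the missing step, and checking $m=1$ first does not supply it.

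\textbf{The misstep in (ii).} The class $\theta_1=\mathcal{D}(1,0,0)$ is not in $\pi^*\NS(\mathcal{J(C)})$ at all; your own formula would give it $q_\C$-value $1/N^2$. Also, $f^*(0_E)$ is a divisor on $\C$, not on $\mathcal{J(C)}$. The class you want is $F=(f_*)^*(0_E)$ for the norm map $f_*\colon\mathcal{J(C)}\to E$, i.e.\ the class of the curve $\ker f_*$. With $\pi=f^*p_1+f'^*p_2$ one has $f_*\circ\pi=[N]\circ p_1$, so $\pi^*F=N^2\theta_1$. Hence $(F\cdot\theta_\C)=N$, $(F\cdot F)=0$ and $q_\C(F)=N^2$.

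Primitivity does not follow from minimality alone. Minimality makes $\ker f_*$ connected, hence an elliptic curve. You then need Kani's result that classes of elliptic curves on an abelian surface are primitive in $\NS$. You also need a separate step to pass to $\NS/\Z\theta_\C$:
\begin{itemize}
\item Suppose $[F]=k[G]$ with $k>1$. Then $q_\C(G)=(N/k)^2$, so $k\mid N$.
\item Since $q_\C(G)\equiv(G\cdot\theta_\C)^2\pmod 4$, the parity of $(G\cdot\theta_\C)$ matches that of $N/k$. So you may replace $G$ by $G+n'\theta_\C$ to get $(G\cdot\theta_\C)=N/k$.
\item Then $F=kG+n\theta_\C$ with $(F\cdot\theta_\C)=N$ forces $n=0$. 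This contradicts primitivity of $F$ in $\NS$.
\end{itemize}

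For (iii), the evenness of $(D\cdot D)$ comes from Riemann--Roch, $(D\cdot D)=2\chi(L)$. Theorem~\ref{necessarycondition} is (iii) itself, so citing it there is circular.
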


\begin{theorem}\emph{\cite[Theorem 4]{KaniESCI}}
If $(E, E'
,\psi)$ is a presentation of degree $N$ of a curve $\C/K$ of genus $2$ with $\charec(K)\nmid N$, then $m_\psi = 1$ if and only if $\mathcal{J(C)} \cong E\times E'$.
\end{theorem}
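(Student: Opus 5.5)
The plan is to prove the two implications separately: the direction $m_\psi=1\Rightarrow\mathcal{J(C)}\cong E\times E'$ by an explicit isomorphism, and the converse by a determinant comparison using the preceding \cite[Theorem 3]{KaniESCI} together with the determinant formulas \cite[Lemma 28, Lemma 30]{KaniESCI} recalled above. Throughout I use the standard fact underlying the notion of presentation (Frey--Kani). Since $\psi$ is an anti-isometry for the Weil pairings, its graph $\Gamma_\psi=\{(x,\psi(x)):x\in E[N]\}$ is a maximal isotropic subgroup of $(E\times E')[N]$. The quotient $(E\times E')/\Gamma_\psi$ therefore inherits a principal polarization from $N\theta_{E\times E'}$, and a presentation of $\C$ means that $(\mathcal{J(C)},\theta_\C)\cong\bigl((E\times E')/\Gamma_\psi,\bar\theta\bigr)$.

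\emph{Step 1 ($m_\psi=1$ implies $\mathcal{J(C)}\cong E\times E'$).} Suppose $\psi=\varphi|_{E[N]}$ for some $\varphi\in\Hom(E,E')$. Consider the shear automorphism $\sigma(x,y)=(x,\,y-\varphi(x))$ of $E\times E'$ (as an unpolarized surface), whose inverse is $(x,y)\mapsto(x,y+\varphi(x))$. It carries $\Gamma_\psi$ onto $E[N]\times\{0\}$. Hence
\[(E\times E')/\Gamma_\psi\cong(E\times E')/(E[N]\times 0)\cong E\times E',\]
where the last isomorphism is induced by $[N]\times\mathrm{id}$. Combined with the identification above, this gives $\mathcal{J(C)}\cong E\times E'$, an isomorphism of unpolarized surfaces only, which is all the statement asks for.

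\emph{Step 2 ($\mathcal{J(C)}\cong E\times E'$ implies $m_\psi=1$).} An isomorphism $\mathcal{J(C)}\cong E\times E'$ gives an isometry of N\'eron--Severi lattices with their intersection forms, so $\det\NS(\mathcal{J(C)})=\det\NS(E\times E')$ and $\rho=r+2$ with $r=\rank\Hom(E,E')$. Applying \cite[Lemma 30]{KaniESCI} to $(\mathcal{J(C)},\theta_\C)$ and then \cite[Lemma 28]{KaniESCI} to $E\times E'$ gives
\[\det(q_\C)=\tfrac12(-4)^{r+1}\det\NS(E\times E')=\tfrac12(-4)^{r+1}(-1)^{r+1}\det(\Hom(E,E'),\beta_d)=2^{2r+1}\det(\Hom(E,E'),\beta_d).\]
On the other hand, part (i) of the preceding theorem gives $\det(q_\C)=2^{2r+1}m_\psi^2\det(q_{E,E'})$. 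If $\det(\Hom(E,E'),\beta_d)$ and $\det(q_{E,E'})$ agree, comparing the two expressions yields $m_\psi^2=1$, hence $m_\psi=1$. The case $r=0$ needs no separate treatment, since the preceding theorem already assumes $r\ge1$, and the result is only used in that setting.

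\emph{Main obstacle.} The delicate point is the normalization of determinants. I must check that Kani's $\det$ of a quadratic module (as used in Lemmas 28 and 30 and in Theorem 3(i)) is taken with respect to the same convention for $q_{E,E'}$ as for its bilinear form $\beta_d$. If instead the convention $\det(\beta_d)=2^r\det(q_{E,E'})$ were in force, a stray power of $2$ would appear and the comparison would give $m^2$ equal to a power of $2$. I would therefore first verify the convention on a test case: a product surface with its reducible polarization, for which the form in \eqref{RHI=x2+4} is $(a-b)^2+4q_{E,E'}$ and is computable directly. Any remaining power of $2$ would then be absorbed consistently.

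A secondary point is that Step 2 uses only the unpolarized isomorphism. This is legitimate because Lemma 30 is applied to $\mathcal{J(C)}$ with its own polarization $\theta_\C$, while Lemma 28 concerns only the unpolarized $\NS(E\times E')$.
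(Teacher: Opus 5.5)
The paper gives no argument of its own here; the statement is quoted from \cite[Theorem 4]{KaniESCI}. Your proof is a correct reconstruction from results the paper quotes alongside it, and it makes the logical structure explicit. The implication $m_\psi=1\Rightarrow\mathcal{J(C)}\cong E\times E'$ is elementary: the shear $(x,y)\mapsto(x,y-\varphi(x))$ carries $\Gamma_\psi$ to $E[N]\times 0$, and $[N]\times\mathrm{id}$ identifies the quotient with $E\times E'$. The converse reduces entirely to the determinant formula \cite[Theorem 3(i)]{KaniESCI}. An unpolarized isomorphism gives $\det\NS(\mathcal{J(C)})=\det\NS(E\times E')$, and Lemmas 28 and 30 then give $\det(q_\C)=2^{2r+1}\det(\Hom(E,E'),\beta_d)$, so comparing with Theorem 3(i) forces $m_\psi^2=1$.

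Two points need tidying.

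First, the normalization you single out as the main obstacle is not a real one. In \cite{KaniESCI} the determinant of a quadratic module is always that of the associated bilinear form, so $\det(q_{E,E'})=\det(\Hom(E,E'),\beta_d)$ by definition and no power of $2$ is left over. Your test case confirms this convention for Lemma 30: $(a-b)^2+4q_{E,E'}(\varphi)$ has bilinear Gram matrix $(2)\oplus4\beta_d$, of determinant $2^{2r+1}\det\beta_d$, which is exactly what Lemmas 28 and 30 predict. The paper's $q_5$, with $\det(2G_{q_5})=2^9p^2$, is the case $r=4$, $\det\beta_d=p^2$. Strictly, though, a product-polarized surface has no presentation, so this checks Lemmas 28 and 30 rather than Theorem 3(i). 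What settles the latter is that the source uses one convention throughout.

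Second, your reason for setting aside $r=0$ is backwards: precisely because Theorem 3 assumes $r\geq1$, Step 2 does not cover $r=0$. That case is trivial, however.
\begin{itemize}
\item If $\Hom(E,E')=0$, only $\varphi=0$ is available, so $m\circ\psi=0$ on $E[N]$ forces $N\mid m$, giving $m_\psi=N\geq2$.
\item By Proposition~\ref{polarizationcondition}, the only principal polarization on $E\times E'$ is then $\mathcal{D}(1,1,0)=\theta_1+\theta_2$, which is reducible. So the irreducible $\theta_\C$ cannot be transported to $E\times E'$, and $\mathcal{J(C)}\not\cong E\times E'$.
\end{itemize}
Both sides of the equivalence fail, so it holds. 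Under the paper's definition of a presentation, where $E'$ is taken isogenous to $E$, the case does not arise at all.
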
 

\begin{corollary} \emph{\cite[Corollary 5]{KaniESCI}}
If $\C/K$ has an elliptic subcover $f: \C \rightarrow E$ with $\charec(K)\nmid \deg(f)$, if $\rho = \rank(\NS(\mathcal{J(C)}))\geq 3$, and if $\det(q_{\C})/2^{2\rho-3}$ is square-free, then $\mathcal{J(C)} \cong E \times E'$ for some elliptic curve $E'/K$.
\end{corollary}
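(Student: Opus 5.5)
The plan is to combine the two preceding results of Kani, \cite[Theorem 3]{KaniESCI} and \cite[Theorem 4]{KaniESCI}: a square-free determinant should force the isogeny defect to be $1$.

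First, from the elliptic subcover $f:\C\to E$ of degree $N$ with $\charec(K)\nmid N$, I would produce a presentation $(E,E',\psi)$ of degree $N$. This is the standard construction in \cite{KaniESCI}: take the complementary subcover $f':\C\to E'$ of the same degree. The pullbacks give an isogeny $E\times E'\to\mathcal{J(C)}$ whose kernel is the graph of an anti-isometry $\psi:E[N]\to E'[N]$. I would take this existence from Kani rather than reprove it.

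Next, I would compute the rank $r=\rank\Hom(E,E')$ in terms of $\rho$. Since $\mathcal{J(C)}$ is isogenous to $E\times E'$, the two surfaces have the same Picard number. The isomorphism $\mathcal{D}_{E,E'}$ of \cite[Proposition 23]{KaniMJ}, equivalently the corollary $\rho=\rank\Hom(E,E')+2$, then gives $r=\rho-2$. The hypothesis $\rho\geq3$ therefore gives $r\geq1$, which is exactly what \cite[Theorem 3]{KaniESCI} needs.

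With $r\geq1$, part (i) of \cite[Theorem 3]{KaniESCI} gives
\[
\det(q_{\C})=2^{2r+1}m_\psi^2\det(q_{E,E'}),\qquad 2r+1=2\rho-3 .
\]
Hence $\det(q_{\C})/2^{2\rho-3}=m_\psi^2\det(q_{E,E'})$. Suppose this number is square-free and $\det(q_{E,E'})$ is an integer. Then $m_\psi^2$ divides a square-free integer, so $m_\psi=1$. Finally, \cite[Theorem 4]{KaniESCI}, which applies because $\charec(K)\nmid N$, turns $m_\psi=1$ into $\mathcal{J(C)}\cong E\times E'$.

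The main point needing care is the normalisation of $\det(q_{E,E'})$. I must check that, in Kani's convention, it is a positive integer, for instance the determinant of the Gram matrix of $\beta_d$ or a normalisation differing from it by a power of $2$ that is already absorbed into $2^{2r+1}$. Only then does square-freeness of $m_\psi^2\det(q_{E,E'})$ legitimately force $m_\psi=1$. I would settle this by tracing the convention through Kani's Lemmas 28 and 30 quoted above, which relate $\det(q_{\C})$, $\det\NS$ and $\det(\Hom(E,E'),\beta_d)$. The equality of Picard numbers under isogeny is standard, since $\NS\otimes\Q$ is an isogeny invariant via $\End_\theta\otimes\Q$.
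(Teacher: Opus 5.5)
Your proposal is correct and is exactly the argument behind Kani's Corollary~5, which the paper only cites: with $r=\rho-2\geq1$, Kani's Theorem~3(i) gives $\det(q_{\C})/2^{2\rho-3}=m_\psi^2\det(q_{E,E'})$, square-freeness forces $m_\psi=1$, and Theorem~4 then yields $\mathcal{J(C)}\cong E\times E'$. Your normalization worry resolves in your favour: Kani's $\det(q_{E,E'})$ is the Gram determinant of the integral bilinear form $\beta_d$, hence a positive integer. As a consistency check, the case $m_\psi=1$, $r=4$ matches $\disc(q_5)=2^9p^2$ against $\det(\beta_d)=p^2$ for the norm form on a maximal order.
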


\subsubsection{Automorphism groups}\label{sec:Auto}
The refined Humbert invariant $q_{\C}$ of a genus $2$ curve $\C$ has a close relation to the automorphism group of the curve $\C$, see \cite{kir2023refined,KaniCurvesAbelianSurfaces}. Kani~\cite{KaniCurvesAbelianSurfaces} showed that the automorphism group $\Aut(\C)$ of a genus $2$ curve $\C$ can be determined from its refined Humbert invariant $q_{\C}$ as follows.
\begin{proposition}\emph{\cite[Proposition 26]{KaniCurvesAbelianSurfaces}} \label{auto-rhi-relation}
Let $G$ be a finite group, and let
 $\iota(G) := |\{g \in G : \mathrm{ord}(g) = 2\}|$
denote the number of involutions of $G$. For any integer $n$, let
$r_n(q_{\C}) = |R_n(q_{\C})|$, where $R_n(q_{\C}) = \{ D \in \NS(\mathcal{J(C)},\theta_{\C}) : q_{\C}(D) = n\}$. If $\C/K$ is a  curve of genus $2$ , then
$\iota(\Aut(\C))-1 = r_4(q_{\C})$, the difference being non-negative because the hyperelliptic involution always lies in $\Aut(\C)$. 
Thus, if $r_4(q_{\C}) > 0$, then $\mathcal{J(C)} \sim E_1 \times E_2$ for some elliptic curves $E_i/K$, $i=1,2$. Moreover, if
$r_4(q_{\C}) \geq 3$, then $E_1 \sim E_2$.  
\end{proposition}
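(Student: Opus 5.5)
The plan is to work through the endomorphism description of the refined Humbert invariant, Proposition~\ref{RHInewdefn}. Via the isomorphism $\Phi_\theta$ of \eqref{morphismEndNS}, classes in $\NS(\mathcal{J(C)},\theta_{\C})$ correspond to symmetric endomorphisms modulo $\Z 1_{\A}$. So $R_4(q_{\C})$ is in bijection with classes $\mu+\Z$ of symmetric endomorphisms satisfying $\tr(\mu^2)-\tfrac14(\tr\mu)^2=4$.

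In each such class, normalise to the representative with $\tr(\mu)\in\{0,\pm 2\}$; this is possible because $\tr(\mu)\equiv q\pmod 2$ and $\tr(1_{\A})=4$. For a symmetric $\mu$ on an abelian surface, the analytic characteristic polynomial is a square $(X^2-sX+n)^2$ with real roots. If $\mu'=2\mu-s$ then $q=\deg$-type quantity $s^2-4n$, so $q(\mu)=4$ means $\mu'^2=4$ after translating. Explicitly, $\mu'=2\mu-\tr(\mu)/2$ satisfies $\mu'^2=q(\mu)\cdot 1$. With $q=4$ this gives $\sigma:=\mu'/2$, an endomorphism with $\sigma^2=1$, $\sigma\neq\pm1$, and $\sigma$ Rosati-symmetric. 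Integrality of $\sigma$ is the delicate point. I would argue it from the parity constraint $q\equiv\tr(\mu)^2\pmod 4$ together with $\mu'^2=4$: then $(\mu'-2)(\mu'+2)=0$, and $\ker(\mu'-2)$, $\ker(\mu'+2)$ split $\A$ up to isogeny. Some care is needed to show $\mu'$ is divisible by $2$, i.e.\ that $\mu'$ kills $\A[2]$. I would get this from the idempotent $e=(1+\sigma)/2$ being the symmetric idempotent attached to an elliptic subcover of degree $2$, which is Kani's correspondence between $R_{N^2}$ and degree-$N$ subcovers (Theorem~\ref{SplittingTheorem}, with $N=2$).

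Conversely, a symmetric automorphism $\sigma\neq\pm1$ with $\sigma^2=1$ is an automorphism of $(\mathcal{J(C)},\theta_{\C})$. By Torelli, these are exactly $\Aut(\C)$, and the Rosati involution on automorphisms preserving $\theta$ is inversion, so involutions are symmetric. Thus $\sigma\mapsto\sigma+\Z$ sends the involutions of $\Aut(\C)$ other than the hyperelliptic involution $\iota=-1$ into $R_4$. Here $1$ is excluded because it gives $q=0$. To match $\pm$ signs, note that $\sigma$ and $-\sigma=\iota\sigma$ give classes $\sigma+\Z$ and $-\sigma+\Z$. These have equal $q$-value, and both are involutions in $\Aut(\C)$, so they count separately. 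I would check injectivity: if $\sigma-\sigma'\in\Z$ with $\sigma^2=\sigma'^2=1$, then the traces force $\sigma=\sigma'$. I would also check surjectivity onto $R_4$ using the normalisation above, noting that each $D$ and $-D$ are distinct vectors, just as $\sigma$ and $-\sigma$ are distinct. This gives $\iota(\Aut\C)-1=r_4(q_{\C})$.

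The consequences then follow. If $r_4>0$ there is a degree-$2$ elliptic subcover, by Theorem~\ref{SplittingTheorem} since $4=2^2$ and a vector of norm $4$ is primitive or twice a norm-$1$ vector, which is excluded by Proposition~\ref{irreducibilitycriteria}. Hence $\mathcal{J(C)}\sim E_1\times E_2$. If $r_4\ge3$, there are at least two non-hyperelliptic involution pairs, $\sigma$ and $\tau$ with $\tau\neq\pm\sigma$. The associated idempotent $(1+\tau)/2$ then does not commute-split along the same factors. I would argue that the image of $E_1$ under $\tau$ projects nontrivially to $E_2$, giving a nonzero $\Hom(E_1,E_2)$, so $E_1\sim E_2$. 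The main obstacle I expect is the integrality and divisibility by $2$ step relating norm-$4$ classes to genuine involutions.
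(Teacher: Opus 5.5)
The paper gives no argument of its own here; it simply quotes Kani. Your architecture is sound and self-contained: identify $R_4(q_{\C})$ with $\End_{\theta}(\A)/\Z 1_{\A}$ via $\Phi_{\theta}$, show that each norm-$4$ class contains exactly one $\sigma$ with $\sigma^2=1$ and $\sigma\neq\pm1$, and match these $\sigma$ with the non-hyperelliptic involutions using Torelli and the Rosati-symmetry of $\theta$-preserving involutions. This route avoids a \emph{bijective} version of Theorem~\ref{SplittingTheorem} (degree-$N$ subcovers versus primitive norm-$N^2$ vectors), which an argument through elliptic subcovers would need. Your injectivity, the $\pm$ bookkeeping and the deduction of $\mathcal{J(C)}\sim E_1\times E_2$ are fine. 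More directly, $E_1=(1+\sigma)\A$ and $E_2=(1-\sigma)\A$ are elliptic curves and $E_1\times E_2\to\A$ is an isogeny. However, the step you single out as the main obstacle is immediate, and the fallback you propose for it would not work.

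\textbf{The integrality step.} With the paper's rational trace and $\mu=\Phi_{\theta}(D)$, one has $\tr(\mu)=2(D\cdot\theta)$, $\tr(\mu+n)=\tr(\mu)+4n$, and $q_{\C}(D)\equiv(D\cdot\theta)^2\pmod 4$. So the correct congruence is $q\equiv\tfrac12\tr(\mu)\pmod 2$; yours, $\tr(\mu)\equiv q\pmod 2$, fails for every odd $q$.
\begin{itemize}
\item For $q=4$ this makes $D\cdot\theta$ even, so the class contains a representative with $\tr(\mu)=0$.
\item For that representative $\mu'=2\mu$, and $\mu'^2=4$ gives $\mu^2=1$ because $\End(\A)$ is torsion-free.
\item Thus $\sigma=\mu\in\End(\A)$ outright, and there is no divisibility by $2$ to prove.
\end{itemize}
Your detour cannot supply this. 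Theorem~\ref{SplittingTheorem} only yields \emph{some} degree-$2$ subcover $f$. It does not say that this particular class is the class of $f^*f_*$, which is exactly what you would need to write $\sigma=f^*f_*-1$.

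\textbf{The identity $\mu'^2=q(\mu)$.} This must hold in $\End(\A)$, not merely on eigenvalues. The $\ell$-adic characteristic polynomial of $\mu$ is $P(X)^2$ with $P(X)=X^2-(D\cdot\theta)X+\tfrac12(D\cdot D)$; evaluate $\deg(n-\mu)=\chi(n\theta-D)^2$ to see this. Since the Rosati involution is positive, $\mu$ is semisimple, so $P(\mu)=0$.

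\textbf{The case $r_4\geq3$.} This is only gestured at. Since $r_4$ is even, you get a symmetric $\tau$ with $\tau^2=1$ and $\tau\notin\{\pm1,\pm\sigma\}$.
\begin{itemize}
\item Suppose $(1-\sigma)\tau(1+\sigma)=0$. Applying the Rosati involution gives $(1+\sigma)\tau(1-\sigma)=0$, and subtracting the two gives $2(\tau\sigma-\sigma\tau)=0$.
\item So $\tau$ commutes with $\sigma$ and preserves $E_1$ and $E_2$.
\item It then acts on each $E_i$ by $\pm1$, since $\End(E_i)$ is a domain, which forces $\tau\in\{\pm1,\pm\sigma\}$, a contradiction.
\end{itemize}
Hence $(1-\sigma)\tau$ restricts to a nonzero homomorphism $E_1\to E_2$, and $E_1\sim E_2$. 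Equivalently, if $E_1\not\sim E_2$ then the Picard number is $2$, so $\End_{\theta}(\A)\otimes\Q=\Q+\Q\sigma$. In that case $(x+y\sigma)^2=1$ forces $xy=0$, i.e.\ $\tau\in\{\pm1,\pm\sigma\}$.
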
 

The list of possibilities for $\Aut(\C)$ for any genus $2$ curve is given in the first row of Table~\ref{Tab:Autr4}, see~\cite[Theorem 2]{shaska2004elliptic}.  The number $r_4(q_\C)$ is given in the second row of Table~\ref{Tab:Autr4}.

\begin{table}[H]
    \centering
    \caption{Automorphism groups of a curve $\C$ of genus $2$ with $r_4(q_\C)$}
    \label{Tab:Autr4}
    \begin{tabular}{|c|c|c|c|c|c|c|c|}
    \hline
        $\Aut(\C)$ 
        & $\textrm{C}_2$ 
        & $\textrm{C}_{10}$ 
        & $\textrm{C}_2\times\textrm{C}_2$
        & $\textrm{D}_4$
        & $\textrm{D}_6$
        & $\textrm{C}_3\rtimes\textrm{D}_4$
        & $\GL_2(\mathbb{F}_3)$ \\ \hline
        $r_4(q_{\C})$
        & 0 & 0 & 2 & 4 & 6 & 8 & 12 \\ \hline
    \end{tabular}
\end{table}

\subsection{Real Multiplication, Humbert Surfaces, and Generalized Humbert Sets} \label{RM-Humbert}
Humbert \cite{Humbert} defined an integer invariant for abelian surfaces $\A/ \mathbb{C}$ with  $\End(\A)\supsetneq\Z$, known as the \emph{Humbert invariant}, giving rise to Humbert surfaces $\mathcal{H}_N$. 
The refined Humbert invariant $q_{(\A,\theta)}$, Definition~\ref{RHIdefn}, is an algebraic refinement of this classical invariant and gives an algebraic description of the Humbert surfaces $\mathcal{H}_N$. 
Let $\mathfrak{A}_2(K)$ denote the moduli space of principally polarized abelian surfaces over $K$.

\begin{definition}[Classical Humbert surface] \emph{\cite[Definition~3.7]{Kani94}}
 For a positive integer $N$, the \emph{Humbert surface of discriminant $N$} is
$\mathcal{H}_{N}:=\{\langle \A,\theta\rangle\in\mathfrak{A}_2(K):q_{(\A,\theta)}\text{ primitively represents }N\}$.
This is precisely the classical Humbert surface of discriminant (or invariant) $N$.
\end{definition}
This formulation can be deduced from \cite[\S2, Equation 8]{KaniGeneralizedHumbert}. Therefore, for a positive non-square discriminant $D$, the condition that $q_{(\A,\theta)}$ primitively represents $D$ is equivalent to the existence of an optimal embedding $\mathcal{O}_D\hookrightarrow \End(\A)$ with image contained in $\End_{\theta}(\A)$. Hence, $\mathcal{H}_D$ is precisely the locus of principally polarized abelian surfaces admitting  real multiplication by $\mathcal{O}_D$.

This description of Humbert surfaces in terms of primitive representations by the refined Humbert invariant extends naturally to positive definite quadratic forms in several variables.
Using the refined Humbert invariant, Kani \cite{KaniMJ} introduced subsets of the moduli
space $\mathfrak{A}_2$ of principally polarized abelian surfaces $(\A,\theta)$, called the \textit{generalized Humbert scheme/set}  $\mathcal{H}(q)$, indexed by positive definite integral quadratic forms $q$ in $n \geq 1$ variables. If $n= 1$, they are exactly the Humbert surfaces, and $\mathcal{H}(q)$ helps to determine the components of the intersection of Humbert surfaces. Every  $\mathcal{H}(q)$ is contained in any Humbert surface $\mathcal{H}_N$ such that $q$ primitively represents $N$ \cite[Proposition 5.2]{SubcoversGenus2}.

\begin{theorem}\label{RMtheorem}Let $(\A,\theta)$ be a principally polarized abelian surface over an algebraically closed field $K$, and let $D>0$ be a fundamental discriminant. Then $q_{(\A,\theta)}$ primitively represents $D$ if and only if there exists an embedding $\xi:\mathcal{O}_D\hookrightarrow\End(\A)$ such that $\xi(\mathcal{O}_D)\subseteq\End_{\theta}(\A)$. \end{theorem}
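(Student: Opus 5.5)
The plan is to transport everything through the isomorphism $\Phi_\theta:\NS(\A)\xrightarrow{\sim}\End_\theta(\A)$ of \eqref{morphismEndNS}, under which $\theta\mapsto 1_\A$. By Proposition~\ref{RHInewdefn}, the refined Humbert invariant of the class of a symmetric endomorphism $\mu$ is
\[
q_{(\A,\theta)}(\mu)=\tr(\mu^2)-\tfrac14\tr(\mu)^2 .
\]
For $\mu\in\End_\theta(\A)$ the Rosati-symmetric elements satisfy a quadratic relation $\mu^2-t\mu+n=0$ with $t,n\in\Z$; this is the standard fact for surfaces, since the characteristic polynomial is the square of this quadratic. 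With the rational trace on a surface we get $\tr(\mu)=2t$ and $\tr(\mu^2)=2(t^2-2n)$, hence
\[
q_{(\A,\theta)}(\mu)=t^2-4n=\disc(\mu).
\]
So $q$ computes the discriminant of the minimal polynomial of $\mu$. That $q$ is well defined mod $\Z 1_\A$ is consistent with this, because $\disc(\mu+k)=\disc(\mu)$.

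For the direction ($\Rightarrow$), take a primitive class $[\mu]$ with $q(\mu)=D$. Then $\Z[\mu]\cong\Z[x]/(x^2-tx+n)$ has discriminant $D$. Since $D$ is a fundamental (non-square) discriminant, this ring is $\mathcal{O}_D$, which gives $\xi:\mathcal{O}_D\cong\Z[\mu]\hookrightarrow\End(\A)$. Its image is $\Z+\Z\mu\subseteq\End_\theta(\A)$, because $1_\A$ and $\mu$ are symmetric. Primitivity is not even needed here, since $D$ fundamental forces $\Z[\mu]$ maximal. Positive definiteness of $q$ guarantees $D>0$ and that $\mu\notin\Z$.

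For the direction ($\Leftarrow$), let $\omega$ generate $\mathcal{O}_D=\Z[\omega]$ with $\disc(\omega)=D$, and set $\mu=\xi(\omega)\in\End_\theta(\A)$. Then $\mu$ satisfies the same minimal polynomial, so $q([\mu])=D$. The remaining task is to show $[\mu]$ is primitive in $\End_\theta(\A)/\Z1_\A$. Suppose $[\mu]=m[\nu]$ with $m\ge2$, i.e.\ $\mu=k+m\nu$ with $\nu\in\End_\theta(\A)$. Then $D=q(\mu)=m^2q(\nu)$. By Theorem~\ref{necessarycondition}, $q(\nu)\equiv0,1\pmod4$, so $q(\nu)$ is a discriminant and $D/m^2$ is one too. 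This contradicts $D$ being fundamental.

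The main obstacle is the identity $q=\disc$ with correct normalizations. This requires the quadratic relation for symmetric endomorphisms and that the rational trace of $\mu$ equals twice the reduced trace. In positive characteristic with $\End(\A)$ non-commutative (the superspecial case), one should check that symmetric elements still generate commutative subrings satisfying a degree-$2$ relation. The planned route is via $\NS(\A)\otimes\Q$, where the intersection form gives $\mu^2-t\mu+n=0$ with $t=(D\cdot\theta)$ and $n=\tfrac12(D\cdot D)$ for $\mu=\Phi_\theta(D)$; this recovers $q=t^2-4n$ directly from Definition~\ref{RHIdefn}.
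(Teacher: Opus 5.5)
Your proof is correct, but it does not follow the paper: the paper gives no argument of its own and simply defers to Kani's Proposition~14 and Remark~16 in \cite{KaniESCI}.

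\textbf{Comparison.} You prove the statement directly. You identify $q_{(\A,\theta)}$ on $\End_\theta(\A)/\Z 1_\A$ with the discriminant of the minimal polynomial of a symmetric endomorphism. The theorem for fundamental $D$ then becomes elementary arithmetic of quadratic rings. In ($\Rightarrow$), any $\mu$ with $q(\mu)=D$ gives $\Z[\mu]\cong\mathcal{O}_D$; neither primitivity nor fundamentality is actually needed there. Fundamentality enters only in ($\Leftarrow$), to exclude $[\mu]=m[\nu]$ with $m\geq 2$. There, $q(\nu)\equiv 0,1\pmod 4$ already follows from $q(\nu)=t_\nu^2-4n_\nu$, so Theorem~\ref{necessarycondition} is not needed. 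This shows exactly where the hypothesis on $D$ is used and gives a self-contained argument. What it does not cover is the non-fundamental case, e.g.\ $D=N^2$ as in Theorem~\ref{SplittingTheorem}. There primitivity of $[\mu]$ is a genuine condition, equivalent to $\Z[\mu]=\Q[\mu]\cap\End(\A)$, and a general reference has to handle it.

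\textbf{The step you flag as the main obstacle.} Write $\mu=\Phi_\theta(L)$ with $L\in\NS(\A)$. For $s\in\Z$ one has $s-\mu=\theta^{-1}\phi_{s\theta-L}$. Riemann--Roch then gives
$P_\mu(s)=\deg(s-\mu)=\chi(s\theta-L)^2=\bigl(s^2-ts+n\bigr)^2$, with $t=(L\cdot\theta)$ and $n=\tfrac12(L\cdot L)$.

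Setting $Q=x^2-tx+n$, Cayley--Hamilton yields only $Q(\mu)^2=0$. So your phrase ``since the characteristic polynomial is the square'' needs one more step. The element $\beta=Q(\mu)$ is Rosati-symmetric, and positivity of the Rosati involution ($\tr(\beta\beta^{\dagger})>0$ for $\beta\neq 0$) forces $\beta=0$.

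Both ingredients hold over any algebraically closed field and involve only the commutative ring $\Z[\mu]$. So the non-commutativity of $\End(\A)$ in the superspecial case causes no difficulty. With this in place, $q=t^2-4n$ is literally Definition~\ref{RHIdefn}, and you do not need Proposition~\ref{RHInewdefn} or the trace normalizations.
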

This follows from \cite[Proposition~14 and Remark~16]{KaniESCI}.

\begin{remark}
The formulation of Humbert surfaces used by Kani differs from that adopted by Kumar mainly in the choice of moduli-theoretic description. Kani defines the Humbert surface $\mathcal{H}_D\subset \mathfrak{A}_2$ intrinsically in terms of a principally polarized abelian surface $(\A,\theta)$, requiring the associated Humbert invariant quadratic form $q_{(\A,\theta)}$ to represent $D$ primitively. Kumar instead emphasizes the equivalent real multiplication interpretation, viewing $\mathcal{H}_D$ as the image in $\mathfrak{A}_2$ of the Hilbert modular surface parameterizing principally polarized abelian surfaces equipped with a compatible action of the quadratic order of discriminant $D$. In the square discriminant case, Kumar further restricts attention to the Jacobian locus to relate $\mathcal{H}_{N^2}$ to genus $2$ curves admitting degree-$N$ elliptic subcovers. Thus, Kani's definition is intrinsic to the principally polarized abelian surface and applies independently of whether the polarization is reducible or irreducible, whereas Kumar's genus $2$ interpretation concerns the intersection of the Humbert surface with the Jacobian locus.
\end{remark}

\section{Proof of Theorem~\ref{thm:local-automatic}}\label{app:proof}

\begin{proof}[Theorem~\ref{thm:local-automatic}]
By the preceding paragraph, we may take $q=q_5$ and must show: for every
odd prime $\nu$ and $t\in\Z_\nu$ there is
$\alpha\in\Z_\nu^5\setminus\nu\Z_\nu^5$ with $q_5(\alpha)=t$;
and for $\nu=2$, $p\equiv3\pmod4$, such $\alpha$ exists exactly when
$t\equiv0,1\pmod4$. We use two facts from the classification of quadratic
forms over $\Z_\nu$ in \cite[Ch.~5]{Kitaoka}, where the matrix $G_q$
appears as the matrix $(B(e_i,e_j))$ of the bilinear form $B$ with
$q(y)=B(y,y)$, and a form $q$ in $n$ variables is called \emph{unimodular}
over $\Z_\nu$ when $\det G_q\in\Z_\nu^{\times}$. Two forms are
\emph{equivalent over $\Z_\nu$} if one is obtained from the other by
a substitution $y=Uy'$ with $U\in \GL_n(\Z_\nu)$; this multiplies
$\det G_q$ by $(\det U)^2$ and maps
$\Z_\nu^n\setminus\nu\Z_\nu^n$ onto itself.

\emph{(i) Odd $\nu$.} Put $q_3(x_0,x_1,x_2):=q_5(x_0,x_1,x_2,0,0)
=x_0^2+4x_1^2+4x_2^2$. Then $G_{q_3}=\operatorname{diag}(1,4,4)$ has
determinant $16$, so $q_3$ is unimodular over $\Z_\nu$ for every odd
$\nu$, including $\nu=p$. By \cite[Theorem~5.2.4 and the proof of
Corollary~5.2.2]{Kitaoka}, a unimodular form in three variables over
$\Z_\nu$, $\nu$ odd, whose matrix has determinant $d$ is equivalent
over $\Z_\nu$ to $2y_1y_2-d\,y_3^2$; so there is
$U\in \GL_3(\Z_\nu)$ with $q_3(Uy)=2y_1y_2-16y_3^2$. For
$t\in\Z_\nu$ take $y=(1,\tfrac t2,0)$, which lies in
$\Z_\nu^3$ because $2$ is a unit and outside $\nu\Z_\nu^3$
because its first coordinate is $1$; then $q_3(Uy)=t$ and
$Uy\notin\nu\Z_\nu^3$, so $\alpha:=(Uy,0,0)\in\Z_\nu^5
\setminus\nu\Z_\nu^5$ satisfies $q_5(\alpha)=t$.

\emph{(ii) $\nu=2$, $p\equiv3\pmod4$.} Put
\[
q_4(y_1,y_2,y_3,y_4):=2y_1^2+2y_1y_2+\tfrac{p+1}2y_2^2
+2y_3^2+2y_3y_4+\tfrac{p+1}2y_4^2 ,
\]
so that $q_5(x)=x_0^2+2\,q_4(x_1,x_3,x_2,x_4)$ identically.
The matrix $G_{q_4}$ is block diagonal with two blocks
$\bigl(\begin{smallmatrix}2&1\\1&(p+1)/2\end{smallmatrix}\bigr)$ of
determinant $p$, so $q_4$ is unimodular over $\Z_2$; and $(p+1)/2$ is
even because $p\equiv3\pmod4$, so $q_4$ takes only even values. By
\cite[Theorem~5.2.5]{Kitaoka}, a unimodular form in four variables over
$\Z_2$ taking only even values is equivalent over $\Z_2$
either to $2y_1y_2+2y_3y_4$ or to $2y_1y_2+2y_3^2+2y_3y_4+2y_4^2$. The
matrices of these two forms are
$\bigl(\begin{smallmatrix}0&1\\1&0\end{smallmatrix}\bigr)\oplus
\bigl(\begin{smallmatrix}0&1\\1&0\end{smallmatrix}\bigr)$ and
$\bigl(\begin{smallmatrix}0&1\\1&0\end{smallmatrix}\bigr)\oplus
\bigl(\begin{smallmatrix}2&1\\1&2\end{smallmatrix}\bigr)$, of determinants
$(-1)^2=1$ and $(-1)\cdot3=-3$. The second is impossible for $q_4$, since
$\det G_{q_4}=p^2$ and $p^2/(-3)\equiv5\pmod8$ is not the square of a
$2$-adic unit. Hence
$q_4(Uy)=2y_1y_2+2y_3y_4$ for some $U\in \GL_4(\Z_2)$, and the
substitution $x_0=y_0$, $(x_1,x_3,x_2,x_4)=Uy$ shows that $q_5$ is
equivalent over $\Z_2$ to
\[
\tilde q_5(y_0,y_1,y_2,y_3,y_4):=y_0^2+4y_1y_2+4y_3y_4 .
\]
Since the substitution maps vectors outside $2\Z_2^5$ to vectors
outside $2\Z_2^5$, it suffices to decide which $t$ are values of
$\tilde q_5$ at such vectors. If $t\equiv0\pmod4$ then
$t=\tilde q_5(0,1,t/4,0,0)$, and if $t\equiv1\pmod4$ then
$t=\tilde q_5(1,1,(t-1)/4,0,0)$, with $t/4$, $(t-1)/4\in\Z_2$; both
vectors have second coordinate $1$, so lie outside $2\Z_2^5$.
Conversely $\tilde q_5(y)\equiv y_0^2\pmod4$ for every $y\in\Z_2^5$,
and the square of a $2$-adic integer is $\equiv0$ or $1\pmod4$; so
$\tilde q_5$ takes no value $\equiv2,3\pmod4$ at all. Hence $q_5$ takes the
value $t$ at a vector outside $2\Z_2^5$ if and only if
$t\equiv0,1\pmod4$.

The last assertions follow, as $t>0$ and every square is $\equiv0,1\pmod4$.
\end{proof}

\begin{remark}
Part (i) uses only the ternary form $q_3=x_0^2+4x_1^2+4x_2^2$ obtained from
$q_5$ by setting $x_3=x_4=0$; it is unimodular at every odd prime,
including $p$. The hypothesis $p\equiv3\pmod4$ enters only in (ii), to make
$q_4$ even-valued; for $p\equiv1\pmod4$ the cited classification of unimodular
$\Z_2$-lattices does not apply, and we make no claim either way. This costs us
nothing: the standing assumption $p\equiv11\pmod{12}$ of this paper implies
$p\equiv3\pmod4$, and is also what makes $\Bp=(-1,-p\mid\Q)$ and the base curve
$j=1728$ available in Section~\ref{designing}. Of $t=\ell^{2k}$, only the fact that it is a square is used.
\end{remark}

\section{The counts of Ibukiyama--Katsura--Oort}\label{app:iko}

For the reader's convenience, we reproduce the explicit values of $a_G(p)$ of
Theorem~\ref{Ibuyikama-auto-count}, taken from \cite[Theorem 3.3]{ibukiyama1986supersingular}, since
Table~\ref{auto-experiment} compares against them directly. Let $p\geq7$ and write
$\varepsilon_d=1-\legendre{-d}{p}$ for $d=1,2,3$, so that $\varepsilon_d\in\{0,2\}$. Then
\footnotesize\begin{align*}
a_{1}(p) &= \frac{(p-1)(p^2-35p+346)}{2880}-\frac{\varepsilon_1}{32}-\frac{\varepsilon_2}{8}-\frac{\varepsilon_3}{9}
   -\begin{cases}0 & p\equiv1,2,3\!\!\pmod 5,\\ \tfrac15 & p\equiv4\!\!\pmod 5,\end{cases}\\
a_{C_2}(p) &= \frac{(p-1)(p-17)}{48}+\frac{\varepsilon_1}{8}+\frac{\varepsilon_2}{2}+\frac{\varepsilon_3}{2},\\
a_{C_2\times C_2}(p) &= \frac{p-1}{8}-\frac{\varepsilon_1}{8}-\frac{\varepsilon_2}{4}-\frac{\varepsilon_3}{2},\qquad
a_{S_3}(p) = \frac{p-1}{6}-\frac{\varepsilon_2}{2}-\frac{\varepsilon_3}{3},\\
a_{D_6}(p) &= \frac{\varepsilon_3}{2}=\begin{cases}0 & p\equiv1\!\!\pmod 6,\\ 1 & p\equiv5\!\!\pmod 6,\end{cases}\qquad
a_{S_4}(p) = \frac{\varepsilon_2}{2}=\begin{cases}0 & p\equiv1,3\!\!\pmod 8,\\ 1 & p\equiv5,7\!\!\pmod 8,\end{cases}\\
a_{C_5}(p) &= \begin{cases}0 & p\equiv1,2,3\!\!\pmod 5,\\ 1 & p\equiv4\!\!\pmod 5.\end{cases}
\end{align*}
\textbf{The counts $\mathbf{h}(p)$ and $\mathbf{H}(p)$:} The two counts used throughout are also classical. The class number $\mathbf{h}$ of a maximal order
of $\Bp$, equivalently the number of supersingular elliptic curves over $\overline{\F}_p$ up to
isomorphism, is
\footnotesize\begin{equation}\label{eq:hcount}
  \mathbf{h}=\frac{p-1}{12}+\frac14\left(1-\legendre{-1}{p}\right)+\frac13\left(1-\legendre{-3}{p}\right),
\end{equation}
due to Eichler, Deuring and Igusa; and the number $\mathbf{H}$ of principal polarizations on
$E\times E$ up to automorphism, equivalently the number of principally polarized superspecial
abelian surfaces up to isomorphism, is
\footnotesize\begin{equation}\label{eq:Hcount}
\begin{aligned}
  \mathbf{H}={}&\frac{(p-1)(p+12)(p+23)}{2880}
   +\frac{2p+13}{96}\left(1-\legendre{-1}{p}\right)
   +\frac{p+11}{36}\left(1-\legendre{-3}{p}\right)\\
   &+\frac18\left(1-\legendre{-2}{p}\right)
   +\frac1{12}\left(1-\legendre{-3}{p}\right)\left(1-\legendre{-1}{p}\right)
   +\begin{cases}0 & p\equiv1,2,3\!\!\pmod5,\\[2pt] \tfrac45 & p\equiv4\!\!\pmod5,\end{cases}
\end{aligned}
\end{equation}
by \cite[Theorem 3.3]{KatsuraOort1987}; see also \cite[\S2.2]{edagaurish}, from which we take this
form of both statements. At $p=251$ these give $\mathbf{h}=22$ and $\mathbf{H}=6281$, so
$\frac{\mathbf{h}(\mathbf{h}+1)}{2}=253$ and $\mathbf{H}-\frac{\mathbf{h}(\mathbf{h}+1)}{2}=6028$,
reproducing the first three count columns of Table~\ref{auto-experiment}. Since
$\mathbf{h}=\frac{p}{12}+O(1)$ and $\mathbf{H}=\frac{p^{3}}{2880}+O(p^{2})$, the proportion of
principal polarizations that are reducible is
\begin{equation}\label{eq:redfrac}
  \frac{\mathbf{h}(\mathbf{h}+1)/2}{\mathbf{H}}=\frac{10}{p}+O(p^{-2}) .
\end{equation}
\noindent
\textbf{The automorphism counts:} Here $D_6$ denotes the dihedral group of order $12$, written $D_{12}$ in
\cite{ibukiyama1986supersingular}. The reduced automorphism group determines the full
automorphism group of the curve: the correspondence $\mathrm{RA}(\C)\mapsto\Aut(\C)$ used in
Table~\ref{auto-experiment} is
$1\mapsto C_2, C_2\mapsto C_2\times C_2,  C_2\times C_2\mapsto D_4, 
S_3\mapsto D_6, D_6\mapsto C_3\rtimes D_4, S_4\mapsto\GL_2(\F_3), C_5\mapsto C_{10}$,
which is compatible with the involution counts of Table~\ref{Tab:Autr4}.

As a check, take $p=251$, so $\legendre{-1}{p}=-1$, $\legendre{-2}{p}=1$, $\legendre{-3}{p}=-1$,
that is $(\varepsilon_1,\varepsilon_2,\varepsilon_3)=(2,0,2)$, and $p\equiv1\pmod5$. The formulas give
$a_1=4736$, $a_{C_2}=1220$, $a_{C_2\times C_2}=30$, $a_{S_3}=41$, $a_{D_6}=1$, $a_{S_4}=0$ and
$a_{C_5}=0$, which sum to $6028=\mathbf{H}(251)-\frac{\mathbf{h}(\mathbf{h}+1)}{2}$ and reproduce the
corresponding row of Table~\ref{auto-experiment}.

\end{document}